\definecolor{ao}{rgb}{0.0, 0.5, 0.0}
\title{Formal superschemes over fields: Basic theory}
\author{Felipe Saenz}
\address{Felipe Saenz \\Instituto de Matemática Pura e Aplicada\\ Rio de Janeiro RJ\\ Brazil }
\email{felipe.leon@impa.br}
\author{Joel Torres del Valle}
\address{Joel Torres del Valle\\Instituto de Matemáticas\\ FCEyN\\Universidad de Antioquia\\50010 Medellín\\ Colombia}
\email{joel.torres@udea.edu.co}
\date{\today}
\theoremstyle{plain}
\newtheorem{theorem}{Theorem}[section]
\newtheorem{lemma}[theorem]{Lemma}
\newtheorem{corollary}[theorem]{Corollary}
\newtheorem{proposition}[theorem]{Proposition}
\theoremstyle{definition}
\newtheorem{definition}[theorem]{Definition}
\newtheorem{example}[theorem]{Example}
\newtheorem{question}[theorem]{Question}
\newtheorem{remark}[theorem]{Remark}
\def\proof{\par\pushQED{\qed}\normalfont\topsep6\p@ \trivlist \item[\hskip 20pt \itshape\proofname\@addpunct{.}\hskip\labelsep] \ignorespaces}
\newcommand\function[5]{%
  \begingroup
  \setlength\arraycolsep{0pt}
  #1\colon\begin{array}[t]{c >{{}}c<{{}} c}
             #2 & \to & #3 \\ #4 & \mapsto & #5 
          \end{array}%
  \endgroup}
\newcommand{\Ub}{\mathbb{U}}
\newcommand{\Us}{\mathsf{U}}
\newcommand{\Vs}{\mathsf{V}}
\newcommand{\Xs}{\mathsf{X}}
\newcommand{\Ys}{\mathsf{Y}}
\newcommand{\Ws}{\mathsf{W}}
\newcommand{\M}{\mathbf{M}}
\newcommand{\N}{\mathbf{N}}
\newcommand{\A}{\mathbf{A}}
\newcommand{\B}{\mathbf{B}}
\newcommand{\f}{\mathbf{f}}
\newcommand{\p}{\mathfrak{p}}
\renewcommand{\a}{\mathfrak{a}}
\newcommand{\m}{\mathfrak{m}}
\newcommand{\spec}{\,\mathrm{Spec}}
\newcommand{\Z}{\mathbb{Z}}
\newcommand{\cod}{\mathrm{corad}}
\newcommand{\ev}{_{\overline{0}}}
\newcommand{\od}{_{\overline{1}}}
\renewcommand{\qed}{\hfill$\square$}
\renewcommand{\to}
{\longrightarrow}
\renewcommand{\mapsto}{\longmapsto}
\renewcommand{\O}{\mathcal{O}}
\renewcommand{\hom}{\mathrm{Hom}}
\newcommand{\homm}{\underline{\mathrm{Hom}}}
\newcommand{\kdim}{\mathrm{Kdim}}
\newcommand{\ksdim}{\mathrm{Ksdim}}
\newcommand{\sdim}{\mathrm{sdim}}
\newcommand{\K}{\Bbbk}
\newcommand{\alg}{\mathsf{SAlg}_\K}
\newcommand{ \sets}{\mathsf{Sets}}
\newcommand{\SF}{\mathsf{SFunt}_\K}
\renewcommand{\ss}{\mathsf{SSch}_\K}
\newcommand{\bX}{\mathfrak{X}}
\newcommand{\bY}{\mathfrak{Y}}
\newcommand{\bW}{\mathfrak{W}}
\renewcommand{\sp}{\mathrm{SSp}}
\newcommand{\V}{\mathfrak{V}}
\newcommand{\D}{\mathfrak{D}}
\newcommand{\g}{\mathbf{g}}
\newcommand{\sspec}{\mathrm{SSpec}} 
\newcommand{\falg}{\mathsf{SAlgf}_\K}
\newcommand{\SfF}{\mathsf{SFFunt}_\K}
\newcommand{\spf}{\mathrm{SSpf}}
\newcommand{\salg}{\mathsf{PSAlg}_\K}
\keywords{Formal superscheme, super-coalgebra, super $\K$-functor}
\subjclass[2020]{16T15, 14A15, 14A22}
\begin{document}

\begin{abstract}  
 This paper develops the basic theory of formal schemes over fields in the supersymmetric setting. We introduce the notion of a formal superscheme and investigate some of its fundamental properties. Particular emphasis is placed on the study of morphisms between formal superschemes, for which we establish a faithfully flat descent theorem and a fiber dimension-type theorem. 
\end{abstract}

\maketitle

\section{Introduction}

The theory of formal schemes is well established in the commutative setting, especially under the Noetherianity assumption (see e.g., \cite{fujiwara2018foundations}, \cite{grothendieck1960elements} and \cite{hartshorne2013algebraic}). Beyond the classical approach based on locally ringed spaces, a functorial perspective reformulates schemes in terms of $\K$-functors, and formal schemes as inductive limits of direct systems of finite affine $\K$-schemes (see e.g., \cite{demazure1980introduction}, \cite{demazure2006lectures} and \cite{takeuchi1977formal}). This perspective is particularly powerful in the study of linear algebraic groups, representation and deformation theory (see, e.g., the classic texts \cite{demazure2006lectures}, \cite{jantzen2003representations}).

In the realm of supergeometry, only partial developments of a theory of formal superschemes have been proposed to date. More precisely, the notion of $\K$-functor has been extended to the ``super'' setting in works such as~\cite{masuoka2011quotient} and \cite{zubkov2009affine}, where several generalizations of classical results on affine group superschemes are achieved. More recently, \cite{takahashi2024quotients} introduced a formalization of the concept of formal superschemes and translated some results on quotients of formal supergroups into this framework. A related contribution can be found in~\cite{moosavian2019existence} and \cite{zubkov2024automorphism}, where various aspects of the theory are developed from the standpoint of locally ringed superspaces, under Noetherianity assumption.

As the authors of \cite{takahashi2024quotients} note, a systematic, field-based theory of formal superschemes (similar to 
\cite{takeuchi1977formal}) is still lacking. The present work aims to  contribute to address this gap by initiating the construction of such a theory. We cover topics such as flat and faithfully flat morphisms, locally algebraic superschemes, and superdimension theory.

\subsection{Our contribution} 

{Hereinafter we consider

\begin{align*}
    \K&:\text{a field with characteristic not equal to 2}\\
    \alg&:\text{the category of supercommutative $\K$-superalgebras,}\\
    \falg&:\text{the full subcategory of $\alg$ consisting of all finite-dimensional  $\K$-superalgebras, }\\
    \sets&:\text{the category of sets,}\\
    \mathsf{SCoalg}_\K&:\text{the category of super-cocommutative super-coalgebras over $\K$,}\\
    \salg&:\text{the category of profinite $\K$-superalgebras.}
\end{align*} 

A \textit{super $\K$-functor} is a covariant functor from $\alg$ to $\sets$. A \textit{formal super $\K$-functor} is a covariant functor from $\falg$ to $\sets$.

If $A\in\falg$, we define the formal super $\K$-functor $\spf\,A$ by 

\[
\spf\,A:B\mapsto\hom_{\falg}(A, B),\quad \text{for all }B\in\falg.
\]

\begin{definition}   A \textit{formal $\K$-superscheme} $\bX$ is a formal super $\K$-functor isomorphic to the inductive limit of a filtered inductive system of \textit{finite affine} $\K$-superschemes $(\bX_\lambda, u_{\lambda\mu})$; that is, each $\bX_\lambda$ is of the form $\spf\, A_\lambda$ with $A_\lambda \in \falg$, and the transition morphisms $u_{\lambda\mu} : \bX_\lambda \to \bX_\mu$ for $\lambda < \mu$ are closed immersions. 
\end{definition}

Given $\A\in\mathsf{SCoalg}_\K$, we define the functor $\sp^*\A:\falg\to\sets$ by 

\[
\sp^*\A :R \mapsto G_R(R\otimes \A),\quad \text{for all }R \in \falg.
\]

Here $G_R(R\otimes\A)$ denotes the \textit{group-like} elements in $(R\otimes \A)\ev$, that is, elements $u\in(R\otimes \A)\ev$ such that $\Delta(u)=u\otimes u$ and $\epsilon(u)=1$, with $\Delta$ and $\epsilon$ the coproduct and counit of the $R$-super-coalgebra $R\otimes \A$, respectively.

Now, for each $A\in\salg$ and $B\in\falg$, we consider $\hom_{\salg}(A, B)$ to consist of all continuous maps of $\K$-superalgebras $A\to B$, where we endow $B$ with the discrete topology. Then, we may define  $\mathrm{PSSpf}\,A$ as

\[
\mathrm{PSSpf}\,A:B\mapsto\hom_{\salg}(A, B), \quad\text{ for all }B\in\falg. 
\]

The results in the third section of this paper can be summarized as follows:

\begin{theorem}\label{the.1.2.intr}
    \ 

    \begin{itemize}
    \item[\rm i)] The assignment $A\mapsto\mathrm{PSSpf}\,A$ is an anti-equivalence of the category $\salg$ with the category of formal $\K$-superschemes. 
    \item[\rm ii)]  Let $A\in\falg$ and let $A^*\in\mathsf{SCoalg}_\K$ be its dual. The functors $\spf\,A$ and $\sp^*\, A^*$ are isomorphic.
   \item[\rm iii)] The assignment $\A\mapsto \sp^* \A$ is a covariant functor from $\mathsf{SCoalg}_\K$ to the category of $\K$-superfuntors. In addition, it induces an equivalence between  $\mathsf{SCoalg}_\K$ and the category of $\K$-formal superschemes.  
   \item[\rm iv)]  A formal $\K$-functor is a formal $\K$-superscheme if it commutes with finite projective limits. 
\end{itemize} 
\end{theorem}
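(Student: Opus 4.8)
The plan is to read the hypothesis that $F\colon\falg\to\sets$ \emph{commutes with finite projective limits} as left-exactness (preservation of finite inverse limits) and to reconstruct from $F$ a profinite superalgebra representing it, so as to conclude via part~(i). First I would dispose of the converse inclusion: if $\bX\cong\varinjlim_\lambda\spf A_\lambda$ is a formal superscheme, then each $\spf A_\lambda=\hom_\falg(A_\lambda,-)$ is corepresentable and hence preserves every limit existing in $\falg$; since $\falg$ admits finite limits computed as in $\alg$ (a finite product of finite-dimensional supercommutative superalgebras is again such, and an equalizer of two homomorphisms is a finite-dimensional sub-superalgebra), and since filtered colimits commute with finite limits in $\sets$, the interchange $\bX(\varprojlim_j D_j)=\varinjlim_\lambda\varprojlim_j\hom_\falg(A_\lambda,D_j)\cong\varprojlim_j\varinjlim_\lambda\hom_\falg(A_\lambda,D_j)=\varprojlim_j\bX(D_j)$ shows $\bX$ is left exact.

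For the substantive direction, suppose $F$ is left exact. I would form the (essentially small) category of elements $\mathcal{E}$, with objects the pairs $(A,x)$, $A\in\falg$, $x\in F(A)$, and with a morphism $(A,x)\to(A',x')$ given by a homomorphism $\phi\colon A'\to A$ satisfying $F(\phi)(x')=x$; the density theorem then presents $F$ canonically as the colimit over $\mathcal{E}$ of the corepresentables, $F\cong\varinjlim_{(A,x)\in\mathcal{E}}\spf A$. The key step is to show that left-exactness forces $\mathcal{E}$ to be filtered. Non-emptiness comes from $F$ preserving the terminal object of $\falg$. Given $(A,x)$ and $(A',x')$, the product $A\times A'$ together with the element $(x,x')\in F(A)\times F(A')\cong F(A\times A')$ (here $F$ preserves products) provides a common object receiving morphisms from both via the projections. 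Given two parallel arrows $\phi_1,\phi_2\colon A'\to A$ representing $(A,x)\rightrightarrows(A',x')$, the equalizer $C\hookrightarrow A'$ of $\phi_1,\phi_2$ carries, since $F$ preserves equalizers and $F(\phi_1)(x')=x=F(\phi_2)(x')$, the element $x'\in F(C)$; the resulting arrow $(A',x')\to(C,x')$ coequalizes the pair. Thus $\mathcal{E}$ is filtered and $F$ is a filtered colimit of finite affine superschemes.

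It remains to upgrade this presentation to one with closed-immersion transitions, which I would do by representing $F$ by a profinite superalgebra. Set $A_F:=\varprojlim_{(A,x)\in\mathcal{E}^{\mathrm{op}}}A$, a cofiltered limit of finite-dimensional superalgebras. For every $B\in\falg$, a continuous homomorphism $A_F\to B$ into the discrete finite-dimensional $B$ factors through one of the structural projections $A_F\to A$, whence $\mathrm{PSSpf}\,A_F(B)=\varinjlim_{(A,x)\in\mathcal{E}}\hom_\falg(A,B)=F(B)$, naturally in $B$; that is, $F\cong\mathrm{PSSpf}\,A_F$. By part~(i) this identifies $F$ with a formal superscheme.

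The main obstacle I anticipate is exactly this last upgrade: verifying that $A_F$ genuinely lies in $\salg$, i.e.\ that the cofiltered system $\{A\}_{(A,x)\in\mathcal{E}}$ can be replaced by a profinite presentation with \emph{surjective} transition maps (equivalently, that the corresponding colimit system has closed immersions as transitions). This is a Mittag--Leffler argument: since the $A$ are finite-dimensional, the universal images $\bigcap_{(A',x')\to(A,x)}\im(A'\to A)$ stabilize and form sub-superalgebras on which the transition maps become surjective, without changing the inverse limit; the only care needed is that these images respect the $\Z/2$-grading and the superalgebra structure, which they do. The remaining points---that $\falg$ has finite limits, the density presentation, and the factorization of continuous homomorphisms through a finite stage---are routine, so the filteredness of $\mathcal{E}$ together with this profinite refinement carry the proof.
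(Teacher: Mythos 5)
Your argument addresses only item iv) of the four-part statement: items ii) and iii) are never touched, and item i) --- the anti-equivalence $A\mapsto\mathrm{PSSpf}\,A$ --- is invoked as a black box in your final step rather than proved. Since the theorem is the conjunction of all four assertions, this is a genuine gap in coverage and not a stylistic choice: the paper establishes ii) by the explicit identification of $\Z_2$-graded linear maps $A\to R$ with group-like elements of $(R\otimes A^*)\ev$ (an identification that works precisely because $R$ is finite-dimensional), establishes iii) by superizing Takeuchi's Theorem~1.1, and these are the parts actually used in the rest of the paper; i) and iv) are the parts the paper treats most lightly.

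That said, the portion you do prove is essentially sound, and is in fact more self-contained than the paper's own treatment of iv), which is a remark deferring to Demazure. Your route --- filteredness of the category of elements from left-exactness (using that $\falg$ has finite products and equalizers), hence pro-representability, followed by a Mittag--Leffler refinement of the resulting pro-object of finite-dimensional superalgebras to force surjective transition maps, i.e.\ closed immersions --- is the classical Grothendieck--Demazure argument, and it transfers to the super setting without incident since images and intersections of $\Z_2$-graded subalgebras are again graded subalgebras. The paper's implicit mechanism for obtaining the closed-immersion presentation is dual: via iii), $\O_*(\bX)$ is a super-coalgebra, and the fundamental theorem of coalgebras (every super-coalgebra is the filtered union of its finite-dimensional sub-super-coalgebras) yields injective transition maps of coalgebras, hence surjective dual algebra maps, with no Mittag--Leffler step needed. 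Either mechanism is acceptable; but to stand as a proof of the stated theorem your text would need, at minimum, a proof of i) (factorization of continuous morphisms out of a profinite superalgebra through a discrete finite-dimensional quotient, plus full faithfulness of $\mathrm{PSSpf}$) together with the arguments for ii) and iii).
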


It is worth noting that our notion of formal superscheme differs slightly from the one considered in~\cite{takahashi2024quotients}. In their approach, the super $\K$-functor $\sp^* \A$ is defined over the category $\alg$, whereas we restrict ourselves to the full subcategory $\falg$. This distinction does not hinder the applicability of Takeuchi's arguments \cite{takeuchi1977formal}, but it proves useful when ``translating" certain techniques from \cite{demazure2006lectures} into the language of supergeometry (see \Cref{sect.3.4}).

As previously mentioned, \cite{zubkov2024automorphism} extended the notion of a formal scheme in the sense of Hartshorne \cite{hartshorne2013algebraic} to the supergeometric setting. A natural question arises as to how this approach relates to the one adopted in the present work. Let $\bX=\sspec\,A$ be a Noetherian affine superscheme, and let $\bY=\sspec\,A/\a \subseteq \bX$ be a closed subsuperscheme given by a sheaf of superideals $\mathfrak{I} \subseteq \O_\bX$ corresponding to the superideal $\a$ of $A$. Equipping $A$ with the $\a$-adic topology, the formal completion of $\bX$ along $\bY $ as given in \cite[\S4.1]{zubkov2024automorphism} gives 

$$\mathrm{PSSpf}(A)(R) \simeq\varinjlim_n\,\mathrm{PSSpf}(A/\a^n)(R),\quad R\in\falg$$ 
in agreement with \Cref{the.1.2.intr} i).}   Furthermore, fiber products behave well, and flat morphisms are defined analogously as in the case of formal schemes and retain similar properties (see \Cref{sec.morphisms}). In particular, we supersize the theorem of faithfully flat descent (\cite[Theorem 2.4]{takeuchi1977formal}):

\begin{theorem} If $\f:\bX\rightarrow\bY$ is a faithful flat map of formal superschemes, then the diagram \[
\xymatrix{\bX\times_\bY \bX\ar@<-1ex>[rr]_{\pi_2}\ar@<+1ex>[rr]^{\pi_1}&& \bX\ar[rr]^{\f} &&\bY}
\] is exact in the category of formal superschemes.  
\end{theorem}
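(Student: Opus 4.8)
The plan is to transport the assertion across the anti-equivalence of \Cref{the.1.2.intr}~i) and reduce it to the exactness of an Amitsur-type complex of profinite superalgebras. Write $\bX=\mathrm{PSSpf}\,A$ and $\bY=\mathrm{PSSpf}\,B$ with $A,B\in\salg$, so that $\f$ corresponds to a continuous morphism $\phi\colon B\to A$ in $\salg$. Since a contravariant equivalence carries fibre products of formal superschemes to completed tensor products (the pushout $A\amalg_B A=A\,\widehat{\otimes}_B\,A$ in $\salg$) and coequalizer forks to equalizer forks, the diagram $\bX\times_\bY\bX\rightrightarrows\bX\to\bY$ is exact precisely when
\[
B\xrightarrow{\ \phi\ }A\ \underset{\iota_2}{\overset{\iota_1}{\rightrightarrows}}\ A\,\widehat{\otimes}_B\,A
\]
is an equalizer, where $\iota_1(a)=a\,\widehat{\otimes}\,1$ and $\iota_2(a)=1\,\widehat{\otimes}\,a$; here I first record that $\bX\times_\bY\bX\simeq\mathrm{PSSpf}(A\,\widehat{\otimes}_B\,A)$ and that $\pi_1,\pi_2$ dualize to $\iota_1,\iota_2$. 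Under this dictionary the hypothesis that $\f$ be faithfully flat becomes the statement that $A$ is a faithfully flat $B$-supermodule in the profinite sense of \Cref{sec.morphisms}.

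Granting this reduction, I would establish exactness of the displayed fork by the classical faithfully flat descent argument. Because $A$ is faithfully flat over $B$, a complex of $B$-supermodules is exact exactly when it remains exact after applying $-\,\widehat{\otimes}_B\,A$; hence it suffices to prove exactness of the base-changed sequence $A\to A\,\widehat{\otimes}_B\,A\rightrightarrows A\,\widehat{\otimes}_B\,A\,\widehat{\otimes}_B\,A$. This base-changed complex is contractible: the multiplication-type maps $s_0(a\,\widehat{\otimes}\,a')=aa'$ and $s_1(a\,\widehat{\otimes}\,a'\,\widehat{\otimes}\,a'')=aa'\,\widehat{\otimes}\,a''$ furnish a contracting homotopy, so the base-changed sequence is split exact, and faithful flatness descends this back to the original one. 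The only point at which the super structure enters is the verification of the homotopy identities $sd+ds=\mathrm{id}$: since every structure map in sight (the unit, the multiplications, and the insertions of $1$) is \emph{even}, the Koszul signs incurred when commuting factors past the inserted unit all vanish, and the classical identities hold verbatim.

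The step I expect to be the main obstacle is the topological bookkeeping around the completed tensor product $\widehat{\otimes}_B$ and the inverse-limit structure of objects of $\salg$. Unlike the discrete case, faithful flatness and the contracting homotopy must be shown to survive completion: writing $A=\varprojlim_i A_i$ and $B=\varprojlim_j B_j$ as filtered inverse limits of finite-dimensional superalgebras, I would prove exactness at each finite level $A_i\,\widehat{\otimes}_{B_j}\,A_i$ and then pass to the limit, which is legitimate because the relevant transition systems are surjective (Mittag--Leffler). Equivalently, and perhaps more cleanly, one can dualize the whole fork through \Cref{the.1.2.intr}~iii) into the category $\mathsf{SCoalg}_\K$, where every object is the filtered union of its finite-dimensional subcoalgebras; there the limits above become \emph{colimits}, which are automatically exact, so the completion subtlety dissolves and one is left only with the cotensor bookkeeping of the comodule version of descent. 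I would carry out the algebra-side argument for transparency and invoke the coalgebra picture to discharge the limit-exactness difficulty.
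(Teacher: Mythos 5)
Your fallback formulation---dualizing the whole fork into $\mathsf{SCoalg}_\K$ and running the cotensor version of the descent complex---is precisely the paper's proof: for the coalgebra map $\psi:\A\to\B$ induced by $\f$, the authors form the complex $0 \leftarrow M \leftarrow M\,\Box_\B\,\A \leftarrow M\,\Box_\B\,\A\,\Box_\B\,\A \leftarrow \cdots$, observe that it becomes contractible after applying $-\,\Box_\B\,\A$ (the contracting homotopy being the comultiplication-side dual of your multiplication maps $s_0,s_1$), conclude acyclicity from faithful exactness of $-\,\Box_\B\,\A$, and specialize to $M=\B$ before applying $\sp^*$. Your observation that the evenness of all structure maps kills the Koszul signs in the homotopy identities is also the (implicit) reason the classical computation carries over. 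So if you commit to the coalgebra side, you have the right argument.

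Your preferred primary route through $\salg$ and completed tensor products, however, has gaps that should not be waved away. First, the paper defines (faithful) flatness of $\f$ exclusively via the cotensor functor $\f^*=-\,\square_{\O_*(\bY)}\,\O_*(\bX)$ on supercomodules; your assertion that this ``becomes the statement that $A$ is a faithfully flat $B$-supermodule in the profinite sense'' is itself a comparison theorem that is nowhere available and would require proof---completed tensor products over profinite superalgebras do not in general commute with kernels, so even formulating faithful exactness of $-\,\widehat{\otimes}_B\,A$ needs care. Second, the finite-level reduction is not legitimate as stated: faithful flatness of $A$ over $B$ does not pass to the finite-dimensional quotients $A_i$ over $B_j$, so you cannot ``prove exactness at each finite level'' and then take limits, Mittag--Leffler or not. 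These are exactly the difficulties the coalgebra formulation avoids, since there every object is the filtered union of its finite-dimensional subcoalgebras and the cotensor product is an honest kernel of maps of discrete vector spaces. I would drop the algebra-side presentation rather than carry it ``for transparency'' while invoking the coalgebra picture only to patch the limits; as written, the load-bearing part of your argument is the one you relegate to a remark.
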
 

Following \cite{takeuchi1977formal}, we introduce finiteness conditions on formal superschemes and their morphisms, including the notion of locally algebraic formal superschemes (see \Cref{sec.loc.alg}). We show that most properties of the usual case continue to hold in this setting. 
We also define the local superdimension of $\bX$ at a  point $x$, as well as the global superdimension of $\bX$, based on the theory of Krull superdimension in \cite{zubkov2022dimension}. We establish the following fiber-dimension type theorem.

\begin{theorem}
Let $\mathbf{f}:\bX\to\bY$ be a map of locally algebraic formal superschemes, $x\in|\bX|$ and $y=\mathbf{f}(x)$. Then 

\[
\sdim_{x, \overline{0}}\,\bX\leq\sdim_{y, \overline{0}}\,\bY+\sdim_{x, \overline{0}}\,\mathbf{f}^{-1}(y).
\]

If $\mathbf{f}$ is flat at $x$, then the equality holds. If furthermore $\O_y^*$ is regular and both $\O_x^*$ and $\O_y^*$ contain a field, then 

\[
\sdim_{x, \overline{1}}\,\bX\geq\sdim_{y, \overline{1}}\,\bY+\sdim_{x, \overline{1}}\,\mathbf{f}^{-1}(y).
\]  
\end{theorem}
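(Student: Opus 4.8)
The plan is to reduce the statement to local super-commutative algebra and then handle the even and odd parts by completely different arguments. First I would invoke the anti-equivalence of \Cref{the.1.2.intr} i) together with the locally algebraic hypothesis to replace $\f$, near $x$, by the induced local homomorphism $\varphi:\O_y^*\to\O_x^*$ of complete Noetherian local $\K$-superalgebras, with $\m_y\subset\O_y^*$ the maximal superideal. By construction the local superalgebra of the fibre $\f^{-1}(y)$ at $x$ is $\O_x^*/\m_y\O_x^*$, and the quantities in the statement are the even and odd Krull superdimensions of these local superalgebras: $\sdim_{x,\overline{0}}\bX=\sdim\ev\O_x^*$, $\sdim_{x,\overline{1}}\bX=\sdim\od\O_x^*$, and likewise $\sdim\od\f^{-1}(y)=\sdim\od(\O_x^*/\m_y\O_x^*)$. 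From \cite{zubkov2022dimension} I would record the two facts I need: $\sdim\ev$ equals the ordinary Krull dimension of the even part (the odd elements being nilpotent, $\mathrm{Spec}$ sees only $(\O_x^*)\ev$), while $\sdim\od$ is the $(1+t)$-order of the Hilbert--Samuel superseries and is bounded below by the length of any odd regular sequence.

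For the even inequality, and for equality under flatness, the plan is to transfer everything to the even parts and quote the classical local fibre-dimension theorem. The homomorphism $\varphi\ev:(\O_y^*)\ev\to(\O_x^*)\ev$ is local between Noetherian local rings, and up to nilpotents $(\O_x^*/\m_y\O_x^*)\ev=(\O_x^*)\ev/\m_{y,\overline{0}}(\O_x^*)\ev$ (the extra even generators of $\m_y\O_x^*$ are products of odd elements, hence nilpotent). The classical bound $\dim(\O_x^*)\ev\le\dim(\O_y^*)\ev+\dim\big((\O_x^*)\ev/\m_{y,\overline{0}}(\O_x^*)\ev\big)$ then gives the asserted even inequality, and its equality case applies once I check that flatness of $\varphi$ descends to $\varphi\ev$, which holds because $(\O_x^*)\ev$ is a module direct summand of $\O_x^*$.

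The hard part is the odd inequality, which points the opposite way and is governed by no classical theorem; here the plan is constructive. Using that $\O_x^*$ and $\O_y^*$ are equicharacteristic I would apply the Cohen structure theorem to fix compatible coefficient fields, and using that $\O_y^*$ is regular I would present its odd directions by an honest odd regular sequence $\xi_1,\dots,\xi_q$ with $q=\sdim\od\O_y^*$. Simultaneously I would choose a maximal odd regular sequence $\bar\eta_1,\dots,\bar\eta_r$ in the fibre $\O_x^*/\m_y\O_x^*$, with $r=\sdim\od\f^{-1}(y)$, and lift it to odd elements $\eta_1,\dots,\eta_r\in\O_x^*$. The claim to prove is that $\varphi(\xi_1),\dots,\varphi(\xi_q),\eta_1,\dots,\eta_r$ is again an odd regular sequence in $\O_x^*$; since $\sdim\od$ dominates the length of any odd regular sequence, this yields $\sdim\od\O_x^*\ge q+r=\sdim\od\O_y^*+\sdim\od\f^{-1}(y)$.

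The main obstacle is exactly this concatenation claim, which is a super analogue of the statement that over a regular base a regular sequence pulls back, along a flat map, to a regular sequence, and that fibrewise regular sequences lift. I expect the proof to proceed in two steps: first, flatness of $\varphi$ (together with regularity of $\O_y^*$, so that $\O_x^*$ is torsion-free over it in the relevant sense) keeps $\varphi(\xi_1),\dots,\varphi(\xi_q)$ regular and shows $\O_x^*/(\varphi(\xi_i))$ stays flat over $\O_y^*/(\xi_i)$; second, the reduction modulo the $\varphi(\xi_i)$ identifies the remaining odd directions with those of the fibre, so that the lifts $\eta_j$ remain regular. The delicate point throughout is that odd elements square to zero, so ``regular sequence'' must be read in the super sense and the usual exchange arguments require the coefficient-field splitting from Cohen's theorem; as a consistency check I would verify that under flatness the Hilbert--Samuel superseries of $\O_x^*$ is the product of those of $\O_y^*$ and of the fibre, which simultaneously reproves the even equality (the orders of the pole at $t=1$ add) and yields the odd inequality (the $(1+t)$-order being only superadditive once the numerator is taken into account).
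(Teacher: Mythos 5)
Your reduction to a local morphism $\phi:\O_y^*\to\O_x^*$ of Noetherian local superalgebras, the identification of the fibre's dual with $\O_x^*/\m_y\O_x^*$, and the use of the classical local fibre-dimension theorem for the even inequality are exactly what the paper does (it quotes \cite[I, \S3, Corollary 5.12]{demazure1980introduction}, and for the equality under flatness \cite[Corollary 1.7.9]{takeuchi1974tangent}). One caveat on your even equality: the claim that flatness of $\phi$ descends to $\phi\ev$ ``because $(\O_x^*)\ev$ is a direct summand'' is not a valid argument in general, since a supercommutative superring need not be flat over its even part (e.g.\ $\K[t,\xi]/(t\xi)$ over $\K[t]$); some substitute, such as the coalgebra-level statement the paper cites, is needed there.

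The genuine gap is in the odd inequality. What you propose to prove by hand --- that for a flat local morphism with regular equicharacteristic base one has $\ksdim\od\,S\geq\ksdim\od\,R+\ksdim\od(S/S\m)$ --- is precisely \cite[Theorem 7.2]{zubkov2022dimension}, which the paper simply invokes. Your sketch leaves its core, the concatenation claim for odd sequences, as an expectation rather than a proof, and it rests on identifying $\ksdim\od$ with the maximal length of an ``odd regular sequence''; but the definition actually in force here (Section 2) is via systems of odd parameters, i.e.\ annihilator conditions $\kdim(A\ev)=\kdim(A\ev/\mathrm{Ann}_{A\ev}(a^I))$, and odd elements are never regular in the naive sense since they square to zero. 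Bridging that discrepancy, and proving that pullbacks and lifts of odd parameter systems concatenate to a system of odd parameters, is the entire content of the cited theorem; as written your argument does not establish the displayed odd inequality. Either cite \cite[Theorem 7.2]{zubkov2022dimension} directly, as the paper does, or supply a complete proof of the concatenation step in the language of odd parameter systems.
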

It remains unclear when equality holds in the odd component (cf.~\cite[Theorem 7.2]{zubkov2022dimension}).

\subsection{Organization} 

The remaining of this paper is organized as follows.

In \Cref{sec.prel}, we present preliminaries on super(co)algebras. Our primary focus is on delivering superized versions of functional results for the study of formal schemes, including coradical filtrations, the wedge product, the Krull dimension, and the cofree coalgebra. Most of the results in this section follow the same as their non-super version, so several details are omitted.

In \Cref{sec.def.formal.super}, we introduce the concept of super $\K$-functor and formal $\K$-superscheme. We show they can approached from four different, but equivalent, perspectives, focusing on the super-coalgebraic point of view.

\Cref{sec.morphisms} is devoted to the study of morphisms between formal superschemes. We focus in particular on (faithfully) flat morphisms, and we prove a version of the faithfully flat descent theorem adapted to the super setting.

In \Cref{sec.loc.alg}, we explore the class of locally algebraic formal superschemes. We define and study the notion of (local) superdimension.

In \Cref{sec.forthcoming}, we conclude with a (brief) discussion of further research directions motivated by the theory developed in this work.  


\section{Super(co)algebras and super(co)modules}\label{sec.prel}

This section provides an overview of the fundamental concepts of superalgebras and super-coalgebras, which play a pivotal role in the development of this work. We present ``superized" versions of several key results in \cite{sweedler1969hopf}, extending classical theorems to the super setting.

Throughout this work, unless explicitly stated otherwise, $\K$ denotes a field with characteristic not equal to 2. 


\subsection{Supercommutative superrings} 

A \( \mathbb{Z}_2 \)-graded ring \( A = A\ev \oplus A\od \) is called a \emph{superring}. The set of homogeneous elements  of $A$ is \( A^h := A\ev \cup A\od \), and each \( x \in A^h \) is said to be \emph{homogeneous}. A nonzero homogeneous element \( x \) is called \emph{even} if \( x \in A\ev \), and \emph{odd} if \( x \in A\od \). Its \emph{parity} is \( |x| = \overline{i} \) if \( x \in A_{\overline{i}} \).

All superrings considered here are \emph{supercommutative}: this means \( A\ev \) is central in \( A \), i.e., $A\ev$ is contained in the center of $A$; and \( x^2 = 0 \) for all \( x \in A\od \). In particular, \( xy = (-1)^{|x||y|} yx \) for all \( x, y \in A^h \). If \( 2 \) is not a zero-divisor in \( A \), this condition is equivalent to supercommutativity. In particular, \( A\ev \) is a commutative ring.

Given two superrings \( A \) and \( B \), a \textit{morphism} \( \phi : A \to B \) is a \( \mathbb{Z}_2 \)-graded ring homomorphism. It is an \emph{isomorphism} if it is bijective, in which case we write \( A \simeq B \).

A \emph{superideal} of \( A \) is an ideal \( \mathfrak{a} \subset A \) such that \( \mathfrak{a} = (\mathfrak{a} \cap A\ev) \oplus (\mathfrak{a} \cap A\od) \). A superideal \( \mathfrak{p} \subset A \) is \emph{prime} (resp. \emph{maximal}) if \( A / \mathfrak{p} \) is an integral domain (resp. a field). Any such ideal is of the form \( \mathfrak{p} \oplus A\od \), where \( \mathfrak{p} \subset A\ev \) is a prime (resp. maximal) ideal of $A\ev$.

The \emph{canonical superideal} of \( A \) is \( \mathfrak{J}_A := A \cdot A\od = A\od^2 \oplus A\od \). The \emph{bosonic reduction} of \( A \) is the commutative ring \( \overline{A} := A / \mathfrak{J}_A \simeq A\ev / A\od^2 \).

For a more in-depth exploration of the fundamental theory of superrings, refer to \cite{westra2009superrings}.

\subsection{Krull superdimension of Noetherian superrings}  

We recall the definition of the Krull superdimension of a superring. For further details, see \cite{masuoka2020notion} and \cite{zubkov2022dimension}.

Let \( A \) be a Noetherian superring  (that is, it has the ACC on superideals) with \( \mathrm{Kdim}(A\ev) < \infty \). Let \( a_1, \ldots, a_s \in A\od \). For any subset \( I \subseteq \{1, \ldots, s\} \), write \( a^I := a_{i_1} \cdots a_{i_n} \), where \( I = \{i_1, \ldots, i_n\} \). The annihilator of \( a^I \) in \( A\ev \) is defined by
\[
\mathrm{Ann}_{A\ev}(a^I) := \{r \in A\ev \mid r a^I = 0\}.
\]
We say that \( a_{i_1}, \ldots, a_{i_n} \) form a \emph{system of odd parameters of length \( n \)} if
\[
\mathrm{Kdim}(A\ev) = \mathrm{Kdim}(A\ev / \mathrm{Ann}_{A\ev}(a^I)).
\]

The largest integer \( m \) for which there exists a system of odd parameters of length \( m \) is called the \emph{odd Krull superdimension} of \( A \), denoted \( \mathrm{Ksdim}\od(A) = m \). The \emph{even Krull superdimension} is defined as the Krull dimension of \( A\ev \), denoted \( \mathrm{Ksdim}\ev(A) \). The \emph{Krull superdimension} of \( A \) is the pair 
\[
\ksdim(A) := \mathrm{Ksdim}\ev(A) \mid \mathrm{Ksdim}\od(A).
\]

By \cite[Proposition~4.1]{masuoka2020notion}, a system of odd parameters can always be chosen among a set of generators of the \( A\ev \)-module \( A\od \). Since \( A \) is Noetherian, it follows from \cite[Lemma~1.4]{masuoka2020notion} that \( A\od \) is finitely generated as an \( A\ev \)-module. Consequently, the odd Krull superdimension is always finite.

\subsection{Super vector spaces} 

A \emph{super \( \K \)-vector space} is a \( \mathbb{Z}_2 \)-graded \( \K \)-vector space. We denote by $\mathsf{SVec}_\K$ the category of super $\K$-vector spaces and $\Z_2$-graded $\K$-linear maps.  Let $\Vs$ be a super $\K$-vector space, its dual $\Vs^*$ consists of the $\K$-linear maps $\Vs\to \K$. Let $S\subseteq \Vs$ be a subset. Define $$S^\perp:=\lbrace f\in \Vs^* \mid f(s)=0,  \text{ for all } s\in S\rbrace.$$ If \( \Vs \) and \(\Ws \) are super \( \K \)-vector spaces, there is a \emph{twisting map} 
\[
c_{\Vs, \Ws } : \Vs \otimes \Ws \to \Ws  \otimes \Vs,\ v \otimes w \mapsto (-1)^{|v||w|} w \otimes v.
\]

Let $\Vs\in\mathsf{SVec}_\K$. The \textit{super-dimension} of $\Vs$, denoted by $\sdim_\K(\Vs)$, is the pair $\dim_\K \Vs\ev\mid\dim_\K \Vs\od,$ where $\dim_\K(-)$ denotes the usual dimension of $\K$-vector space. If $$\dim_\K \Vs\ev+\dim_\K \Vs\od<\infty,$$ we say that $\Vs$ is \textit{finite-dimensional}. 

\subsection{Super-coalgebras} 

A \emph{super-coalgebra} over \( \K \) is a super \( \K \)-vector space \( \A \) equipped with morphisms of super $\K$-vector spaces $\Delta_\A: \A \to \A \otimes \A$ (the coproduct) and $\epsilon_\A : \A \to \K$ (the counit) such that \[ ( Id_\A \otimes_\K \epsilon_\A )\circ \Delta_\A = (\epsilon_\A \otimes_\K  Id_\A) \circ \Delta_\A =  Id_\A,\] where we are identifying \( \K \otimes_\K \A \cong \A \otimes_\K \K \cong \A \).

If there is no risk of confusion, we omit the subscript $\A$ in $\Delta_\A$ and $\epsilon_\A$; and the symbol $\otimes$ denotes the tensor product over $\K$. Furthermore, following \cite{sweedler1969hopf}, the coproduct is often written as
\[
\Delta(a) = \sum a_{(1)} \otimes a_{(2)}, 
\]

\noindent  or simply $\Delta(a) = a_{(1)} \otimes a_{(2)}$. We say that \( \A \) is \emph{coassociative} if \[ (\Delta \otimes  Id_\A) \circ \Delta = ( Id_\A \otimes \Delta) \circ \Delta.\] We always assume coassociativity. A super-coalgebra \( \A \) is called \emph{super-cocommutative} if \( \Delta = c_{\A, \A} \circ \Delta \).

Let \( (\A, \Delta_\A, \epsilon_\A) \) and \( (\B, \Delta_\B, \epsilon_\B) \) be super-coalgebras over \( \K \). A morphism of super $\K$-vector spaces \( \phi : \A \to \B \) is a \emph{morphism of super-coalgebras} if \[
\Delta_\B \circ \phi = (\phi \otimes \phi) \circ \Delta_\A\quad \text{  and }\quad  \epsilon_\B \circ \phi = \epsilon_\A.
\] 

\subsection{Sub super-coalgebras and the wedge product}\label{Sub.sec.2.5}

Let $\B$ be a sub super $\K$-vector space of $\A$. We say that $\B$ is a \textit{sub-super-coalgebra} of $\A$ if $\Delta(\B) \subseteq \B \otimes \B$. We say that $\B$ is a \textit{coideal} if $$\Delta(\B)\subseteq \B\otimes \A+\A\otimes \B \quad \text{and} \quad \epsilon(\B) = 0.$$
If $\B$ is a coideal, the quotient $\A/\B$ is a supercolagebra. It follows that  $\A\od$ is a coideal, and  $\A\ev \simeq \A / \A\od$. Then,  $\A\ev$ carries a coalgebra structure with coproduct $$\Delta_{\A\ev}(a\mod \A\od)=\Delta_A(a)\mod(\A\otimes \A\od+\A\od\otimes \A)$$ and counit $\epsilon_{\A\ev} (a\mod \A\od)=\epsilon_\A(a)$ (see \cite[Lemma 8.2.5 and Proposition 8.2.6]{westra2009superrings}).

Let $\A$ be a super-coalgebra and let $\Xs , \Ys $ be sub super $\K$-vector spaces of $\A$. The kernel of the composition \[
\xymatrix{\A\ar[r]^{\Delta}&\A\otimes \A\ar[r]& \A/\Xs \otimes \A/\Ys }
\]
is called the \textit{wedge} of $\Xs $ and $\Ys $ and is denoted by $\Xs \sqcap \Ys $. (The reader can check that the basic properties of ``$\sqcap$'' stated in  \cite[Proposition 9.0.0]{sweedler1969hopf} generalizes to the super case. Furthermore, note that we use the notation ``$\sqcap$'' instead of ``$\wedge$'' here, in order to avoid confusion with the exterior product.)  
 
We say that $\A$ is \textit{locally finite} if for any finite-dimensional sub super $\K$-vector spaces $\Xs $ and $\Ys $ of $\A$, the wedge $\Xs \sqcap \Ys $ is finite-dimensional.

The following proposition follows immediately from the definition:

\begin{proposition}
\ 

    \begin{enumerate}
        \item[\rm i)] Any sub-super-coalgebra of a locally finite super-coalgebra is locally finite.
        \item[\rm ii)]  Direct sums of locally finite super-coalgebras are locally finite. 
    \end{enumerate}    
\end{proposition}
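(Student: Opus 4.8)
The plan is to prove both statements directly from the definitions of local finiteness and the wedge product $\sqcap$, unwinding the kernel description of $\Xs \sqcap \Ys$.

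For part i), suppose $\B$ is a sub-super-coalgebra of a locally finite super-coalgebra $\A$. Let $\Xs, \Ys$ be finite-dimensional sub super $\K$-vector spaces of $\B$. The key observation is that the wedge product is computed via the coproduct, and since $\B$ is a sub-super-coalgebra we have $\Delta_\B = \Delta_\A|_\B$ with $\Delta_\B(\B) \subseteq \B \otimes \B$. First I would verify that the wedge $\Xs \sqcap_\B \Ys$ computed inside $\B$ coincides with (or is contained in) the wedge $\Xs \sqcap_\A \Ys$ computed inside $\A$. Concretely, $\Xs \sqcap_\A \Ys$ is the kernel of $\A \xrightarrow{\Delta} \A \otimes \A \to \A/\Xs \otimes \A/\Ys$, while $\Xs \sqcap_\B \Ys$ is the kernel of the analogous map for $\B$. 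The inclusion $\B \hookrightarrow \A$ induces a commutative square relating the two maps, from which one reads off $\Xs \sqcap_\B \Ys = \B \cap (\Xs \sqcap_\A \Ys) \subseteq \Xs \sqcap_\A \Ys$. Since $\A$ is locally finite, $\Xs \sqcap_\A \Ys$ is finite-dimensional, hence so is its subspace $\Xs \sqcap_\B \Ys$. This proves $\B$ is locally finite.

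For part ii), let $\A = \bigoplus_{i \in I} \A_i$ be a direct sum of locally finite super-coalgebras, and let $\Xs, \Ys \subseteq \A$ be finite-dimensional. The main idea is a finite-support argument. Since $\Xs$ and $\Ys$ are finite-dimensional, there is a finite subset $J \subseteq I$ such that $\Xs, \Ys \subseteq \A_J := \bigoplus_{i \in J} \A_i$. A finite direct sum of locally finite super-coalgebras is itself a sub-super-coalgebra of $\A$, and one checks it is again locally finite (this reduces, by induction, to the case of two summands, where $\A_{i} \oplus \A_{j}$ has coproduct landing in $(\A_i \oplus \A_j) \otimes (\A_i \oplus \A_j)$ and the wedge decomposes according to the grading). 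Applying part i), the wedge $\Xs \sqcap_\A \Ys$ equals $\A \cap (\Xs \sqcap_{\A_J} \Ys)$, which is finite-dimensional because $\A_J$ is a finite sum of locally finite super-coalgebras.

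The main obstacle I anticipate is the bookkeeping in the finite case of part ii): one must confirm that the coproduct on a finite direct sum $\bigoplus_{i \in J} \A_i$ respects the decomposition so that the wedge of two subspaces can be controlled summand-by-summand, and that the projections $\A \to \A/\Xs$ interact correctly with the direct sum splitting. This is exactly where the super-version of \cite[Proposition 9.0.0]{sweedler1969hopf}, already invoked in the text, does the heavy lifting; once those formal properties of $\sqcap$ are in hand, both parts follow immediately, which is why the proposition is asserted to be immediate from the definitions.
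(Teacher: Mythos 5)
The paper offers no written proof --- it simply asserts the proposition ``follows immediately from the definition'' --- so the comparison here is against what a complete argument must contain. Your part i) is correct and complete: the inclusion $\B\hookrightarrow\A$ induces injections $\B/\Xs\hookrightarrow\A/\Xs$ and $\B/\Ys\hookrightarrow\A/\Ys$ (tensoring over a field preserves injectivity), the square commutes because $\Delta_\B=\Delta_\A|_\B$, and hence $\Xs\sqcap_\B\Ys=\B\cap(\Xs\sqcap_\A\Ys)$ is a subspace of a finite-dimensional space. No issues there.

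In part ii) there is a genuine, though repairable, gap at the step ``Applying part i), the wedge $\Xs\sqcap_\A\Ys$ equals $\A\cap(\Xs\sqcap_{\A_J}\Ys)$.'' Part i) gives only $\Xs\sqcap_{\A_J}\Ys=\A_J\cap(\Xs\sqcap_\A\Ys)\subseteq\Xs\sqcap_\A\Ys$, i.e.\ the containment in the \emph{wrong} direction for your purposes: it bounds the wedge computed inside $\A_J$ by the wedge computed inside $\A$, whereas you need to know that the wedge computed inside $\A$ does not escape $\A_J$. This is not automatic --- in general the wedge of two subspaces of a sub-super-coalgebra is strictly larger than that sub-super-coalgebra (e.g.\ $\A_0\sqcap\A_0=\A_1$ in the coradical filtration) --- so the containment $\Xs\sqcap_\A\Ys\subseteq\A_J$ must use the direct-sum structure. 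The missing argument is short: writing $a=\sum_i a_i$ with $\Delta(a_i)\in\A_i\otimes\A_i$, the condition $\Delta(a)\in\Xs\otimes\A+\A\otimes\Ys\subseteq\A_J\otimes\A+\A\otimes\A_J$ forces the component of $\Delta(a)$ in $\A_i\otimes\A_i$ to vanish for $i\notin J$, whence $a_i=(\epsilon\otimes\mathrm{Id})\Delta(a_i)=0$. A similar counit-based projection argument, after enlarging $\Xs$ and $\Ys$ to subspaces split along the decomposition (using monotonicity of $\sqcap$), handles the finite-sum case, giving $\Xs\sqcap\Ys=\bigoplus_{i\in J}(\Xs_i\sqcap_{\A_i}\Ys_i)$. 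With that insertion your proof is complete and is surely the argument the authors intend when they call the statement immediate.
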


\subsection{Super-comodules and rational supermodules} 

Let $(\A,\Delta, \epsilon)$ be a super-coalgebra. A \textit{left super-comodule} (over $\A$) is a super $\K$-vector space $\M$ together with a morphism of super $\K$-vector spaces $\psi:\M\to \M\otimes \A$ (the \textit{left-coaction}) such that   $$\psi\otimes Id_\A\circ\psi= Id_\M\otimes\Delta\circ\psi \quad\text{ and }\quad  Id_\M\otimes\epsilon\circ\psi= Id_\M.$$  In a similar way one defines \textit{right super-comodules}.  
 
Let $\M$ be a left super-comodule over $\A$ with structure map $\psi:\M\to\M\otimes \A$. If $\N\subseteq \M$ is a sub super $\K$-vector space of $\M$ and $\psi(\N)\subseteq \N\otimes \A$, then $\N$ a is a super-comodule with structure map $\psi|_\N$. We then say that $\N$ is a \textit{sub-super-comodule} of $\M$.
  
If $\A$ is a super-coalgebra, then the set $\A^*:=\hom_{\mathsf{Vec}_\K}(\A, \K)$  has a $\K$-\textit{superalgebra} structure. Furthermore, $\A^*$ is supercommutative provided that the super-coalgebra $\A$ is super-cocommutative (see \cite[Lemma 8.2.11]{westra2009superrings}). 
 
If $R$ is a superring, a \textit{left} (resp. \textit{right}) $R$-\textit{supermodule} is a $\Z_2$-graded left (resp. right) $R$-module. If $\lambda$ denotes the left action, a right action is defined by
\[
\rho(m, a) = (-1)^{|m||a|} \,  \lambda(a, m), \quad\text{ with }a\in R^h\text{ and }m\in M^h:=M\ev\cup M\od.
\]

With this definition, the left and right actions commute, and $R$ becomes both a left and right $R$-supermodule. In this case, we refer to $R$ simply as an $R$-\textit{supermodule}. Every supermodule considered in this paper is assumed to be equipped with these two compatible structures.  Now, if $M, N$ are $R$-supermodules, then the set $\homm_R(M, N)$ consisting of the morphisms $M\to N$ as $R$-modules carries a natural structure of $R$-supermodule (\cite[\S\,6.1.1]{westra2009superrings}).

Let $\A$ be a super-coalgebra. If $\psi:\A^*\otimes M\to M$ gives $M$ an $\A^*$-supermodule structure, consider the map 
    
    \[
    \rho_\psi:M\to\homm_{\A^*}(\A^*, M), m\mapsto\function{\rho_\psi(m)}{\A^*}{M}{a^*}{\psi(a^*\otimes m).}
    \]
We say that $(M, \psi)$ is a \textit{rational }$\A^*$-supermodule if $\rho_\psi(M)\subseteq M\otimes \A$. 

\begin{proposition}\label{prop:Super:2.1.3:In:Sweedler}
    Let $\A$ be a super-coalgebra, $(N, \phi)$ and $(M, \psi)$ be rational $\A^*$-supermodules.  

    \begin{enumerate}
        \item[\rm i)] A subset $P$ of $N$ is a sub-supermodule if and only if $\rho_\phi(P)\subseteq P\otimes \A$, i.e., $P$ is a sub super-comodule. In particular, a sub-supermodule of a rational supermodule is again rational. 
        \item[\rm ii)] Every cyclic sub-supermodule of $M$ is finite-dimensional, i.e., $M$ is generated by a single homogeneous element. 
        \item[\rm iii)] Any quotient supermodule of $M$ is again rational. 
        \item[\rm iv)] $\alpha:N\to M$ is a morphism of supermodules if and only if $\alpha$ is a morphism of super-comodules, i.e., $\psi\circ \alpha=\alpha\otimes  Id_\A\circ\phi$. 
    \end{enumerate}
\end{proposition}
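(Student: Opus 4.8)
The plan is to translate each statement into its classical ("non-super") analogue from Sweedler's book (specifically the results on rational modules over the dual algebra of a coalgebra, which correspond to comodules), and then check that the $\Z_2$-grading is preserved at every step. The key structural fact I would lean on throughout is the correspondence, already implicit in the definition of rational supermodule, between an $\A^*$-supermodule structure $\psi$ on $M$ and a super-comodule structure $\rho_\psi : M \to M \otimes \A$; the sign conventions in the left/right action $\rho(m,a) = (-1)^{|m||a|}\lambda(a,m)$ must be carried along consistently, and I expect that this bookkeeping is where almost all the work (and all the potential for error) lies.

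For part i), I would argue both directions. If $P \subseteq N$ is a sub-supermodule, then for the comodule map one checks $\rho_\phi(P) \subseteq P \otimes \A$ by evaluating against arbitrary $a^* \in \A^*$ and using that $\psi(a^* \otimes P) \subseteq P$; conversely, if $\rho_\phi(P) \subseteq P \otimes \A$, then $P$ is stable under the $\A^*$-action since the action factors through the comodule map. The ``in particular'' claim---that a sub-supermodule of a rational supermodule is rational---then follows immediately, because $\rho_\phi(P) \subseteq P \otimes \A \subseteq N \otimes \A$ already witnesses rationality of $P$. For part ii), I would take a homogeneous generator $m$, write $\rho_\psi(m) = \sum m_i \otimes a_i$ with $a_i \in \A$, and show the cyclic supermodule $\A^* \cdot m$ is spanned by the finitely many $m_i$; the coassociativity/counit axioms for $\rho_\psi$ force $a^* \cdot m$ to be a $\K$-linear combination of the $m_i$ for every $a^*$, bounding the dimension. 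Part iii) follows by observing that the comodule structure descends to a quotient $M/K$ (for $K$ a sub-super-comodule), using that $\rho_\psi$ composed with the projection lands in $(M/K)\otimes \A$.

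For part iv), the equivalence $\alpha$ is a supermodule morphism $\iff$ $\psi_M \circ \alpha = (\alpha \otimes Id_\A)\circ \phi$ is again a direct graded translation: a $\K$-linear $\alpha$ intertwines the $\A^*$-actions exactly when it intertwines the coactions, which one verifies by pairing against $\A^*$ and invoking the defining relation of $\rho$. Since $\alpha$ is assumed to be a morphism of super $\K$-vector spaces (hence even, or at least parity-homogeneous), no extra signs intrude here beyond those already built into the action.

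\emph{The main obstacle} I anticipate is not conceptual but notational: each classical proof in Sweedler uses the identification $\hom_{\A^*}(\A^*, M) \cong M \otimes \A$ (for suitable $M$) and manipulates Sweedler-type sums, and in the super setting every transposition of tensor factors or every swap between the left and right $\A^*$-action introduces a Koszul sign $(-1)^{|x||y|}$. I would therefore fix, once and for all, that the comodule map $\rho_\psi$ is even and that all generators are taken homogeneous, so that signs are tracked only through the explicit formula for $\rho$. Provided this discipline is maintained, each of i)--iv) reduces to its ungraded counterpart, and---as the excerpt itself signals for analogous results---the remaining details are routine and may be omitted.
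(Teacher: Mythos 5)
Your proposal is correct and follows essentially the same route as the paper: the paper handles i), iii), iv) by citing the superized versions of Sweedler's arguments in Westra's thesis (the same graded translations you sketch), and proves ii) by exactly your argument — write $\rho_\psi(m)=\sum_i m_i\otimes a_i$ for a homogeneous generator $m$ and observe $a^*\cdot m=\sum_i(-1)^{|m_i||a_i|}a^*(a_i)\,m_i$ lies in the span of the $m_i$. (The only cosmetic difference is that for ii) the spanning claim follows directly from the defining relation of $\rho_\psi$, with no appeal to coassociativity or the counit needed.)
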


\begin{proof}
    For the proof of  i), iii) and iv) see \cite[Propositions 9.2.4 and 9.2.5]{westra2009superrings}. For ii), let $m\in M^h$ and write $\rho_\phi(m)=\sum_{i=1}^n m_i\otimes a_i.$  Then  $$a^*\cdot m=\sum_{i=1}^n(-1)^{|m_i||a_i|} a^*(a_i)\,m_i\in\sum_{i=1}^n\K m_i.$$ Thus, ii) follows. 
\end{proof}

\subsection{Coradical filtrations} 

The coradical filtration of a super-coalgebra can be defined by viewing it solely as an ordinary coalgebra, ignoring the $\mathbb{Z}_2$-grading for this particular construction. However, let us briefly describe its construction. 

Let $\A$ be a super-coalgebra. We say that $\A$ is 
 \textit{irreducible} if any two nonzero sub-super-coalgebras of $\A$ have nonzero intersection. We call $\A$ \textit{simple} if it contains no sub-super-coalgebra except $0$ and $\A$ itself. The \textit{coradical} of $\A$, denoted $\cod\,\A$, is the (direct) sum of all simple sub-super-coalgebras of $\A$. For $n\geq0$, and a sub-super $\K$-vector space $\Xs $ of $\A$, we define $\scalebox{1.8}{$\sqcap$}^n\Xs $ inductively as follows:

\[\scalebox{1.8}{$\sqcap$}^0\Xs :=\{0\}, \quad\scalebox{1.8}{$\sqcap$}^1\Xs :=\Xs , \quad \ldots,\quad \scalebox{1.8}{$\sqcap$}^{n+1}\Xs =\left(\scalebox{1.8}{$\sqcap$}^n\Xs \right)\sqcap \Xs .
\]

Now, using \Cref{prop:Super:2.1.3:In:Sweedler} together with \cite[Proposition~6.4.5]{westra2009superrings}, one may proceed as in the proofs of \cite[Theorems~9.0.2 and~9.0.3]{sweedler1969hopf} to establish the following result:

\begin{proposition}\label{Dual:Krull} Let $M$ be a right super-comodule over $\A$. For $\{0\}\subseteq M$, we have 

\[
M=\bigcup_{n\geq0}\{0\}\sqcap\scalebox{1.8}{$\sqcap$}^{n}\cod\,\A.
\]
Additionally, if $N\subset M$ is a sub-supermodule such that $N\sqcap R=R$, then $N=M$. 
\end{proposition}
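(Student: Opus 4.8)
The plan is to transport the whole statement to the dual side via \Cref{prop:Super:2.1.3:In:Sweedler}, where $M$ becomes a rational $\A^{*}$-supermodule and the wedge filtration becomes the radical filtration. Write $J:=\mathrm{rad}(\A^{*})$ for the Jacobson radical of the supercommutative superalgebra $\A^{*}$. The first thing I would do is record the two ``perp'' identities inherited from the non-super case: the coradical–radical duality $\cod\,\A=J^{\perp}$, and the behaviour of the wedge under dualization $(\Xs\sqcap\Ys)^{\perp}=\Xs^{\perp}\,\Ys^{\perp}$ (product of superideals in $\A^{*}$). Iterating the second gives $(\scalebox{1.8}{$\sqcap$}^{n}\cod\,\A)^{\perp}=J^{n}$, while unwinding the definition of the comodule wedge shows $\{0\}\sqcap\Ys=\{\,m\in M\mid \Ys^{\perp}\cdot m=0\,\}$ (the twisting signs in the action only rescale nonzero coefficients, so they do not affect vanishing). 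Together these suggest the guiding identification
\[
\{0\}\sqcap\scalebox{1.8}{$\sqcap$}^{n}\cod\,\A=\{\,m\in M\mid J^{n}\cdot m=0\,\}=\mathrm{Ann}_{M}(J^{n}),
\]
so that the displayed equality $M=\bigcup_{n\ge0}\{0\}\sqcap\scalebox{1.8}{$\sqcap$}^{n}\cod\,\A$ is morally the assertion that $J$ acts locally nilpotently on $M$.

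To make this rigorous (and to sidestep the infinite-dimensionality of $\A^{*}$) I would argue by reduction to a finite-dimensional subcoalgebra. Fix a homogeneous $m\in M^{h}$ and set $V:=\A^{*}\cdot m$, which is finite-dimensional by \Cref{prop:Super:2.1.3:In:Sweedler}~ii). By the fundamental theorem of comodules the restricted coaction factors as $\psi(V)\subseteq V\otimes D$ for a finite-dimensional sub-super-coalgebra $D\subseteq\A$, so $m$ lies in a finite-dimensional $D$-comodule. Dually $D^{*}$ is a finite-dimensional superalgebra, and here \cite[Proposition~6.4.5]{westra2009superrings} applies: its radical is nilpotent, say $\mathrm{rad}(D^{*})^{n}=0$. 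Now the two perp-identities applied inside $D$ (where $(-)^{\perp}$ is an honest bijection between subspaces of $D$ and of $D^{*}$, so no topological subtleties intervene) yield $\scalebox{1.8}{$\sqcap$}^{n}\cod\,D=D$; since $\cod\,D\subseteq\cod\,\A$ and the wedge computed inside $D$ agrees with the one computed inside $\A$, we get $\psi(m)\in V\otimes D\subseteq M\otimes\scalebox{1.8}{$\sqcap$}^{n}\cod\,\A$, that is $m\in\{0\}\sqcap\scalebox{1.8}{$\sqcap$}^{n}\cod\,\A$. As $m$ was arbitrary, the union exhausts $M$.

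For the second assertion, reading the hypothesis as $N\sqcap\cod\,\A=N$ (i.e. $N$ is stable under wedging with the coradical), I would pass to the quotient comodule $M/N$, which is again rational by \Cref{prop:Super:2.1.3:In:Sweedler}~iii). Under the dual dictionary the hypothesis says precisely $\mathrm{Ann}_{M/N}(J)=0$. An immediate induction then gives $\mathrm{Ann}_{M/N}(J^{k})=0$ for all $k$: if $J^{k}\bar m=0$ then $J^{k-1}\bar m\subseteq\mathrm{Ann}_{M/N}(J)=0$, and the inductive hypothesis forces $\bar m=0$. Applying the first part of the proposition to $M/N$ gives $M/N=\bigcup_{k}\mathrm{Ann}_{M/N}(J^{k})=0$, hence $N=M$.

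I expect the main obstacle to be the very first step: pinning down the identities $\cod\,\A=J^{\perp}$ and $(\Xs\sqcap\Ys)^{\perp}=\Xs^{\perp}\Ys^{\perp}$ in the super, infinite-dimensional setting. Since $\A^{*}$ is generally infinite-dimensional, one cannot dualize the whole filtration naively, which is exactly why I route the argument through finite-dimensional subcoalgebras before invoking nilpotence of the radical; one also has to check that the $\mathbb{Z}_{2}$-grading and the twisting signs leave the product-of-superideals computation untouched. Here super-cocommutativity of $\A$ (so that $\A^{*}$ is supercommutative) keeps the radical theory on the finite-dimensional pieces entirely parallel to the classical picture, so beyond this bookkeeping the argument should follow Sweedler's \cite[Theorems~9.0.2 and~9.0.3]{sweedler1969hopf} step for step.
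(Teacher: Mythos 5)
Your argument is correct and is essentially the paper's intended proof: the paper simply defers to Sweedler's Theorems~9.0.2 and~9.0.3 via \Cref{prop:Super:2.1.3:In:Sweedler} and the nilpotence of the radical of a finite-dimensional superalgebra, which is exactly the route you take (wedge--radical perp duality, reduction to a finite-dimensional subcoalgebra using that cyclic rational supermodules are finite-dimensional, and passage to the rational quotient $M/N$ for the second claim). Your reading of the garbled hypothesis ``$N\sqcap R=R$'' as $N\sqcap\cod\,\A=N$ is also the intended one.
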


Note that for any sub-super-coalgebra $\B$, $\{0\}\sqcap \B=\B$. By \Cref{Dual:Krull}, we find that 

$$\A=\bigcup_{n\geq0}\scalebox{1.8}{$\sqcap$}^{n}\cod\,\A.$$

Now let $\A_n=\scalebox{1.8}{$\sqcap$}^{n+1}\cod\,\A$, for $n\geq0$. Then, we have that \[
\A=\bigcup\limits_{n\geq0}\A_n
\]
and 
$\A_n\subseteq\A_{n+1}, \text{ for all }n.$ We then call $\{\A_n\}$ a \textit{coradical filtration of $\A$}. If $\A_1$ is finite-dimensional, we say that $\A$ is of \textit{finite type}. 

\subsection{The super-coalgebra $A^\circ$} 

Let $A$ be a supercommutative $\K$-superalgebra with product $p$ and unit $\eta$. An ideal $\a$ of $A$ is called \textit{cofinite} if $A/\a$ if finite-dimensional. Let
\[
A^\circ:=\{f\in A^*\mid f\in \a^\perp\text{ for some cofinite superideal of }A \}.
\]
The map $p^*:A^*\to(A\otimes A)^*$ induce an isomorphism $$A^\circ\simeq A^\circ\otimes A^\circ$$ and $A^\circ$ becomes a super-coalgebra with counit given by  $\epsilon=\eta^*|_{A^\circ}$. Note that if $A$ is finite-dimensional, then $A^*=A^\circ$ (see \cite[\S1.3]{heyneman1969affine} for further details).

\begin{remark}\label{remark.adj.*.and.circ}
    The functors $(-)^\circ$ and $(-)^*$ are adjoint to one another (\cite[VI, \S\,0]{sweedler1969hopf}). 
\end{remark}

The following proposition will be useful in the sequel. 

\begin{proposition}\label{prop.equiv.fint.type} Let $\A$ be super-cocommutative. Denote by $\A_n$ the subobjects in the coradical filtration of $\A$. The following conditions are equivalent.

\begin{enumerate}
    \item[\rm i)] $\A$ is of finite type.
    \item[\rm ii)] $\A_n$ is finite-dimensional for all $n$. 
    \item[\rm iii)] $\A$ is locally finite and $\A_0$ is finite-dimensional. 
    \item[\rm iv)] $\A_0$ is finite-dimensional and $\A$ is reflexive, i.e., $\A\simeq (\A^*)^\circ$.
    \item[\rm v)] $\A^*$ is Noetherian.
    \item[\rm vi)] The descending chain condition, DCC, for sub-super-coalgebras of $\A$.
\end{enumerate}
\end{proposition}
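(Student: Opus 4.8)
The plan is to first settle the three intrinsic conditions (i), (ii), (iii) by elementary manipulations with the wedge, and then to transport the remaining conditions (iv), (v), (vi) to the dual superalgebra $\A^{*}$, which is supercommutative by \cite[Lemma~8.2.11]{westra2009superrings}. The whole argument rests on a super-analogue of the Heyneman--Sweedler duality dictionary, which I would establish first: (a) the map $\B\mapsto\B^{\perp}$ is an inclusion-reversing bijection between finite-dimensional sub-super-coalgebras of $\A$ and cofinite superideals of $\A^{*}$; (b) $(\cod\,\A)^{\perp}=J$, where $J:=\mathrm{rad}(\A^{*})$ is the Jacobson radical; and (c) the wedge dualizes to the product of superideals, $(\Xs\sqcap\Ys)^{\perp}=\Xs^{\perp}\,\Ys^{\perp}$. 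Iterating (c) with (b) yields the key identity $\A_{n}^{\perp}=J^{\,n+1}$, i.e.\ the coradical filtration is dual to the $J$-adic filtration. Since the filtration always exhausts $\A$ (the remark after \Cref{Dual:Krull}), taking orthogonals gives $\bigcap_{n}J^{\,n+1}=0$ and $\A^{*}=\varprojlim_{n}\A^{*}/J^{\,n+1}$, with $\A^{*}/J^{\,n+1}\cong(\A_{n})^{*}$.

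For the intrinsic block, (ii)$\Rightarrow$(i) is the case $n=1$. For (i)$\Rightarrow$(ii) I would pass to the dual: $\A_{1}$ finite-dimensional forces $\A^{*}/J^{2}\cong(\A_{1})^{*}$ finite-dimensional, hence both $\A^{*}/J$ and $J/J^{2}$ are finite-dimensional; each graded piece $J^{k}/J^{k+1}$ is then a finite-dimensional $\A^{*}/J$-module (being a quotient of $(J/J^{2})^{\otimes k}$), so every $\A^{*}/J^{\,n+1}$ is finite-dimensional and therefore so is $\A_{n}\hookrightarrow(\A_{n})^{**}$. For (i)$\Leftrightarrow$(iii): $\cod\,\A\subseteq\A_{1}$ gives the $\cod\,\A$-condition for free, and, using the wedge-power identity $\scalebox{1.8}{$\sqcap$}^{\,p+q}\cod\,\A=(\scalebox{1.8}{$\sqcap$}^{\,p}\cod\,\A)\sqcap(\scalebox{1.8}{$\sqcap$}^{\,q}\cod\,\A)$, any two finite-dimensional subspaces $\Xs,\Ys$ lie in some $\A_{m}$, whence $\Xs\sqcap\Ys\subseteq\A_{m}\sqcap\A_{m}=\A_{2m+1}$; thus (ii) gives local finiteness, while conversely local finiteness applied to the finite-dimensional $\cod\,\A$ makes $\A_{1}=\cod\,\A\sqcap\cod\,\A$ finite-dimensional.

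For (v), I would argue $\A^{*}$ Noetherian $\Leftrightarrow$ (i). Writing $\cod\,\A=\bigoplus_{i}S_{i}$ as a direct sum of (finite-dimensional) simple sub-super-coalgebras, duality (b) gives $\A^{*}/J\cong\prod_{i}S_{i}^{*}$; since an infinite product of nonzero rings fails the ascending chain condition, Noetherianity of $\A^{*}$ forces finitely many $S_{i}$, i.e.\ $\cod\,\A$ finite-dimensional, and then $J/J^{2}$ finite-dimensional, which is (i). Conversely, from (ii) the ring $\A^{*}=\varprojlim\A^{*}/J^{\,n+1}$ is $J$-adically complete with $\A^{*}/J$ finite-dimensional (Artinian) and $J$ finitely generated, whence $\A^{*}$ is Noetherian. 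Condition (iv) I would read through \Cref{remark.adj.*.and.circ}: when (i)--(iii) hold, $\A=\varinjlim\A_{n}$ with each $\A_{n}$ finite-dimensional and $(\A^{*})^{\circ}=\varinjlim(\A^{*}/J^{\,n+1})^{*}=\varinjlim\A_{n}$, so the adjunction unit $\A\to(\A^{*})^{\circ}$ is an isomorphism and $\cod\,\A$ is finite-dimensional, giving (iv); conversely (iv) exhibits $\A\cong(\A^{*})^{\circ}$ as a finite dual, which is locally finite, so together with $\cod\,\A$ finite-dimensional this is (iii). Finally (vi) is the image of (v) under the correspondence of (a): $\B\mapsto\B^{\perp}$ turns a strictly descending chain of sub-super-coalgebras into a strictly ascending chain of (closed) superideals of $\A^{*}$, so DCC on the former matches ACC on the latter; the infinite-direct-sum argument above also shows directly that (vi) forces $\cod\,\A$ finite-dimensional.

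The main obstacle is the duality dictionary itself, especially the filtration identity $\A_{n}^{\perp}=J^{\,n+1}$: verifying the super-signs in $(\Xs\sqcap\Ys)^{\perp}=\Xs^{\perp}\Ys^{\perp}$ and the identification $(\cod\,\A)^{\perp}=\mathrm{rad}(\A^{*})$ requires the super-versions of \cite[Chapter~9]{sweedler1969hopf}, which the paper has already granted generalize, but which must be invoked with care for super-cocommutative $\A$. A second, more delicate point is the topological bookkeeping in (v)$\Leftrightarrow$(vi): the bijection in (a) is between sub-super-coalgebras and \emph{closed} superideals of $\A^{*}$, so matching ACC on closed ideals with abstract Noetherianity uses that, in the finite-type case, $\A^{*}$ is a complete (pro-finite-dimensional) ring in which every superideal is closed. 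I expect the $\Z_{2}$-grading to play no essential role beyond sign bookkeeping, since the coradical filtration is insensitive to the grading.
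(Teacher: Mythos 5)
Your route is genuinely different from the paper's: where the paper discharges each implication by citing the superized versions of Takeuchi and Heyneman--Radford, you build everything on an explicit duality dictionary ($\A_n^{\perp}=J^{n+1}$, $(\cod\,\A)^{\perp}=\mathrm{rad}(\A^*)$) and then run completed-ring arguments on $\A^*$. For i)$\Leftrightarrow$ii)$\Leftrightarrow$iii) and i)$\Leftrightarrow$v) this is sound and more self-contained than the paper's proof, granted the caveat you yourself flag: $(\Xs\sqcap\Ys)^{\perp}$ is only the \emph{closure} of $\Xs^{\perp}\Ys^{\perp}$, so the identity $\A_n^{\perp}=J^{n+1}$ (rather than the easy inclusion $\supseteq$, which is all that i)$\Rightarrow$ii) needs) requires showing that products of closed cofinite superideals of the linearly compact superalgebra $\A^*$ are closed.

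There is, however, a genuine gap in your treatment of vi). The perp correspondence converts DCC on sub-super-coalgebras only into ACC on \emph{closed} superideals of $\A^*$, and your proposed repair --- that in the finite-type case every superideal of $\A^*$ is closed --- is circular, since finite type is precisely what must be extracted from vi). Your fallback, that the infinite-direct-sum argument shows vi) forces $\cod\,\A=\A_0$ to be finite-dimensional, is strictly weaker than finite type: the cofree pointed irreducible super-coalgebra on an infinite-dimensional space has $\A_0=\K$ but $\A_1$ infinite-dimensional, so finite-dimensionality of $\A_0$ does not bound $\A_1$. What is missing is exactly the content of \cite[Lemma~2.3.1]{heyneman1974reflexivity}, which the paper invokes for vi)$\Rightarrow$i): one must show that an infinite-dimensional $\A_1/\A_0$ already produces an infinite strictly descending chain of sub-super-coalgebras. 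Without that step the cycle of implications does not close through vi). A secondary point: your justification of iv)$\Rightarrow$iii), ``$\A\simeq(\A^*)^{\circ}$ is a finite dual, hence locally finite,'' is not valid as a general principle --- $R^{\circ}$ need not be locally finite for an arbitrary supercommutative $R$ (e.g.\ $R=\K[x_1,x_2,\ldots]$ with $I$ the augmentation ideal has $I^{\perp}\sqcap I^{\perp}$ infinite-dimensional in $R^{\circ}$, since $I^2$ is not cofinite). The correct statement, which the paper cites, is that \emph{reflexive} super-coalgebras are locally finite, and its proof genuinely uses reflexivity rather than mere membership in the essential image of $(-)^{\circ}$.
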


\begin{proof} We will show the following:

{ 
    \[
\xymatrix{&\text{iv)}\ar@{=>}[ld]&\\
\text{iii)}\ar@{<=>}[r]&\text{i)}\ar@{=>}[d]\ar@{=>}[u]\ar@{<=>}[r]&\text{ii)}\\
&\text{v)}\ar@{<=>}[r]&\text{vi)}\ar@{=>}[ul]}
\]}

The proof of \cite[Proposition 1.4.1]{takeuchi1974tangent} shows the equivalence between i) and ii), and also can be used to deduce that i) (and hence ii)) implies v).  

To show that i) (hence ii)) implies iv), one may proceed as in \cite[pp.26-28]{takeuchi1974tangent}.  

To show that i) (or equivalently that ii)) implies iii), one may proceed as in \cite[Proposition 2.4.3]{heyneman1974reflexivity}, just keeping in mind that the only group-like elements are even. (Indeed, $a$ is \textit{group-like} if $\Delta(a)=a\otimes a$ and $\epsilon(a)=1$, so that no odd element may be group-like.) The other implication is clear, so we find that i), ii), and iii) are equivalent. 

To prove that vi) implies i) one proceeds as in \cite[Lemma 2.3.1]{heyneman1974reflexivity}. 

A sub-super-vector space $\Vs$ of $\A$ is a sub-super-coalgebra if and only if  $\Vs^\perp$ is a superideal of $\A^*$. Thus, v) is equivalent to vi) because for sub super $\K$-vector spaces $\Vs$ and $\Ws $ of $\A$, $\Vs\subseteq \Ws $ is equivalent to $\Vs^\perp\supseteq \Ws ^\perp$. 

Finally, as in \cite{heyneman1974reflexivity}, any reflexive super-coalgebra is locally finite, then iv) implies iii), and the proof of the proposition is complete. 
\end{proof}

\begin{proposition}\label{tensor.fin.type}
    If $\A, \B$ are  of finite type, then $\A\otimes\B$ is of finite type. 
\end{proposition}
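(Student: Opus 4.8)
The plan is to use the characterization of ``finite type'' supplied by Proposition \ref{prop.equiv.fint.type}, specifically the equivalence of i) with iii): a super-cocommutative super-coalgebra is of finite type if and only if it is locally finite and its coradical (equivalently, $\A_0 = \cod\,\A$) is finite-dimensional. First I would verify that $\A \otimes \B$ is again super-cocommutative, which is routine since the coproduct on the tensor product is built from those of $\A$ and $\B$ via the twisting map $c_{\A,\B}$, and cocommutativity is inherited. Then the two things I must check are: (a) $\cod(\A \otimes \B)$ is finite-dimensional, and (b) $\A \otimes \B$ is locally finite.

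For part (a), the key fact I would invoke is the behavior of the coradical under tensor products. The coradical of a tensor product of coalgebras satisfies $\cod(\A \otimes \B) \subseteq \cod\,\A \otimes \cod\,\B$ (this is the super-analogue of the classical statement from \cite{sweedler1969hopf} that every simple subcoalgebra of $\A\otimes\B$ is contained in the tensor product of the coradicals; over a field, in the cocommutative case, simple subcoalgebras are dual to field extensions and the tensor product of the coradicals captures them). Since $\A$ and $\B$ are of finite type, $\cod\,\A$ and $\cod\,\B$ are finite-dimensional by Proposition \ref{prop.equiv.fint.type} iii), hence $\cod\,\A \otimes \cod\,\B$ is finite-dimensional, and therefore so is $\cod(\A\otimes\B)$.

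For part (b), local finiteness, I would argue as follows. Given finite-dimensional sub super $\K$-vector spaces $\Xs, \Ys \subseteq \A \otimes \B$, I need $\Xs \sqcap \Ys$ to be finite-dimensional. The cleanest route is to pass through the dual algebra side using Proposition \ref{prop.equiv.fint.type} v): $\A$ and $\B$ of finite type means $\A^*$ and $\B^*$ are Noetherian (profinite) $\K$-superalgebras. One expects $(\A\otimes\B)^* $ to be closely related to a completed tensor product $\A^* \,\widehat{\otimes}\, \B^*$, and the Noetherianity of such a completed tensor product of Noetherian profinite superalgebras would give, again via v), that $\A\otimes\B$ is of finite type. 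Alternatively, staying on the coalgebra side, one estimates $\dim(\Xs \sqcap \Ys)$ by using the compatibility of the wedge product with $\Delta_{\A\otimes\B}$ and reducing to wedges taken inside $\A$ and $\B$ separately, where local finiteness is available by hypothesis.

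The main obstacle I anticipate is part (b): controlling the wedge product $\sqcap$ on the tensor product directly is delicate, because $\Delta_{\A\otimes\B}$ mixes the two tensor factors through the twisting map, so a finite-dimensional subspace $\Xs$ of $\A\otimes\B$ need not be a tensor product of subspaces and its wedge cannot be computed factorwise in an obvious way. My preferred resolution is therefore to avoid the direct wedge computation entirely and instead establish the finiteness of $\cod(\A\otimes\B)$ as in (a) and then invoke the equivalence i) $\Leftrightarrow$ iii) of Proposition \ref{prop.equiv.fint.type}, using local finiteness of the factors together with the finite-dimensionality of the coradical; the super-bookkeeping (signs from $c_{\A,\B}$, the fact that group-like elements are even) is exactly the kind of routine adaptation already flagged in the proof of Proposition \ref{prop.equiv.fint.type}, so I would keep those details brief and lean on the cited classical arguments in \cite{sweedler1969hopf} and \cite{heyneman1974reflexivity}.
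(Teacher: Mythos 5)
There is a genuine gap at exactly the point you flag as the obstacle, and your ``preferred resolution'' does not actually close it. To invoke the implication iii) $\Rightarrow$ i) of \Cref{prop.equiv.fint.type} you need \emph{two} things: that $\cod(\A\otimes\B)$ is finite-dimensional (your part (a), which is fine) \emph{and} that $\A\otimes\B$ is locally finite. For the latter you offer only the phrase ``using local finiteness of the factors,'' i.e.\ you implicitly assert that a tensor product of locally finite super-coalgebras is locally finite. That is precisely the delicate claim you identified in your obstacle paragraph, and it is never proved; the dual-algebra detour you sketch as an alternative is also problematic, since $(\A\otimes\B)^*$ is not $\A^*\otimes\B^*$ in infinite dimensions and the Noetherianity of the relevant completed tensor product is itself a nontrivial theorem. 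As written, the argument is circular at its key step.

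The fix --- and the route the paper actually takes --- is to bypass condition iii) entirely and verify the definition of finite type (condition i)) directly. The super analogue of \cite[2.3.13]{heyneman1974reflexivity} gives, for every $n\geq 0$,
\[
(\A\otimes\B)_n \;\subseteq\; \sum_{r+s=n}\A_r\otimes\B_s .
\]
Your coradical estimate $\cod(\A\otimes\B)\subseteq\cod\A\otimes\cod\B$ is exactly the case $n=0$ of this inclusion; what you need is the case $n=1$, which yields $(\A\otimes\B)_1\subseteq \A_0\otimes\B_1+\A_1\otimes\B_0$. Since $\A_0,\B_0$ are finite-dimensional by \Cref{prop.equiv.fint.type} and $\A_1,\B_1$ are finite-dimensional by hypothesis, $(\A\otimes\B)_1$ is finite-dimensional, which is the definition of finite type --- no local finiteness of the tensor product and no wedge computation on $\A\otimes\B$ is required. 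If you want to keep your two-condition strategy, you must supply an actual proof that $\A\otimes\B$ is locally finite (e.g.\ via the full filtration inclusion above together with \Cref{prop.equiv.fint.type} ii) $\Leftrightarrow$ iii)), but at that point the direct argument is both shorter and strictly weaker in its inputs.
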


\begin{proof}  
Proceeding as in \cite[2.3.13]{heyneman1974reflexivity}, we see that for each $n \geq 0$, the following holds:
\[
(\A \otimes \B)_n \subseteq \sum_{r+s=n} \A_r \otimes \B_s.
\]
Thus,
\[
(\A \otimes\B)_1 \subseteq \A_0 \otimes \B_1 + \A_1 \otimes \B_0.
\]
Now, since $\A_0$ and $\B_0$ are finite-dimensional by \Cref{prop.equiv.fint.type}, and $\A_1$ and $\B_1$ are finite-dimensional by assumption, we see that the right-hand side of the inclusion above is finite-dimensional. Therefore, $(\A \otimes \B)_1$ is finite-dimensional as well, and the proposition follows.
\end{proof}

\subsection{Cotensors} 

Let $\A$ be a super-coalgebra and $\M$ a right super-comodule over $\A$ and $\N$ a left super-comodule over $\A$. The \textit{cotensor product} $\M\square_\A\N$ is the kernel of the map 

\[
\xymatrix{\M\otimes\N\ar@<-1ex>[rr]_{ Id_\A\otimes\theta_\N}\ar@<+1ex>[rr]^{\theta_\M\otimes Id_\A}&&\M\otimes \A\otimes \N,}
\]
where $\theta_\M$ and $\theta_\N$ are respectively the structure maps of $\M$ and of $\N$. It is a super-comodule over $\A$. The cotensor product endows the category of super-comodules over $\A$, denoted by $\mathsf{SMod}^\A$, with the structure of a monoidal category (see \cite{takahashi2024quotients} for further details). 

\subsection{Cofree super-coalgebras}\label{cofree.coalgebra.sect}

If $\Vs$ is a super vector space, a pair $(\A, \pi)$ with $\A$ a super-coalgebra and $\pi:\A\to \Vs$ is called a \textit{cofree super-coalgebra} on $\Vs$ in case for any super-coalgebra $\B$ and map $\theta:\B\to\Vs$ there is a unique morphism of super-coalgebras $F:\B\to\A$ such that $\theta=\pi\circ F$. If $\A$ exists, it is unique up to a unique super-coalgebra isomorphism. We denote the cofree super-coalgebra on $\Vs$ by $\mathfrak{Cof}(\Vs)$. The existence and uniqueness (up to isomorphism) of \( \mathfrak{Cof}(\Vs) \) are guaranteed by the following proposition.

\begin{proposition}
    If $\Vs\in\mathsf{SVec}_\K$, the cofree super-coalgebra on $\Vs$ always exists and is unique up to isomorphism. 
\end{proposition}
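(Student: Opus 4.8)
The plan is to separate uniqueness (which is purely formal) from existence (which requires a construction), and to reduce existence to the finite-dimensional case, where $\mathfrak{Cof}(\Vs)$ can be produced explicitly as the finite dual of a free superalgebra.

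First, \emph{uniqueness}. This is the usual universal-property argument. If $(\A,\pi)$ and $(\A',\pi')$ both satisfy the defining property, then applying the universal property of $\A$ to $\pi':\A'\to\Vs$ gives a unique super-coalgebra morphism $F:\A'\to\A$ with $\pi\circ F=\pi'$, and symmetrically a morphism $G:\A\to\A'$ with $\pi'\circ G=\pi$. Since $\pi\circ(F\circ G)=\pi=\pi\circ\id_\A$, the uniqueness clause (applied to $\theta=\pi$, $\B=\A$) forces $F\circ G=\id_\A$, and likewise $G\circ F=\id_{\A'}$; hence $\A\simeq\A'$ canonically.

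Next, \emph{existence for finite-dimensional $\Vs$}. I would set $\mathfrak{Cof}(\Vs):=S(\Vs^*)^\circ$, where $S(\Vs^*)$ is the free supercommutative $\K$-superalgebra on the dual $\Vs^*$ (symmetric algebra on the even part, exterior algebra on the odd part) and $(-)^\circ$ is the finite-dual super-coalgebra constructed earlier. The projection $\pi$ is the composite $S(\Vs^*)^\circ\hookrightarrow S(\Vs^*)^*\to(\Vs^*)^*\cong\Vs$ dual to the inclusion of generators $\Vs^*\hookrightarrow S(\Vs^*)$, the last identification using $\dim_\K\Vs<\infty$. To verify the universal property I would chain three natural bijections, for an arbitrary (super-cocommutative) super-coalgebra $\B$:
\[
\hom(\B,\,S(\Vs^*)^\circ)\cong\hom_{\alg}(S(\Vs^*),\B^*)\cong\hom(\Vs^*,\B^*)\cong\hom(\B,\Vs),
\]
where the first is the adjunction between $(-)^*$ and $(-)^\circ$ of \Cref{remark.adj.*.and.circ}, the second is the freeness of $S(\Vs^*)$ (here $\B^*$ is supercommutative because $\B$ is super-cocommutative), and the third is linear duality together with $\Vs^{**}\cong\Vs$. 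Tracking $\id$ through this chain identifies the bijection with composition against $\pi$, which is exactly the required property. The only genuinely super-specific point is the bookkeeping of the $\Z_2$-grading and the Koszul signs in $S(\Vs^*)$, in the finite-dual coproduct, and in the twisting map $c$; these are routine given the preceding subsections.

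Finally, the \emph{general case}, which I expect to be the main obstacle. I would write $\Vs=\varinjlim_\alpha\Vs_\alpha$ as the filtered union of its finite-dimensional sub-super-vector-spaces and set $\mathfrak{Cof}(\Vs):=\varinjlim_\alpha\mathfrak{Cof}(\Vs_\alpha)$, the colimit computed on underlying super vector spaces (filtered colimits of super-coalgebras are created by the forgetful functor to $\mathsf{SVec}_\K$, since $\otimes$ commutes with filtered colimits), with $\pi:=\varinjlim_\alpha\pi_\alpha$. To check the universal property I would first reduce the source: by the local finiteness of \Cref{prop:Super:2.1.3:In:Sweedler}, every super-coalgebra is the filtered union of its finite-dimensional sub-super-coalgebras, and the image of a finite-dimensional super-coalgebra under a morphism is again finite-dimensional, so it suffices to factor maps $\theta:\B\to\Vs$ with $\B$ finite-dimensional and then glue. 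For such $\B$ the image $\theta(\B)$ lies in some $\Vs_\alpha$, the finite-dimensional case yields a unique $F_\alpha:\B\to\mathfrak{Cof}(\Vs_\alpha)$, and composition with the colimit coprojection produces $F$. The delicate points—where the real work lies—are (i) showing $F$ is independent of the chosen stage $\alpha$ and that uniqueness holds against all of $\mathfrak{Cof}(\Vs)$ (by pushing any competitor $F'$ into a common stage via the filtered structure and invoking finite-dimensional uniqueness there), and (ii) checking that the factorizations glue coherently over a general $\B=\bigcup_i\B_i$. This is precisely where the local-finiteness and coradical machinery carry the argument, exactly as in the classical treatment of \cite[\S6.4]{sweedler1969hopf}.
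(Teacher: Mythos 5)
Your proposal is essentially correct, but it is genuinely different from what the paper does: the paper gives no construction at all and simply cites \cite[Proposition~3.5]{masuoka2011quotient} (with $\mathbb{C}=\K$) and \cite[Proposition~3.11]{HOSHI202028}. What you have written is, in effect, a self-contained proof of the cited result. Your uniqueness argument is the standard one and is fine; your finite-dimensional construction $\mathfrak{Cof}(\Vs)=S(\Vs^*)^\circ$ with the three-step adjunction chain is the classical Sweedler-type argument transported to the super setting, and the reduction of the general case to the finite-dimensional one via finite-dimensional sub-super-coalgebras is exactly the mechanism that makes the classical proof work. The trade-off is clear: the paper buys brevity by outsourcing the construction, while your route makes visible where super-cocommutativity and local finiteness actually enter.

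Two points deserve to be made explicit rather than left implicit. First, your freeness step $\hom_{\alg}(S(\Vs^*),\B^*)\cong\hom(\Vs^*,\B^*)$ requires $\B^*$ to be supercommutative, i.e.\ $\B$ to be super-cocommutative; so what you construct is the cofree object in $\mathsf{SCoalg}_\K$ (the cocommutative category), not in the category of all super-coalgebras. This matches the paper's conventions and its references, but the universal property as you verify it should be stated against super-cocommutative $\B$ only; for arbitrary $\B$ you would have to replace $S(\Vs^*)$ by the tensor superalgebra. Second, in the passage to infinite-dimensional $\Vs$ you need the transition maps $\mathfrak{Cof}(\Vs_\alpha)\to\mathfrak{Cof}(\Vs_\beta)$ to be injective in order to realize the colimit as a filtered union and to push a finite-dimensional sub-super-coalgebra of the colimit into a single stage; this is not automatic for a right-adjoint functor applied to monomorphisms, but it does hold here because $\Vs_\alpha\subseteq\Vs_\beta$ dualizes to a surjection $S(\Vs_\beta^*)\to S(\Vs_\alpha^*)$ and $(-)^\circ$ carries surjections of superalgebras to injections of super-coalgebras. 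With these two remarks supplied, your argument closes.
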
 

\begin{proof}
    Take \( \mathbb{C}= \K \) in \cite[Proposition~3.5]{masuoka2011quotient}, or see \cite[Proposition~3.11]{HOSHI202028}.
\end{proof}

The following proposition will be needed later.

\begin{proposition}\label{lemma.emd.fn.tp}
    Any super-coalgebra of finite type can be embedded into a $\mathfrak{Cof}(\Vs)$ for some finite-dimensional super vector space  $\Vs$.
\end{proposition}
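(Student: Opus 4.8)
The plan is to dualize the problem and solve it inside the category of (profinite) supercommutative superalgebras, mirroring the classical argument of Heyneman--Radford. The first step is to identify the cofree object: for a finite-dimensional $\Vs\in\mathsf{SVec}_\K$, I claim $\mathfrak{Cof}(\Vs)\simeq S(\Vs^*)^\circ$, where $S(\Vs^*)$ denotes the free supercommutative $\K$-superalgebra on $\Vs^*$. Indeed, combining the $\big((-)^\circ,(-)^*\big)$-adjunction of \Cref{remark.adj.*.and.circ}, the free-superalgebra adjunction, and the self-duality $\Vs^{**}\simeq\Vs$ (valid since $\dim_\K\Vs<\infty$), one obtains natural isomorphisms
\[
\hom_{\mathsf{SCoalg}_\K}(\mathcal C, S(\Vs^*)^\circ)\simeq \hom_{\alg}(S(\Vs^*),\mathcal C^*)\simeq \hom_{\mathsf{SVec}_\K}(\Vs^*,\mathcal C^*)\simeq \hom_{\mathsf{SVec}_\K}(\mathcal C,\Vs),
\]
so $S(\Vs^*)^\circ$ represents the same functor as $\mathfrak{Cof}(\Vs)$. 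Since $\A$ is of finite type it is reflexive, $\A\simeq(\A^*)^\circ$, by \Cref{prop.equiv.fint.type} iv); because $(-)^\circ$ is contravariant, it therefore suffices to produce a morphism of superalgebras $\psi\colon S(\Vs^*)\to\A^*$ with dense image and to take the embedding to be $\psi^\circ$.

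Next I analyze the structure of $\A^*$. Writing $J=(\cod\A)^\perp$, which is the Jacobson radical of $\A^*$, the wedge--ideal duality $(\Xs\sqcap\Ys)^\perp=\Xs^\perp\cdot\Ys^\perp$ (the super-analogue of the properties recalled in \Cref{Sub.sec.2.5}) gives $\A_n^\perp=J^{n+1}$ for all $n$. Hence $\A^*/J\simeq\A_0^*$ and $J/J^2\simeq(\A_1/\A_0)^*$, both finite-dimensional precisely because $\A$ is of finite type, so that $\A_0$ and $\A_1$ are finite-dimensional by \Cref{prop.equiv.fint.type}. Moreover $\A=\bigcup_n\A_n$ forces $\bigcap_n J^{n+1}=0$ and exhibits $\A^*=\varprojlim_n \A^*/J^{n+1}$ as $J$-adically complete. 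Choosing finitely many homogeneous elements of $\A^*$ lifting generators of the finite-dimensional superalgebra $\A^*/J$ together with a basis of $J/J^2$, a Nakayama-plus-completeness argument shows that the closed subalgebra they generate is all of $\A^*$; letting $\Vs$ be the finite-dimensional super vector space spanned by their preimages, this provides the desired $\psi\colon S(\Vs^*)\to\A^*$ with dense image.

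Finally I check that $\psi^\circ\colon\A\simeq(\A^*)^\circ\to S(\Vs^*)^\circ\simeq\mathfrak{Cof}(\Vs)$ is injective. The identification $\A=(\A^*)^\circ=\bigcup_n(\A^*/J^{n+1})^*$ means that every element of $\A$ is a $J$-adically continuous functional on $\A^*$. If $f\in\A$ satisfies $\psi^\circ(f)=f\circ\psi=0$, then $f$ vanishes on $\im\psi$, hence on its $J$-adic closure $\A^*$, so $f=0$; thus $\psi^\circ$ is the sought embedding. The main obstacle is the middle step: proving that $\A^*$ is topologically generated by finitely many elements, which is exactly where finite type (finiteness of both $\A_0$ and $\A_1$) is essential and where the completeness/Nakayama argument together with the super wedge--ideal duality must be set up carefully. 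Once $\psi$ is in hand, the passage to $\psi^\circ$ and its injectivity are formal.
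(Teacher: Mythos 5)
Your argument is correct, but it takes a genuinely different (dual) route from the one the paper relies on: the paper's proof is only a pointer to Takeuchi, where the classical argument stays entirely on the coalgebra side. There one notes that $\A_1$ is finite-dimensional by the definition of finite type, takes $\Vs=\A_1$, chooses a linear projection $\pi:\A\to\A_1$, and lets $F:\A\to\mathfrak{Cof}(\A_1)$ be the super-coalgebra map given by the universal property of \Cref{cofree.coalgebra.sect}; since $p\circ F|_{\A_1}=\pi|_{\A_1}=\mathrm{id}$, $F$ is injective on $\A_1=\cod\A\sqcap\cod\A$, and the Heyneman--Radford injectivity criterion (a coalgebra map injective on the first coradical term is injective, which superizes without change) finishes the proof in two lines. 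Your dualized argument instead needs reflexivity $\A\simeq(\A^*)^\circ$ (available as \Cref{prop.equiv.fint.type}~iv)), the identification $\mathfrak{Cof}(\Vs)\simeq S(\Vs^*)^\circ$, the wedge--ideal duality $\A_n^\perp=J^{n+1}$ (the superized Sweedler Proposition~9.0.0 already invoked in \Cref{Sub.sec.2.5}), and topological finite generation of the complete Noetherian algebra $\A^*$; each step checks out, so the proof is sound, but it shifts the burden onto profinite-algebra bookkeeping rather than onto the injectivity criterion. Two small points to tidy: the space you describe as ``$\Vs$ spanned by their preimages'' should be the \emph{dual} of the span $W\subseteq\A^*$ of the chosen homogeneous elements, so that $\Vs^*=W$ includes into $\A^*$ and extends to $\psi:S(\Vs^*)\to\A^*$; and the injectivity of $\psi^\circ$ is cleanest if you state that density means $\mathrm{Im}\,\psi+J^{n+1}=\A^*$ for every $n$, while each $f\in\A_n$ annihilates $J^{n+1}$.
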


\begin{proof}   
  See \cite[p.1497]{takeuchi1977formal}. 
\end{proof}

\subsection{Krull superdimension of super-coalgebras} 

Let $\A$ be a super-coalgebra. A nonzero sub-super-coalgebra $\B$ of $\A$ is said to be \textit{coprime} if whenever $\B\subseteq \mathbf{X}\sqcap \mathbf{Y}$, then $\B\subseteq \mathbf{X}$ or $\B\subseteq \mathbf{Y}$, for any sub-super-coalgebras $\mathbf{X}$ and $\mathbf{Y}$ of $\A$. This is equivalent to requiring that $\A^\perp$ is a prime ideal. It is not hard to show that coprime sub-super-coalgebras of $\A$  are purely even: they are contained in $\A\ev$. 

Let $\A$ be a super-cocommutative super-coalgebra of finite type. Recall from \Cref{Sub.sec.2.5} that $\A\ev$ has a coalgebra structure. Suppose that with this structure, we have $\kdim(\A^*\ev)<\infty$. We define the \textit{Krull superdimension} of $\A$ to be the Krull superdimension of $\A^*$, and is denoted by $\ksdim\,\A$. 

By \Cref{prop.equiv.fint.type}, $\A^*$ is Noetherian and in particular, $(\A^*)\ev\simeq(\A\ev)^*$ is so. Thus, $\A\ev$ is of finite type and hence $\kdim\,\A\ev=\kdim\,(\A\ev)^*$. In particular, $\kdim\,\A\ev=\ksdim\ev\,\A$.


\section{Definition of formal superscheme}\label{sec.def.formal.super}

In this section, we define the notion of a formal superscheme and derive some basic properties. In particular, we show that this notion, as its purely even counterpart, admits several equivalent definitions. Since some of the required arguments are identical to those in the commutative setting, we will occasionally omit the details. 

\subsection{(Formal) Super $\K$-functors} 

Let $\alg$ be the category of supercommutative $\K$-superalgebras and $ \sets$ the category of sets. 

\begin{definition}
    A (covariant) functor $\alg\to\sets$ is called a \textit{super $\K$-functor}. We denote the category of super $\K$-functors by $\SF$.
\end{definition}

\begin{definition}
\ 
 
\begin{itemize}
    \item[\rm i)] Let $\mathcal{B}:\mathsf{SAlg}_\K\rightarrow \mathsf{Alg}_\K$ be the functor that assigns to each superalgebra $A$ its bosonic reduction  $\overline{A}$.
    \item[ii)] We define the functor $\mathcal{S}:\mathsf{Alg}_\K\rightarrow\mathsf{SAlg}_\K$ on objects as $R\mapsto \mathcal{S}(R)$ with $\mathcal{S}(R)_{\overline{0}}=R$ and $\mathcal{S}(R)_{\overline{1}}=0$. 
    
    \item[\rm iii)] Let $\bX$ be a super $\K$-functor. The \textit{bosonic reduction of } $\bX$, denoted by $\bX_{\mathrm{bos}}$, is the composition $\bX_{\mathrm{bos}}:= \bX\circ\mathcal{S}$. 
\end{itemize}
\end{definition} 

\begin{remark}
\ 
 
\begin{itemize}
    \item[i)] The functor $\mathcal{S}$ is right-adjoint to $\mathcal{B}$.
    \item[ii)] The process of taking a super $\K$-functor to its bosonic reduction can be seen as a functor 
    \[
    \mathfrak{B}:\mathsf{SFunt}_\K\rightarrow \mathsf{Funt}_\K, 
    \]
    and we also have a functor 
    \[
    \mathfrak{S}:\mathsf{Funt}_\K\rightarrow \mathsf{SFunt}_\K
    \]
    that maps $\bX$ to the composition $\bX\circ\mathcal{B}$ of a $\K$-functor $\bX$ with $\mathcal{B}$.
\end{itemize} 
\end{remark}

\begin{proposition}
   Notation as above. The functor $\mathfrak{B}$ is left-adjoint to $\mathfrak{S}$. 
\end{proposition}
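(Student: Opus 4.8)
The plan is to obtain $\mathfrak{B}\dashv\mathfrak{S}$ as a purely formal consequence of the adjunction $\mathcal{B}\dashv\mathcal{S}$ recorded in the Remark, via the standard principle that an adjunction between two categories induces, by precomposition, an adjunction in the \emph{opposite} direction between the associated functor categories. Since $\mathfrak{B}(\bX)=\bX\circ\mathcal{S}$ and $\mathfrak{S}(\bY)=\bY\circ\mathcal{B}$, what must be produced, for every super $\K$-functor $\bX$ and every $\K$-functor $\bY$, is a bijection
\[
\hom_{\mathsf{Funt}_\K}(\bX\circ\mathcal{S},\,\bY)\;\cong\;\hom_{\mathsf{SFunt}_\K}(\bX,\,\bY\circ\mathcal{B})
\]
natural in both arguments, where the two sides are sets of natural transformations.

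First I would fix the data of the adjunction $\mathcal{B}\dashv\mathcal{S}$: its unit $\eta\colon\mathrm{id}_{\mathsf{SAlg}_\K}\to\mathcal{S}\mathcal{B}$ and counit $\epsilon\colon\mathcal{B}\mathcal{S}\to\mathrm{id}_{\mathsf{Alg}_\K}$. A useful preliminary observation is that $\mathcal{B}\mathcal{S}=\mathrm{id}_{\mathsf{Alg}_\K}$: indeed $\mathcal{S}(R)$ has trivial odd part, so $\mathfrak{J}_{\mathcal{S}(R)}=0$ and $\overline{\mathcal{S}(R)}=R$; consequently $\epsilon$ is the identity. Moreover $\eta_A\colon A\to\mathcal{S}(\overline{A})$ is simply the canonical projection $A\to\overline{A}$ regarded as a morphism into $\mathcal{S}(\overline{A})$.

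Next I would define the two assignments componentwise. Given $\alpha\colon\bX\circ\mathcal{S}\to\bY$, set $\beta_A:=\alpha_{\overline{A}}\circ\bX(\eta_A)$ for each $A\in\mathsf{SAlg}_\K$, which is a map $\bX(A)\to\bY(\overline{A})=(\bY\circ\mathcal{B})(A)$. Conversely, given $\beta\colon\bX\to\bY\circ\mathcal{B}$, set $\alpha_R:=\bY(\epsilon_R)\circ\beta_{\mathcal{S}(R)}=\beta_{\mathcal{S}(R)}$ for each $R\in\mathsf{Alg}_\K$ (the second equality since $\epsilon_R=\mathrm{id}$), a map $\bX(\mathcal{S}(R))\to\bY(R)$. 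The routine checks are then: that $\beta$, respectively $\alpha$, is natural, using naturality of $\alpha$ and $\eta$, respectively of $\beta$ and $\epsilon$; that the two assignments are mutually inverse; and that the bijection is natural in $\bX$ and in $\bY$. For the inverse property one direction reduces, after unwinding, to the identity $\beta_{\mathcal{S}(\overline{A})}\circ\bX(\eta_A)=\beta_A$, which follows from naturality of $\beta$ applied to $\eta_A$ together with the triangle identity $\mathcal{B}(\eta_A)=\mathrm{id}_{\overline{A}}$; the other direction reduces to $\bX(\eta_{\mathcal{S}(R)})=\mathrm{id}$, which holds because $\eta_{\mathcal{S}(R)}=\mathrm{id}_{\mathcal{S}(R)}$ by the second triangle identity.

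The argument is entirely formal, so there is no genuine obstacle. The only point demanding care is bookkeeping with the directions of precomposition — ensuring that it is $\mathfrak{B}=-\circ\mathcal{S}$ (and not $\mathfrak{S}$) that is the left adjoint, and that the unit and counit are inserted on the correct side. The collapse $\mathcal{B}\mathcal{S}=\mathrm{id}$ and the resulting triviality of $\epsilon$ remove most of the potential friction, so that the verification of the triangle identities becomes essentially immediate.
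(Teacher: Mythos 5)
Your proposal is correct and follows essentially the same route as the paper: the forward assignment $\alpha\mapsto\beta$ with $\beta_A=\alpha_{\overline{A}}\circ\bX(\eta_A)$ is exactly the composition $\bX(A)\to\bX(\mathcal{S}\mathcal{B}(A))\to\bY(\mathcal{B}(A))$ used in the paper's proof, and your inverse via $\beta_{\mathcal{S}(R)}$ (together with the observation $\mathcal{B}\mathcal{S}=\mathrm{id}$ and the triangle identities) correctly supplies the "functorial inverse" that the paper leaves implicit.
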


\begin{proof} Let $T:\mathfrak{B}(\bX)\rightarrow \bY$ be a morphism. Let $A$ be a superalgebra and $f_A:A\rightarrow \mathcal{S}\circ\mathcal{B}(A)$ be the adjunction map. Consider the following composition: 
  $$\bX(A)\rightarrow\bX(\mathcal{S}\circ\mathcal{B}(A))=\mathfrak{B}(\bX)(\mathcal{B}(A))\rightarrow\bY(\mathcal{B}(A))=\mathfrak{S}(\bY)(A),$$
  where the first map is induced by $f_A$ and the second is $T_{\mathcal{B}(A)}$. This maps is functorial as $f_A$ and  $T_{\mathcal{B}(A)}$ are. Similarly, we construct a functorial inverse. 
\end{proof}

\begin{definition}
    Let $A\in\alg$. We define the super $\K$-functor $\sp_\K\,A$ (or simply $\sp\,A)$ as

\begin{align*}
    B&\mapsto\hom_{\alg}(A, B),\\
    (\varphi:B\to C)&\mapsto\function{(\sp\,A)(\varphi)}{\hom_{\alg}(A, B)}{\hom_{\alg}(A, C)}{\text{\hspace{1.5cm}}\quad\psi}{\varphi\circ\psi.\quad\text{\hspace{1cm}}}
\end{align*} 
An \textit{affine $\K$-superscheme} is a super $\K$-functor isomorphic to some $\sp\,A$. In such case, $A$ is called the \textit{coordinate superalgebra} of the superscheme $\bX=\sp\,A$, and is also denoted by $\K[\bX]$ (\cite[p.719]{zubkov2009affine}). 
\end{definition}

Let $\falg$ be the full subcategory of $\alg$ consisting of the finite-dimensional $\K$-superalgebras.
 
\begin{definition}
A \textit{formal super $\K$-functor} is a  covariant functor $\falg\to\sets$. The category of formal super $\K$-functors is denoted by $\SfF$.   
\end{definition}

\begin{remark} The inclusion functor $\falg\to\alg$ gives rise a functor  

\[
\function{\widehat{ }\,}{\SF}{\SfF\quad\text{\hspace{1.5cm}}}{\bX}{\function{\widehat{\bX}}{\falg}{\sets}{A}{\bX(A),}}
\]
called the \textit{completation functor}. The functors $\mathcal{S}$ and $\mathcal{B}$ may be restricted to $\mathsf{Algf}_\K$ and $\mathsf{SAlgf_\K}$, respectively. Hence, both bosonic reduction $\mathfrak{B}$ and functor $\mathfrak{S}$ may be defined on formal super $\K$-functors. By abuse of notation, we retain the names $\mathfrak{B}$ and $\mathfrak{S}$ for them, which remain adjoint to each other.  \end{remark}
 
\begin{definition}
Let $A\in\falg$. Define the super formal $\K$-functor $\spf_\K$ (or simply $\spf$) as

\[
\function{\spf_\K}{\falg}{\sets\quad\text{\hspace{1.5cm}}}{R}{\hom_{\falg}(A, R).}
\] or equivalently, $\spf A:=\widehat{\sp\,A}.$
\end{definition}

The following proposition follows from Yoneda's lemma (\cite[Lemma 1.1]{masuoka2011quotient}).   

\begin{proposition}\label{Prop.Hom.SSpf.X.is.X.A}
    If $\bX\in\SfF$ and 
    $A\in\falg$, then $\mathrm{Hom}_{\SfF}(\spf A, \bX)\simeq\bX(A)$.
\end{proposition}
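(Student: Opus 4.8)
The plan is to recognize $\spf A$ as the covariant representable functor $\hom_{\falg}(A, -)$ on the category $\falg$ and then invoke the covariant form of Yoneda's lemma inside the functor category $\SfF = \mathsf{Fun}(\falg, \sets)$. Indeed, by definition $\spf A$ sends $R \mapsto \hom_{\falg}(A, R)$ and a morphism $\varphi : R \to R'$ to post-composition $\psi \mapsto \varphi \circ \psi$; this is exactly the functor corepresented by $A$, so the claim is an instance of Yoneda once this identification is made explicit.

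First I would construct the map $\Phi : \hom_{\SfF}(\spf A, \bX) \to \bX(A)$ by evaluating a natural transformation $\eta : \spf A \to \bX$ at the distinguished element $\id_A \in \hom_{\falg}(A, A) = (\spf A)(A)$; that is, $\Phi(\eta) = \eta_A(\id_A)$. Conversely, I would construct $\Psi : \bX(A) \to \hom_{\SfF}(\spf A, \bX)$ sending $x \in \bX(A)$ to the transformation whose component at $R \in \falg$ is $(\Psi(x))_R : \varphi \mapsto \bX(\varphi)(x)$ for $\varphi \in \hom_{\falg}(A, R)$.

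Next I would verify that $\Psi(x)$ is genuinely natural, which reduces to the functoriality identity $\bX(\varphi' \circ \varphi) = \bX(\varphi') \circ \bX(\varphi)$, and that $\Phi$ and $\Psi$ are mutually inverse. The identity $\Phi(\Psi(x)) = \bX(\id_A)(x) = x$ is immediate, while $\Psi(\Phi(\eta)) = \eta$ follows by applying the naturality square for $\eta$ to an arbitrary $\varphi : A \to R$, which gives $\eta_R(\varphi) = \eta_R\big((\spf A)(\varphi)(\id_A)\big) = \bX(\varphi)(\eta_A(\id_A))$, exactly the component of $\Psi(\Phi(\eta))$ at $R$.

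Since this is the standard Yoneda argument, no genuine obstacle arises; the only point requiring care is bookkeeping the direction of the (covariant) representable functor and checking that the bijection is natural in $A$ and in $\bX$ if one wants naturality in both arguments. This is precisely the content of \cite[Lemma 1.1]{masuoka2011quotient}, which I would cite for the formal statement.
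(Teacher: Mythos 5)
Your proposal is correct and follows exactly the paper's route: the paper simply observes that the statement is an instance of Yoneda's lemma, citing the same reference \cite[Lemma 1.1]{masuoka2011quotient}, and you have merely written out the standard bijection $\eta \mapsto \eta_A(\id_A)$ and its inverse in full detail. No discrepancy.
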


Let $\K[X|Y]$ consists of polynomials in the even indeterminate $X$ and odd indeterminate $Y$, with coefficients in $\K$. If $\bX$ is  a formal super $\K$-functor and $\mathbb{A}^{1\mid 1}_\K:=\sp\,\K[X|Y]$, a \textit{regular function} on $\bX$ is a morphism $\f:\bX\to\mathbb{A}^{1\mid 1}$. We denote $\O(\bX)$ the set of regular functions on $\bX$, which is endowed with an obvious $\K$-superalgebra structure. 

\subsection{Superschemes}

\begin{definition}
    Let  $E$ be a set of regular functions on $\bX$. We define the two subfunctors $\V(E)$ and $\D(E)$ of $\bX$ as

    \begin{align*}
     \V(E)(A)&=\{x\in\bX(A)\mid f(x)=0\text{ for all }f\in E\},\text{ and }\\   
    \D(E)(A)&=\{x\in\bX(A)\mid f(x)\text{ (for }f\in E)\text{ generate the unit ideal of }A\}.
    \end{align*} 
\end{definition}

\begin{remark}
    Let $\bX, \bY$ be super $\K$-functors and  $g:\bY\to\bX$ a morphism of super $\K$-functors. Let $F=\{f\circ g\mid f\in E\}\subseteq\O(\bY)$. Then 

\begin{align*}
g^{-1}(\V(E))&=\V(F),\text{ and }\\ 
g^{-1}(\D(E))&=\D(F).
\end{align*}  

Moreover, if $\bX$ is an affine $\K$-superscheme, say $\bX=\sp\,A$ and $I\subseteq \O(\bX)^h$, then for all $R\in\alg$, we have 

\begin{align*}
    \V(I)(R)&=\{\varphi\in\hom_{\alg}(A, R)\mid\varphi(I)=0\} \simeq\hom_{\alg}(A/IA, R).
\end{align*}
In particular, $\V(\a)=\sp  (A/\a)$, for any superideal $\a$ of $A$. Also, if $\bX=\D(\{f\})$, then

\begin{align*}
    \bX_f(A)&:=\D(\{f\})(A)\\
    &=\{\varphi\in\hom_{\alg}(A, R)\mid\varphi(f)\text{ is invertible}\}\\
    &\simeq\hom_{\alg}(A[f\ev^{-1}], R);
\end{align*} 
that is, $\K[\bX_f]=A_f=\K[\bX]_{f}$ (cf.  \cite[\S2]{zubkov2009affine} and \cite[I, \S3]{demazure2006lectures}).
\end{remark}

\begin{definition}
    A subfunctor $\bY$ of $\bX$ is \textit{closed} (resp. \textit{open})  if for any morphism $g:\mathfrak{T}\to\bX,$ where $\mathfrak{T}$ is an affine superscheme, the subfunctor $g^{-1}(\bY)$ of $\mathfrak{T}$ is of the form $\V(E)$ (resp. $\D(F)$).

    A collection of open subfunctors $\{\bY_i\}_{i\in I}$ of a super $\K$-functor $\bX$ is called an \textit{open covering} whenever $$\bX(A)=\bigcup\limits_{i\in I}
\bY_i(A)$$ for any field extension $A$ of $\K$.  
 \end{definition}

As customary, the symbol $(\alg)^{\circ}$ denotes the 
opposite category of $\alg$ which is isomorphic to the category of affine superschemes over $\K$. 

\begin{definition}[See p.142 in \cite{masuoka2011quotient}]
    We define a \textit{Grothendieck topology} $T_{\mathrm{loc}}$ in $(\alg)^{\circ}$ as follows. A covering in $T_{\mathrm{loc}}$ is defined to be a collection of finitely many morphisms $$\{\sp\, R_{f_i}\to \sp\, R\}_{1\leq i\leq n},$$ where $R\in \alg$ and the $f_1, \ldots, f_n\in R\ev$ are such that $\sum_{1\leq i\leq n}R\ev f_i=R\ev;$ that is, the $f_i$ are a \textit{partition of the unity}.  Note that each $\sp\, R_{f_i}\to \sp\, R$ is an isomorphism onto $\D(R_{f_i})$ and the $\D(R_{f_i})$ form an open covering of $\sp\, R$. 
\end{definition}

\begin{definition}[Definition 3.2 in \cite{masuoka2011quotient}]\label{def-superscheme}\textit{ }

\begin{enumerate}
    \item[i)] A sheaf $\bX$ on $T_{\mathrm{loc}}$ is called a \emph{local functor}. (Then, any affine superscheme is a local functor.) 
    \item[ii)] A local super $\K$-functor $\bX$ is called a \emph{superscheme} provided $\bX$ has an open covering $\{\bY_i\}_{i\in I}$ with $\bY_i\simeq \sp\, A_i$, with $A_i\in \alg$. The full subcategory of all superschemes in $\SF$ is denoted by $\ss$. 
    \item[iii)] A superscheme $\bX$ is said to be \emph{Noetherian}, if it has an open covering $\{\bY_i\}_{i\in I}$ with $\bY_i\simeq \sp\, A_i$, as above, such that $I$ is finite, and each $A_i$ is Noetherian. Observe that an affine superscheme $\sp\, A$ is Noetherian if and only if $A$ is Noetherian.  
    \item[iv)] A $\K$-superscheme $\bX$ is said to be \textit{finite} if it is affine and $\O(\bX)$ is finite-dimensional.
\end{enumerate}

\end{definition}

 \begin{remark}\label{rem.3.15}  By \Cref{Prop.Hom.SSpf.X.is.X.A}, the completation functor, restricted to the full subcategory consisting of finite $\K$-superschemes, is fully faithful. Hence, we may see the category of finite $K$-superschemes as a full subcategory of $\SF$ or of $\SfF$ (\cite[p.8]{demazure2006lectures}).
\end{remark}
 
\begin{definition}[See pp.~8 and 12 in \cite{demazure2006lectures}, and p.~4090 in \cite{takahashi2024quotients}] \label{def.formal.schem}  A \textit{formal $\K$-superscheme} $\bX$ is a formal super $\K$-functor isomorphic to the inductive limit of a filtered inductive system of finite affine $\K$-superschemes. In other words, there is a filtered inductive system of finite affine $\K$-superschemes $(\bX_\lambda, u_{\lambda\mu})$, where each $\bX_\lambda$ is of the form $\spf\, A_\lambda$ with $A_\lambda \in \falg$, and the transition morphisms $u_{\lambda\mu} : \bX_\lambda \to \bX_\mu$ for $\lambda < \mu$ are closed immersions (i.e., the corresponding morphisms $A_\mu \to A_\lambda$ of $\K$-superalgebras are surjective).  
\end{definition}  

\subsection{The underlying topological space} 

We now compare the previous constructions with  geometric superspaces (cf. \cite[\S4]{demazure2006lectures}). We first recall the following from \cite[\S\S1-5]{masuoka2011quotient}. For the definition of geometric superspace, see \cite[\S4]{masuoka2011quotient}. 

Let $\mathcal{A}$ be a category. For an object $A \in \mathcal{A}$, we define the functor $h^A$  by $B \mapsto \mathrm{Hom}_{\mathcal{A}}(A, B)$ for each $B \in \mathcal {A}$. The category of covariant functors from $\mathcal{A}$ to $\mathsf{Sets}$ is denoted by $\mathsf{Sets}^{\mathcal{A}}$. 

Let $F\in \mathsf{Sets}^{\mathcal{A}}$. Now, consider the category $\mathcal{M}_F$ whose objects are 
pairs $(A, a)$, where $A\in \mathcal{A}$ and $a\in F(A)$; and morphisms $(A, a) \to (B, b)$ given by morphisms $\phi$ in $\mathrm{Hom}_{\mathcal{A}}(A, B)$ with 
$F(\phi)(a)=b$. Let $\mathcal{V}$ be the category of geometric superspaces. Now we define the functor $d_\bX : (\mathcal{M}_\bX)^{\circ} \to\mathcal{V}$ as follows 

\begin{align*}
    (A, a) &\mapsto\sspec\ A\\
    \phi:A\to B&\mapsto \function{\sspec(\phi)}{\sspec\ B}{\sspec A}{\p}{\phi^{-1}(\p).}
\end{align*}

By \cite[Lemma 4.3]{masuoka2011quotient}, $\underrightarrow{\lim} \ d_\bX$ exists and corresponds to a geometric superspace. It is called a {\it geometric realization} of $\bX$ and denoted by $|\bX|$. 

Given $A\in\alg$, we define $\bX^{ \lozenge}(A):=\mathrm{Hom}_{\mathcal{V}}(\sspec\,A, \bX)$. Thus, we have the following comparison theorem: 

\begin{proposition}[Theorem 5.14 in \cite{masuoka2011quotient}]
    The functors $\bX\to|\bX|$ and $\bX\mapsto\bX^{\lozenge}$ define equivalences of $\mathsf{SSch}_\K$ and the category of full subcategory of $\mathcal{V}$ consisting of geometric superschemes, $\mathcal{SV}$. Furthermore, these functors are quasi-inverses of each other.
\end{proposition}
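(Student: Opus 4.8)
The plan is to exhibit the pair $|\cdot|\colon\SF\to\mathcal{V}$ and $(\cdot)^\lozenge\colon\mathcal{V}\to\SF$ as an adjunction, and then to prove that its unit and counit restrict to isomorphisms on $\mathsf{SSch}_\K$ and on $\mathcal{SV}$ respectively, the whole matter being reduced to the affine case.

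First I would establish the adjunction $|\cdot|\dashv(\cdot)^\lozenge$. Since $|\bX|=\varinjlim d_\bX$ is by construction a colimit indexed by $(\mathcal{M}_\bX)^\circ$, mapping out of it converts into a limit: for any geometric superspace $S$,
\[
\hom_\mathcal{V}(|\bX|, S)\cong\varprojlim_{\mathcal{M}_\bX}\hom_\mathcal{V}(\sspec A, S)=\varprojlim_{\mathcal{M}_\bX} S^\lozenge(A),
\]
the limit being taken over the category of elements $\mathcal{M}_\bX$. By the standard description of natural transformations out of a set-valued functor in terms of its category of elements, the right-hand side is naturally isomorphic to $\hom_\SF(\bX, S^\lozenge)$. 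This yields the adjunction, hence a unit $\eta_\bX\colon\bX\to|\bX|^\lozenge$ and a counit $\varepsilon_S\colon|S^\lozenge|\to S$. The proposition then amounts to showing that $\eta_\bX$ is an isomorphism for every $\bX\in\mathsf{SSch}_\K$ and that $\varepsilon_S$ is an isomorphism for every $S\in\mathcal{SV}$.

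Next I would dispose of the affine case. Since $(A,\id_A)$ is initial in $\mathcal{M}_{\sp A}$, the colimit defining $|\sp A|$ collapses to its value at that object, giving a canonical isomorphism $|\sp A|\simeq\sspec A$. On the space side one must verify the affine comparison, namely that $\sspec$ is fully faithful as a functor $(\alg)^\circ\to\mathcal{V}$, that is,
\[
\hom_\mathcal{V}(\sspec B, \sspec A)\cong\hom_\alg(A, B);
\]
granting this, $(\sspec A)^\lozenge\simeq\sp A$ and $\eta_{\sp A}$ is an isomorphism. This step is the crux: it requires that a morphism of \emph{locally ringed} superspaces between affine super-spectra be recovered from, and correspond exactly to, its action on global sections, using the local-homomorphism condition on stalks and the fact that the global sections of the structure sheaf of $\sspec A$ reconstruct $A$, including its odd part.

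Finally I would globalize by gluing. By definition a superscheme $\bX$ is a sheaf for the topology $T_{\mathrm{loc}}$ equipped with an affine open covering $\{\sp A_i\}$, and dually a geometric superscheme $S\in\mathcal{SV}$ carries an affine open covering by super-spectra. One checks that $|\cdot|$ sends open subfunctors to open subspaces and coverings to coverings, and that $\eta_\bX$ and $\varepsilon_S$ are computed locally on these covers; they are then isomorphisms because they are so on each affine piece by the previous step, while the source and target both satisfy the sheaf condition needed to assemble the local isomorphisms into a global one. The principal obstacle I expect is the affine comparison of the third step: the adjunction and the gluing are purely formal, but identifying morphisms of locally ringed superspaces with superalgebra homomorphisms genuinely uses the super-analogue of the structure sheaf on $\sspec A$ and the reconstruction of $A$ from global sections, precisely the local analysis carried out in \cite{masuoka2011quotient}.
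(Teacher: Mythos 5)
The paper offers no proof of this statement---it is imported verbatim as Theorem 5.14 of Masuoka--Zubkov---so the only meaningful comparison is with the cited source, whose argument your outline reproduces: the adjunction $|\cdot|\dashv(\cdot)^{\lozenge}$ obtained from the colimit presentation of $|\bX|$, the collapse of $|\sp\,A|$ to $\sspec\,A$ via the initial object $(A,\id_A)$ of $\mathcal{M}_{\sp\,A}$, the full faithfulness of $\sspec$ on affines, and gluing over affine open covers. Your plan is sound and correctly isolates the affine comparison (recovering a superalgebra map from a morphism of locally ringed superspaces) as the step carrying the real content.
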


Let $\bX$ be a (formal) $\K$-superscheme. By abuse of notation, we use the symbol $|\bX|$ for the topological space of $\bX$.

\begin{remark}
   Let $\bX=\underrightarrow{\lim}\,\bX_i$ be a formal superscheme. The adjunction between $\mathfrak{S}$ and  $\mathfrak{B}$, implies that $\bX_{\mathrm{bos}}=\underrightarrow{\lim}\,\bX_{i,\, \mathrm{bos}}$, so $\vert\bX_{\mathrm{bos}}\vert=\lim \vert \bX_{i,\mathrm{bos}}\vert.$ 
\end{remark}
 
\subsection{The definition of $\sp^*$}\label{sect.3.4}  Let \(\A \) be a super-cocommutative super-coalgebra and consider any element \( R \in \falg \). Note that \( R \otimes \A \) is a super-coalgebra (over $R$). We define  
\[
G_R(R \otimes \A) = \left\{ u \in (R \otimes \A)\ev \mid \Delta(u) = u \otimes_R u \text{ and } \epsilon(u) = 1 \right\}.
\]  
These elements are called \textit{group-like}. (Note that we must only allow even elements to be group-like, because the coproduct and the counit must preserve the grading; thus, an odd or non-homogeneous element cannot be group-like under this definition.) If $R=\K$, we simply write $G(\A)$. The assignment \[ R \mapsto G_R(R \otimes \A)\] gives rise to a formal superscheme $\sp^*\A$. 

Given $A, R\in\falg$, consider $\Delta_{A^*}$ and $\epsilon_{A^*}$ the structure maps of $A^*$. Then, we may extend then to $R$-linear maps $\Delta:R\otimes A^*\to(R\otimes A^*)\otimes_R(R\otimes A^*)$ and $\epsilon:R\otimes A^*\to R$, respectively. The maps $\Delta$ and $\epsilon$ endow $R\otimes A^*$ with a  $R$-super-coalgebra structure. Furthermore, each $\Z_2$-graded linear map $A\to R$ corresponds uniquely with an element in $(R\otimes A^*)\ev$ such that $\Delta(u)=u\otimes u$ and $\epsilon(u)=1$. Thus, there is functorial isomorphism $$\sp\,A(R)\simeq\{u\in (R\otimes A^*)\ev\mid\Delta(u)=u\otimes_R u\text{ and }\epsilon(u)=1\}=\sp^*A^*(R).$$
(This identification is only possible because $R$ is finite-dimensional. This is one of the main points for defining $\sp^*$ on $\falg$ instead of $\alg$.) As in the usual non-super case (\cite[Theorem 1.1]{takeuchi1977formal}), the following proposition holds: 

\begin{proposition} \label{3.16}
The functor $\A\mapsto\sp^*\A$ is an equivalence from the category of super-coalgebras to that full subcategory of the category of super $\K$-functors which consist of all formal superschemes. 
\end{proposition}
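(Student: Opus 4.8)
The plan is to mimic Takeuchi's proof of \cite[Theorem~1.1]{takeuchi1977formal}, reducing every assertion to the finite-dimensional case---where \Cref{the.1.2.intr} ii) identifies $\sp^*\A$ with an affine $\spf$---and then transporting the result across filtered colimits. The three things to establish are: $\sp^*$ lands in the category of formal superschemes, it is fully faithful, and it is essentially surjective.

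First I would check that $\sp^*$ takes values in formal superschemes. The essential input is the super-analogue of the fundamental theorem of coalgebras (cf.\ \cite{sweedler1969hopf}): every $\A\in\mathsf{SCoalg}_\K$ is the filtered union of its finite-dimensional sub-super-coalgebras $\A_\lambda$, the structure maps being injective morphisms of super-coalgebras. Since each $R\in\falg$ is finite-dimensional, $R\otimes(-)$ commutes with this colimit, and a group-like element of $R\otimes\A$ has finite support and already satisfies $\Delta(u)=u\otimes_R u$, $\epsilon(u)=1$ inside some $R\otimes\A_\lambda$; hence $\sp^*\A=\varinjlim_\lambda \sp^*\A_\lambda$. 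Each $\A_\lambda$ being finite-dimensional, $A_\lambda:=\A_\lambda^*$ lies in $\falg$ and \Cref{the.1.2.intr} ii) gives $\sp^*\A_\lambda\simeq\spf A_\lambda$, a finite affine superscheme; an inclusion $\A_\lambda\hookrightarrow\A_\mu$ dualizes to a surjection $A_\mu\to A_\lambda$, so the transition maps $\spf A_\lambda\to\spf A_\mu$ are closed immersions. Thus $\sp^*\A$ is a formal superscheme, and covariance of $\sp^*$ is immediate from functoriality of the group-like construction.

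For full faithfulness I would reduce to a finite-dimensional duality lemma. With $\A=\varinjlim_\lambda\A_\lambda$ as above, the universal property of colimits and \Cref{Prop.Hom.SSpf.X.is.X.A} give
\[
\hom_{\SfF}(\sp^*\A,\sp^*\B)\simeq\varprojlim_\lambda\hom_{\SfF}(\spf A_\lambda,\sp^*\B)\simeq\varprojlim_\lambda(\sp^*\B)(A_\lambda)=\varprojlim_\lambda G_{A_\lambda}(A_\lambda\otimes\B),
\]
while $\hom_{\mathsf{SCoalg}_\K}(\A,\B)\simeq\varprojlim_\lambda\hom_{\mathsf{SCoalg}_\K}(\A_\lambda,\B)$. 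It therefore suffices to produce, naturally in the finite-dimensional $\A_\lambda$ and in $\B$, an isomorphism
\[
\hom_{\mathsf{SCoalg}_\K}(\A_\lambda,\B)\simeq G_{A_\lambda}(A_\lambda\otimes\B),\qquad A_\lambda=\A_\lambda^*.
\]
Both sides preserve filtered colimits in $\B$ (coalgebra maps out of the finite-dimensional $\A_\lambda$ factor through finite-dimensional sub-super-coalgebras of $\B$, and group-like elements of $A_\lambda\otimes\B$ have finite support), so one may take $\B$ finite-dimensional; in that case the isomorphism is precisely the duality $\hom_{\falg}(A,R)\simeq G_R(R\otimes A^*)$ recorded just before the statement, read after the relabelling $A=\B^*$, $R=A_\lambda$, together with the finite-dimensional identification $\hom_\K(\A_\lambda,\B)\simeq\A_\lambda^*\otimes\B=A_\lambda\otimes\B$.

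Essential surjectivity then comes for free by reversing the first step: a formal superscheme $\bX=\varinjlim_\lambda\spf A_\lambda$ with closed-immersion transitions corresponds to surjections $A_\mu\to A_\lambda$; setting $\A_\lambda:=A_\lambda^*$ yields coalgebra injections $\A_\lambda\hookrightarrow\A_\mu$, and $\A:=\varinjlim_\lambda\A_\lambda$ satisfies $\sp^*\A=\varinjlim_\lambda\sp^*\A_\lambda\simeq\varinjlim_\lambda\spf A_\lambda\simeq\bX$. I expect the main obstacle to be the finite-dimensional duality lemma and, above all, its \emph{naturality}: one must verify with the Koszul sign conventions that under $\hom_\K(\A_\lambda,\B)\simeq A_\lambda\otimes\B$ the two super-coalgebra-morphism equations translate exactly into $\Delta(u)=u\otimes_{A_\lambda}u$ and $\epsilon(u)=1$, and that the parity bookkeeping forces group-like elements to be even, so that the bijection is compatible with the transition maps and with morphisms in $\B$. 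The colimit/limit interchanges, by contrast, are purely formal.
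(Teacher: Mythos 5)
Your proposal is correct and is essentially the paper's own (implicit) proof: the paper offers no written argument for this proposition, simply asserting that it holds ``as in the usual non-super case'' of Takeuchi's Theorem~1.1 after recording the finite-dimensional duality $\sp\,A(R)\simeq\sp^*A^*(R)$ for $A,R\in\falg$, and your reduction to finite-dimensional sub-super-coalgebras followed by dualization of the directed system is exactly that argument carried out in detail. The one ingredient you invoke that the paper leaves tacit --- the super version of the fundamental theorem of coalgebras, namely that every super-coalgebra is the filtered union of its finite-dimensional sub-super-coalgebras --- does hold in this setting and is the correct bridge between the affine duality and the general statement.
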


Observe that if $\O_*$ is the quasi-inverse functor of $\sp^*$, then $\bX\simeq\sp^*(\O_*(\bX))$ for any formal superscheme $\bX$.

It is worth noting that the functor $\sp^*$ can also be defined on the category of super-coalgebras. Specifically, given a super-coalgebra $\A$, the functor $\sp^*$ maps any super-coalgebra $\B$ to the set $\hom_{\mathsf{SCoalg}_\K}(\A, \B)$ (that can also be identified with the group-like elements in $(\A\otimes \B)\ev$). From this point of view, there also exists a natural relationship between $\sp$ and $\sp^*$, as stated in the following proposition, whose proof follows the same reasoning as that of \cite[Theorem 6.0.5]{sweedler1969hopf} (cf. \Cref{remark.adj.*.and.circ}).

\begin{proposition}
    If $R$ is a supercommutative superalgebra and $\A$ is a super-cocommutative super-coalgebra, then we have a bijection 
   $\sp\,R(\A^*)\simeq\sp^* R^\circ(\A).$
\end{proposition}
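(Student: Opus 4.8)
The plan is to recognize this statement as the super-graded incarnation of the adjunction between $(-)^*$ and $(-)^\circ$ recorded in \Cref{remark.adj.*.and.circ}, i.e.\ the super analogue of \cite[Theorem~6.0.5]{sweedler1969hopf}, and to exhibit the bijection through a common ``pairing'' description. First I would confirm that both sides are well posed and translate them into hom-sets. Since $\A$ is super-cocommutative, $\A^*$ is a supercommutative $\K$-superalgebra, so $\sp\,R(\A^*)=\hom_\alg(R,\A^*)$ is meaningful; dually, because $R$ is supercommutative its finite dual $R^\circ$ is super-cocommutative, hence $R^\circ\in\mathsf{SCoalg}_\K$ and, by the coalgebraic description of $\sp^*$, the right-hand side is the set of super-coalgebra morphisms $\A\to R^\circ$. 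The proposition thus reduces to a natural bijection
\[
\hom_\alg(R,\A^*)\;\simeq\;\hom_{\mathsf{SCoalg}_\K}(\A,R^\circ).
\]

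Next I would set up the intermediary. Consider the set of homogeneous $\K$-bilinear pairings $\langle\,,\rangle\colon R\otimes\A\to\K$ subject to
\[
\langle rr',a\rangle=\sum(-1)^{|r'|\,|a_{(1)}|}\langle r,a_{(1)}\rangle\langle r',a_{(2)}\rangle,\qquad \langle 1_R,a\rangle=\epsilon(a),
\]
for homogeneous $r,r'\in R$ and $a\in\A$. Given an algebra morphism $f\colon R\to\A^*$, putting $\langle r,a\rangle:=f(r)(a)$ turns multiplicativity of $f$ into the first identity (the product of $\A^*$ being the super-convolution dual to $\Delta_\A$, which is where the Koszul sign originates) and unitality into the second; this assignment is clearly reversible. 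Symmetrically, the same pairing yields a $\K$-linear map $g\colon\A\to R^*$, $g(a)(r):=\langle r,a\rangle$, and the two identities say exactly that $g$ is compatible with comultiplication and counit once its image is known to sit inside the sub-super-coalgebra $R^\circ\subseteq R^*$ (whose structure maps are dual to the product and unit of $R$). Tracking the signs is the only overhead compared with the classical proof, and it is dictated entirely by the Koszul rule.

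The step I expect to be the real obstacle is showing that the transpose $g$ actually factors through $R^\circ$, so that it defines an element of $\hom_{\mathsf{SCoalg}_\K}(\A,R^\circ)=\sp^*R^\circ(\A)$ rather than landing only in $R^*$. For this I would invoke the super fundamental theorem of coalgebras: every $a\in\A$ lies in a finite-dimensional sub-super-coalgebra $\mathfrak{C}\subseteq\A$, which is exactly where the coradical-filtration and local-finiteness machinery of \Cref{Dual:Krull} enters. Restricting $f$ to this setting gives an algebra morphism $R\to\mathfrak{C}^*$ into a finite-dimensional superalgebra, whose kernel is therefore a cofinite superideal of $R$; since $g(a)$ vanishes on that kernel, $g(a)\in R^\circ$ by definition of the finite dual. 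With membership in $R^\circ$ secured, the two pairing identities upgrade $g$ to a morphism of super-coalgebras, and naturality in $R$ and $\A$ follows formally from the functoriality of $(-)^*$ and $(-)^\circ$, completing the argument.
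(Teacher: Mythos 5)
Your proposal is correct and follows exactly the route the paper takes: the paper's ``proof'' is simply the observation that the statement is the super analogue of \cite[Theorem~6.0.5]{sweedler1969hopf}, i.e.\ the adjunction $\hom_{\alg}(R,\A^*)\simeq\hom_{\mathsf{SCoalg}_\K}(\A,R^\circ)$ recorded in \Cref{remark.adj.*.and.circ}, which is precisely what you prove. You in fact supply more detail than the paper does, correctly isolating the only nontrivial points (the Koszul sign in the convolution product and the factorization of the transpose through $R^\circ$ via finite-dimensional sub-super-coalgebras).
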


\begin{remark} 
  The bosonic reduction can be seen in terms of super-coalgebras. Indeed, consider   $\bX=\sp^* \A$. Then $\bX_{\mathrm{bos}}=\sp^*(\,\A/\A\od\,)$, because $\A=\lim R^*_i$ implies that $\A/\A\od=\lim R^*_{i}/(R_i^*)\od$. 
\end{remark}

\begin{remark} 
    We have seen that the notion of a formal superscheme can be approached both from the perspective of formal super $\K$-functors and from that of super-coalgebras. There are also two additional approaches: one characterizes formal superschemes as those formal super $\K$-functors which commute with finite projective limits, and the other is based on profinite topological $\K$-superalgebras. Indeed, a topological $\K$-superalgebra that is the inverse limit of discrete quotients which are finitely generated $\K$-superalgebras is said to be \textit{profinite}. If $A$ is a profinite $\K$-superalgebras, we can define $\mathrm{PSSpf}\,A(R)$ to be the set of all continuous morphisms of topological $\K$-superalgebras from $A$ to $R$, for any $R\in\falg$ endowed with the discrete topology. Then, if $\{A_i\}$ is the family of discrete finitely generated quotients of $A$ defining its topology, we have $$\mathrm{PSSpf}\,A(R) = \varinjlim\, \mathrm{PSSpf}\, A_i(R),$$ and $\mathrm{PSSpf}\,A$ is a formal $\K$-superscheme. As in the purely even case (\cite[I, \S6, b)]{demazure2006lectures}), the assignment $ A \mapsto \mathrm{PSSpf}\, A$  defines an anti-equivalence between the category $\salg$ of profinite $\K$-superalgebras and the category of formal $\K$-superschemes. Since we will not make use of these alternative approaches here, we omit further details.
\end{remark} 

\begin{definition}
    We define 

    \begin{align*}
        \sp^* \A_1\times_{\sp^*\A}\sp^*\A_2&:=\sp^*(\A_1\square_\A \A_2),\\
        \sp^* \A_1\times\sp^*\A_2&:=\sp^*(\A_1\otimes \A_2), \quad \text{and}\\
        \coprod\sp^*\A_i&:=\sp^* \sum \A_i.
    \end{align*}  
\end{definition} 

\begin{definition}
    A formal superscheme $\bX$ is said to be \textit{local} if it is isomorphic to a $\spf A$, where $A$ is a local superring. 
\end{definition}

Using \cite[Corollary 5.2.4]{westra2009superrings}, we find the following.

\begin{proposition}[\cite{demazure2006lectures}, p.15]
    Any formal superscheme is a direct sum of local formal superschemes.  
\end{proposition}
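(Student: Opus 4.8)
The plan is to transport the problem to the super-coalgebra side via \Cref{3.16}, decompose the associated coalgebra into its irreducible components, and carry the decomposition back through $\sp^*$. First I would write $\bX \simeq \sp^*\A$ for the super-cocommutative super-coalgebra $\A = \O_*(\bX)$ furnished by the equivalence in \Cref{3.16}. Under this equivalence the assertion ``$\bX$ is a direct sum of local formal superschemes'' becomes the purely coalgebraic statement that $\A$ decomposes as a direct sum $\bigoplus_i \A_i$ of irreducible sub-super-coalgebras: indeed, the very definition of the coproduct of formal superschemes records that $\coprod_i \sp^*\A_i = \sp^*\bigl(\sum_i \A_i\bigr)$, so a coalgebra direct sum is sent to a direct sum of formal superschemes.

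Next I would produce the decomposition of $\A$. The coradical $\cod\,\A$ is by definition the direct sum $\bigoplus_i \mathbf{S}_i$ of the simple sub-super-coalgebras of $\A$. Invoking super-cocommutativity, \cite[Corollary~5.2.4]{westra2009superrings} (the super-analogue of Sweedler's decomposition of cocommutative coalgebras) attaches to each $\mathbf{S}_i$ the irreducible component $\A_i := \bigcup_{n\ge 0}\scalebox{1.8}{$\sqcap$}^{n}\mathbf{S}_i$ whose unique simple sub-super-coalgebra is $\mathbf{S}_i$, and yields $\A = \bigoplus_i \A_i$ as super-coalgebras. Each $\A_i$ is again a super-coalgebra, hence $\sp^*\A_i$ is a formal superscheme by \Cref{3.16}.

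Finally I would check that each factor $\sp^*\A_i$ is local. Since $\A_i$ is irreducible it contains a unique simple sub-super-coalgebra $\mathbf{S}_i$; passing to duals, the superring $\A_i^*$ has a unique maximal superideal, namely $\mathbf{S}_i^{\perp}$, using that a sub-super-vector space $\Vs$ is a sub-super-coalgebra if and only if $\Vs^{\perp}$ is a superideal (and that simple sub-super-coalgebras correspond to maximal superideals), exactly as in the proof of \Cref{prop.equiv.fint.type}. Thus $\A_i^*$ is a local superring and $\sp^*\A_i$ corresponds to $\spf\,\A_i^*$ with $\A_i^*$ local, i.e.\ $\sp^*\A_i$ is a local formal superscheme. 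Combining the three steps gives $\bX \simeq \coprod_i \sp^*\A_i$, a direct sum of local formal superschemes.

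The main obstacle is the middle step: passing from the additive decomposition $\cod\,\A=\bigoplus_i \mathbf{S}_i$ of the coradical to a genuine \emph{coalgebra} direct sum $\A=\bigoplus_i \A_i$, which requires knowing that in the super-cocommutative setting the irreducible components $\A_i$ are mutually orthogonal direct summands (so that the sum is direct and each $\A_i$ is an honest sub-super-coalgebra). This is precisely the content secured by \cite[Corollary~5.2.4]{westra2009superrings}, and it is cocommutativity that makes it available; the reduction in the first paragraph and the locality check in the last are then essentially formal.
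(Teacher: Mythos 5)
Your argument is correct, and it lands exactly where the paper does: the decomposition $\A=\bigoplus_i\A_i$ into irreducible components, with each $\sp^*\A_i$ supported at a single point, is precisely what the paper sets up in its subsection on irreducible components. The one substantive difference is the key lemma you lean on. The paper's one-line proof invokes \cite[Corollary~5.2.4]{westra2009superrings}, which (as its other use in the proof of \Cref{Prop.4.6} makes clear) is the structure theorem for Artinian superrings: each finite-dimensional $A_\lambda$ in the defining inductive system $\bX=\varinjlim\spf A_\lambda$ splits as a product of local superrings, and this splitting is compatible with the surjective transition maps, hence passes to the limit. You instead work on the dual side, decomposing the super-cocommutative super-coalgebra $\O_*(\bX)$ into irreducible components and checking that each $\A_i^*$ is local via the correspondence between sub-super-coalgebras and superideals of the dual. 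These are two formulations of the same decomposition; yours has the advantage of staying entirely in the coalgebraic language the paper favours and of making the single-point support of each factor transparent, while the paper's route avoids the coradical machinery altogether. One small correction: the super-analogue of Sweedler's irreducible-component decomposition is \cite[Theorem~9.2.17]{westra2009superrings} (as the paper cites it in the next subsection), not Corollary~5.2.4, which concerns Artinian superrings; and strictly speaking, for infinite-dimensional $\A_i$ the uniqueness of the maximal superideal of $\A_i^*$ should be read as uniqueness among \emph{closed} (equivalently, cofinite) superideals, which is what the profinite/pseudocompact setting supplies and what the paper's definition of a local formal superscheme implicitly means.
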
 

\begin{definition}
    Let $F|\K$ be a field extension. We define the \textit{base-change functor} by $$(\sp^*\A)\otimes_\K F=\sp^*(\A\otimes_\K F).$$
\end{definition}
 
Thus, the fiber product of two formal superschemes is a formal superscheme and the scalar extension functor preserves formal superschemes.

\subsection{Irreducible components} 

Let $\A$ be a super-coalgebra. The \textit{irreducible components} $\{\A_i\}_{i\in I}$ of $\A$ can be defined as in the usual case (\cite[Definition 9.2.16]{westra2009superrings}). Thus, by \cite[Theorem 9.2.17 ii), iii)]{westra2009superrings}, 
\[
\A=\bigoplus_{i\in I} \A_i\]
and we find that \[|\sp^*\A|=\coprod_{i\in I}|\sp^*\A_i|.
\]

Now let $R$ be a local and finite-dimensional superalgebra. Then $\overline{R}$ is also local and finite-dimensional, with dimension given by $\dim_\K R - \dim_\K \mathfrak{J}_R$. In particular, the correspondence $\sspec\, R \simeq \spec\, \overline{R}$ implies that the space $\sspec\, R$ consists of a single point. Consequently, the topological space $|\sp^* \A_i|$ consists of exactly one point. Hence, there is a bijection between the set $|\sp^* \A|$ and the family $\{\A_i\}_{i \in I}$. Following \cite[p.~1488]{takeuchi1977formal}, we refer to each $\sp^* \A_i$ as an \textit{irreducible component} of the formal superscheme $\sp^* \A$.

\newpage

If $\bX$ is a formal superscheme and $x \in |\bX|$, we denote by $\bX_x$ the irreducible component of $\bX$ that contains $x$. Accordingly, we may write
\[
\bX = \coprod_{x \in |\bX|} \bX_x.
\]
We define the super-coalgebra $\O_x := \O_*(\bX_x)$. Its coradical is denoted by $\kappa(x)$. The formal sub-superscheme $\sp^*(\kappa(x))$ of $\bX_x$ is denoted by $\{x\}$.

\section{Morphisms of formal superschemes}\label{sec.morphisms}

In this section, we study the basic properties of morphisms of formal superschemes.

Let $\bX$ and $\bY$ be formal superschemes. A map of formal superschemes $\f:\bX\rightarrow\bY$ induces a continuous function $$\vert \f\vert:\vert \bX\vert\rightarrow \vert \bY\vert$$ and a morphism $$\O_*(\f):\O_*(\bX)\rightarrow \O_*(\bY)$$ of super-coalgebras. Furthermore,  $\vert \bX\vert=\vert\bX_{\mathrm{bos}}\vert$, and we have the following.  

\begin{proposition}\label{f_eq_f_red}
    Let $\f:\bX\rightarrow \bY$ be a morphism of formal superschemes. Then $\vert \f\vert=\vert \f_{\mathrm{bos}}\vert$. 
\end{proposition}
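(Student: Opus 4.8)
The plan is to exhibit the identity $\vert\bX\vert=\vert\bX_{\mathrm{bos}}\vert$ as a \emph{natural} isomorphism between the geometric realization functor and its composite with the bosonic reduction $\mathfrak{B}$, so that the asserted equality $\vert\f\vert=\vert\f_{\mathrm{bos}}\vert$ becomes precisely the naturality square of this isomorphism at $\f$. Recall that $\vert\bX\vert=\varinjlim d_\bX$, where $d_\bX:(\mathcal{M}_\bX)^\circ\to\mathcal{V}$ sends $(A,a)\mapsto\sspec A$; hence a point of $\vert\bX\vert$ is the class of a triple $(A,a,\p)$ with $A\in\falg$, $a\in\bX(A)$ and $\p\in\sspec A$. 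Since $d_\bY(A,\f_A(a))=\sspec A=d_\bX(A,a)$, the morphism $\f$ induces on the comma categories the functor $(A,a)\mapsto(A,\f_A(a))$, acting by the identity on each $\sspec A$, so on realizations
\[
\vert\f\vert:(A,a,\p)\longmapsto(A,\f_A(a),\p);
\]
it keeps the point $\p\in\sspec A$ and only changes the stage through $a\mapsto\f_A(a)$.

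First I would make the identification $\vert\bX\vert=\vert\bX_{\mathrm{bos}}\vert$ explicit via the unit of the adjunction $\mathcal{B}\dashv\mathcal{S}$. For $A\in\falg$ let $q_A:A\to\mathcal{S}(\overline A)$ be the canonical surjection onto the bosonic reduction (this is the adjunction map $f_A$ used earlier). Because $\overline{\mathcal{S}(\overline A)}=\overline A$ and $\overline{q_A}=\id_{\overline A}$, the induced map on underlying spaces $\sspec q_A:\sspec\mathcal{S}(\overline A)=\spec\overline A\to\spec\overline A=\sspec A$ is the identity under the correspondence $\sspec R\simeq\spec\overline R$. Thus in $\mathcal{M}_\bX$ the arrow $q_A:(A,a)\to(\mathcal{S}(\overline A),\bX(q_A)(a))$ identifies $\p$ at the stage $(A,a)$ with $\overline\p$ at the bosonic stage $(\mathcal{S}(\overline A),\bX(q_A)(a))$. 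Writing $\tilde a:=\bX(q_A)(a)\in\bX(\mathcal{S}(\overline A))=\bX_{\mathrm{bos}}(\overline A)$, this is exactly the rule $(A,a,\p)\mapsto(\overline A,\tilde a,\overline\p)$ realizing $\vert\bX\vert=\vert\bX_{\mathrm{bos}}\vert$.

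With both maps described on points, the conclusion follows from naturality of $\f$. By definition $\f_{\mathrm{bos}}=\mathfrak{B}(\f)$ has components $(\f_{\mathrm{bos}})_{\overline A}=\f_{\mathcal{S}(\overline A)}$, so
\[
\vert\f_{\mathrm{bos}}\vert:(\overline A,\tilde a,\overline\p)\longmapsto\bigl(\overline A,\f_{\mathcal{S}(\overline A)}(\tilde a),\overline\p\bigr).
\]
The naturality square of $\f:\bX\to\bY$ along $q_A$ gives $\f_{\mathcal{S}(\overline A)}\circ\bX(q_A)=\bY(q_A)\circ\f_A$, whence $\f_{\mathcal{S}(\overline A)}(\tilde a)=\bY(q_A)(\f_A(a))$. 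Therefore $\vert\f_{\mathrm{bos}}\vert$ sends the point to $(\overline A,\bY(q_A)(\f_A(a)),\overline\p)$, which under the same identification $\vert\bY\vert=\vert\bY_{\mathrm{bos}}\vert$ is precisely $(A,\f_A(a),\p)=\vert\f\vert(A,a,\p)$. This yields $\vert\f\vert=\vert\f_{\mathrm{bos}}\vert$.

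I expect the main obstacle to be the bookkeeping in the second step: one must verify that the homeomorphisms $\sspec R\simeq\spec\overline R$ are natural in $R$ (so that $\sspec q_A$ genuinely is the identity on underlying spaces and the bosonic stages are cofinal in $\varinjlim d_\bX$), and that the comparison $\vert\bX\vert=\vert\bX_{\mathrm{bos}}\vert$ is indeed the one induced stage-wise by the functor $(R,a)\mapsto(\mathcal{S}(R),a)$ between the two index categories. For formal superschemes this also passes through $\vert\bX\vert=\varinjlim\vert\bX_i\vert$, but since each transition is handled functorially the passage to the filtered colimit is harmless. Once this naturality is pinned down, the remaining argument is the one-line application of the naturality of $\f$ above.
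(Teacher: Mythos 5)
Your argument is correct; the paper in fact states this proposition without proof, treating it as an immediate consequence of the identification $\vert\bX\vert=\vert\bX_{\mathrm{bos}}\vert$ noted just before it, and your writeup simply makes that implicit reasoning explicit (cofinality of the purely even stages via the unit $q_A:A\to\mathcal{S}(\overline A)$, plus the naturality square of $\f$ along $q_A$). This matches the intended route, so there is nothing to object to.
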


\subsection{Immersions}

\begin{definition} 
    A morphism $\f:\bX\rightarrow\bY$ of formal superschemes is said to be 

    \begin{enumerate}
        \item[i)] \textit{affine} if for every $\K$-superalgebra $A$ and every morphism $\f:\sp\,A\to \bY$ the fiber product $\sp\,A\times_\bY\bX$ is an affine superscheme (cf. \cite[I, \S2, 5.1]{demazure1980introduction}).  
        \item[ii)] a \textit{closed immersion} if it is a closed immersion as a map of $\bW$-functors. In other words, if it is affine and for every $\K$-superalgebra $A$ and any morphism  $\sp\,A\to \bY$, the induced map $\O_*(\f_{\sp\,A}):\O_*(\sp\,A)\to\O_*(\sp\,A\times_{\bY} \bX)$ is surjective  (cf. \cite[I, \S2, 6.1]{demazure1980introduction}).
        \item[iii)] \textit{surjective} (resp. \textit{strictly surjective}) if $\vert \f\vert$ (resp. $\O_*(\f)$) is surjective. 
    \end{enumerate}
\end{definition}

Thus, by \Cref{f_eq_f_red} we find the following.

\begin{proposition}
    A map of formal superschemes $\f$ is surjective if and only if $\f_{\mathrm{bos}}$ is.
\end{proposition}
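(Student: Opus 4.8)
The plan is to reduce the equivalence entirely to the identification of underlying maps provided by \Cref{f_eq_f_red}. By definition, a morphism of formal superschemes is surjective precisely when its induced continuous map on geometric realizations is surjective. Hence $\f$ is surjective if and only if $|\f|:|\bX|\to|\bY|$ is surjective, and $\f_{\mathrm{bos}}$ is surjective if and only if $|\f_{\mathrm{bos}}|:|\bX_{\mathrm{bos}}|\to|\bY_{\mathrm{bos}}|$ is surjective. So the first move is simply to unwind the definition of surjectivity on both sides and rephrase the claim as an assertion about these two continuous maps.

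The key (and only essential) step is then to invoke \Cref{f_eq_f_red}, which gives $|\f|=|\f_{\mathrm{bos}}|$ as continuous maps, under the canonical identifications $|\bX|=|\bX_{\mathrm{bos}}|$ and $|\bY|=|\bY_{\mathrm{bos}}|$ recorded just before that proposition. Since the two underlying maps coincide, one is surjective exactly when the other is, and the desired equivalence follows at once.

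There is essentially no obstacle here: the entire content has been absorbed into \Cref{f_eq_f_red} and the definition of surjectivity in terms of geometric realizations. The only point worth checking is that $\f_{\mathrm{bos}}$ is itself a morphism of formal superschemes, so that the notion of surjectivity genuinely applies to it; this is guaranteed by the earlier observation that $\bX_{\mathrm{bos}}=\varinjlim\,\bX_{i,\mathrm{bos}}$ is again an inductive limit of finite affine superschemes, so that $|\f_{\mathrm{bos}}|$ is well defined and the definition of surjectivity is meaningful for $\f_{\mathrm{bos}}$.
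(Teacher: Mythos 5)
Your proof is correct and is exactly the paper's argument: the paper derives this proposition directly from \Cref{f_eq_f_red} together with the definition of surjectivity as surjectivity of $|\f|$, using the identification $|\bX|=|\bX_{\mathrm{bos}}|$. Your additional check that $\f_{\mathrm{bos}}$ is a genuine morphism of formal superschemes is a reasonable (if implicit in the paper) point of care.
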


In order to obtain some useful characterizations of the properties introduced above for morphisms of formal superschemes, we will need the following lemma.

\begin{lemma}\label{lemma.equiv.surj.epim}
\ 

\begin{enumerate}
    \item[\rm i)] Let $\phi:A\rightarrow B$ be a morphism of $\K$-superalgebras. If $B$ is finitely generated $A$-supermodule, then $\phi$ is an epimorphism in $\alg$ if and only if it is surjective as a map of sets. 
    \item[\rm ii)]  A map $\psi:\A\rightarrow \B$ of super-cocommutative super-coalgebras is a monomorphism in the category of super-coalgebras if and only if it is injective as a map of sets.
\end{enumerate}     
\end{lemma}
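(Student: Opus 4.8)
The plan is to prove each part by isolating the trivial implication and then doing the real work, reducing part (ii) to part (i) through finite-dimensional duality.

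\emph{Part (i).} That surjectivity implies being an epimorphism is immediate, since two morphisms out of $B$ that agree on $\phi(A)=B$ coincide. For the converse I would use the standard characterization that $\phi$ is an epimorphism in $\alg$ if and only if the two coprojections $i_1,i_2\colon B\to B\otimes_A B$ into the pushout agree, equivalently the multiplication map $B\otimes_A B\to B$ is an isomorphism; the Koszul signs in the super tensor product play no role in this purely formal equivalence. Writing $M=\coker(\phi)$, a finitely generated $A$-supermodule, and tensoring the presentation $A\to B\to M\to 0$ with $B$, the isomorphism $B\otimes_A B\cong B$ forces $M\otimes_A B=0$. The goal is then to deduce $M=0$.

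\emph{Finishing (i).} Since $M$ is finitely generated, it suffices to show $M_{\m}=0$ after localizing $A$ at every maximal ideal $\m\ev$ of $A\ev$; localization is performed at even elements and preserves both finiteness and the epimorphism property. I may thus assume $A$ local with maximal superideal $\m=\m\ev\oplus A\od$ and residue field $\K'=A/\m$. Reducing modulo $\m$ gives an epimorphism $\K'\to C:=B\otimes_A\K'$ of finite-dimensional $\K'$-superalgebras, and the isomorphism $C\otimes_{\K'}C\cong C$ yields $(\dim_{\K'}C)^2=\dim_{\K'}C$, whence $\dim_{\K'}C\in\{0,1\}$, i.e.\ $C=0$ or $C=\K'$. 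In both cases the image of $1$ generates $C$ over $\K'$, so by the super Nakayama lemma (\cite{westra2009superrings}) the element $1$ generates $B$ over $A$, that is $B=\phi(A)$. Hence $M=0$. The only genuinely ``super'' ingredients are super-localization and super-Nakayama, both routine.

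\emph{Part (ii).} Injectivity trivially gives a monomorphism, so I argue the converse by contraposition: assuming $\psi$ is not injective, I build two distinct coalgebra maps equalized by $\psi$. Choose $0\neq a\in\ker\psi$; by the super analogue of the fundamental theorem of coalgebras (\cite[Theorem~2.2.1]{sweedler1969hopf}), $a$ lies in a finite-dimensional sub-super-coalgebra $\mathbf{D}\subseteq\A$, and $\mathbf{E}:=\psi(\mathbf{D})$ is a finite-dimensional sub-super-coalgebra of $\B$, so the corestriction $\overline\psi\colon\mathbf{D}\to\mathbf{E}$ is surjective but not injective. Since $\mathbf{D},\mathbf{E}$ are finite-dimensional and super-cocommutative, their duals $\mathbf{D}^*,\mathbf{E}^*$ are finite-dimensional supercommutative superalgebras with $\mathbf{D}^{**}\cong\mathbf{D}$ and $\mathbf{E}^{**}\cong\mathbf{E}$, and $\overline\psi^*\colon\mathbf{E}^*\to\mathbf{D}^*$ is injective but not surjective by a dimension count. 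As $\mathbf{D}^*$ is a finitely generated $\mathbf{E}^*$-supermodule, part (i) shows $\overline\psi^*$ is not an epimorphism; concretely the coprojections $u,v\colon\mathbf{D}^*\to\mathbf{T}:=\mathbf{D}^*\otimes_{\mathbf{E}^*}\mathbf{D}^*$ satisfy $u\neq v$ yet $u\,\overline\psi^*=v\,\overline\psi^*$. Dualizing back (everything finite-dimensional) produces coalgebra maps $u^*,v^*\colon\mathbf{T}^*\to\mathbf{D}$ with $u^*\neq v^*$ and $\overline\psi\,u^*=\overline\psi\,v^*$; composing with the inclusion $\mathbf{D}\hookrightarrow\A$ gives $f\neq g$ with $\psi f=\psi g$, so $\psi$ is not a monomorphism.

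The step I expect to be most delicate is the finite-dimensional reduction in part (ii): one must guarantee that the failure of $\overline\psi^*$ to be an epimorphism is witnessed by a \emph{finite-dimensional} algebra $\mathbf{T}=\mathbf{D}^*\otimes_{\mathbf{E}^*}\mathbf{D}^*$, so that dualization returns honest coalgebra maps, and that $u^*,v^*$ stay distinct after composing with the inclusion into $\A$. Once this bookkeeping and the identifications $\mathbf{D}^{**}\cong\mathbf{D}$, $\overline\psi^{**}\cong\overline\psi$ are in place, part (i) supplies the substance of the argument.
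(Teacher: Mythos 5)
Your proposal is correct and takes essentially the same route as the paper: part (i) is proved by localizing at maximal ideals, passing to the residue field, using the tensor-square characterization of epimorphisms plus a dimension count, and concluding with super Nakayama (this is exactly the paper's ``Claim'' together with the field-case lemma of Takeuchi that it cites), and part (ii) is reduced to part (i) by finite-dimensional duality and the fundamental theorem of coalgebras, which is the content of Takeuchi's Proposition 1.2.3 invoked by the paper. You have simply unpacked the citations; no new ideas or gaps.
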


\begin{proof}
    We first show the following.

\textbf{Claim.} \textit{Let $R$ be a superring. Let $M$ be an $R$-supermodule, $N$ a finitely generated $R$-supermodule (that is, for some $n_1, \ldots, n_k\in N^h$, we have $N=n_1R+n_2R+\cdots+n_kR$), and $\phi:M\to N$ a morphism of $R$-supermodules. Then, $\phi$ is surjective if and only if for all maximal ideal $\m$ of $R$, the induced morphism $M/\m M\to N/\m N$ be surjective. }

The claim can be shown in a similar manner to \cite[II, \S3, n°3, Proposition 11]{bourbaki1998commutative}, so we only sketch the proof.

The first thing to note is that if $M$ is an $R$-supermodule and $\mathfrak{m}$ is a maximal ideal of $R$, then $M_{\mathfrak{m}}$ can be naturally defined as the localization of $M$ at $\mathfrak{m}\ev$ (see \cite[\S5.1]{westra2009superrings} for the details about localization). Thus, according to \cite[Lemma 5.1.21]{westra2009superrings}, the morphism $\phi:M\to N$ is surjective if and only if $M_\m\to N_\m$ is for all maximal ideal $\m$ of $R$. 

Note that $N_\m$ is a finitely generated $R_\m$-supermodule. Thus, $M_\m\to N_\m$ is surjective if and only if $M_\m/\m M_\m\to N_\m/\m N_\m$ is, by \cite[II, \S3, n°2, Corollary 1 to Proposition 4]{bourbaki1998commutative}. The claim then follows from the isomorphisms $M/\m M\simeq M_\m/\m M_\m$ and $/\m N\simeq N_\m/\m N_\m$ that can be proved similar to those in  \cite[II, \S3, n°3, Proposition 9]{bourbaki1998commutative}.

Since $A/\m$ is a field, in view of the claim, i) follows directly as \cite[lemma in p.8]{takeuchi1974tangent}. Furthermore, ii) follows from i) as in \cite[Proposition 1.2.3]{takeuchi1974tangent}.
\end{proof}
 As a corollary of \Cref{lemma.equiv.surj.epim}~ii), we deduce the following proposition.

\begin{proposition}
    Let $\f$ be a map of formal superschemes. The following conditions are equivalent. 
        \begin{enumerate}
            \item[\rm i)] $\f$ is a monomorphism.
            \item[\rm ii)] $\f$ is a closed immersion.
            \item[\rm iii)] $\O_*(\f)$ is injective.  
        \end{enumerate}  
\end{proposition}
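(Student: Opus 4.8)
The plan is to establish the equivalence $\text{i)} \Leftrightarrow \text{ii)} \Leftrightarrow \text{iii)}$ by exploiting the anti-equivalence $\O_*$ between formal superschemes and super-coalgebras (\Cref{3.16}), which translates everything into statements about the induced super-coalgebra map $\psi := \O_*(\f)$. Since $\sp^*$ and $\O_*$ are quasi-inverse (anti-)equivalences, a map $\f$ is a monomorphism of formal superschemes if and only if $\psi = \O_*(\f)$ is an \emph{epimorphism} in the category of super-cocommutative super-coalgebras. So the first move is to replace the geometric conditions by their coalgebraic duals and reduce the whole proposition to a statement entirely about $\psi$.

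The central tool is \Cref{lemma.equiv.surj.epim}~ii), which says a map of super-cocommutative super-coalgebras is a monomorphism if and only if it is injective as a map of sets. The key observation I would deploy is that the same lemma, read through the contravariant duality $(-)^*$ (cf. \Cref{remark.adj.*.and.circ}), relates injectivity/surjectivity of $\psi$ to surjectivity/injectivity of the dual superalgebra map $\psi^* : \B^* \to \A^*$. Concretely, I would argue: $\text{iii)} \Rightarrow \text{i)}$ is immediate since an injective coalgebra map is automatically a monomorphism of sets and hence a monomorphism in the category. For $\text{i)} \Rightarrow \text{iii)}$, suppose $\f$ is a monomorphism; I would show $\psi$ must be injective. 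The cleanest route is to use \Cref{lemma.equiv.surj.epim}~ii) in contrapositive form together with the local structure: by the decomposition of any formal superscheme into local (irreducible) components (as recorded just before \Cref{sec.morphisms}), and the fact that each component has underlying space a single point, one reduces to checking injectivity on each irreducible piece, where the coradical filtration from \Cref{Dual:Krull} gives a handle.

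For the equivalence with $\text{ii)}$, I would unwind the definition of closed immersion: $\f$ is a closed immersion iff it is affine and for every $\sp\,A \to \bY$ the map $\O_*(\f_{\sp\,A})$ is surjective. Dualizing through $\sp^*$, surjectivity of the relevant super-coalgebra map on every ``finite test object'' corresponds, via the definition of $G_R(R \otimes \A)$ and the fiber-product formula $\sp^*\A_1 \times_{\sp^*\A} \sp^*\A_2 = \sp^*(\A_1 \square_\A \A_2)$, to injectivity of $\psi$ itself. I would make this precise by noting that the cotensor construction computing the fiber product degenerates exactly when $\psi$ is injective, so the condition ``$\O_*(\f_{\sp\,A})$ surjective for all $A$'' collapses to ``$\psi$ injective.'' This gives $\text{ii)} \Leftrightarrow \text{iii)}$ and closes the cycle.

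The main obstacle will be the direction $\text{i)} \Rightarrow \text{iii)}$: being a categorical monomorphism of formal superschemes is \emph{a priori} weaker than injectivity of $\psi$, and proving they coincide is where \Cref{lemma.equiv.surj.epim}~ii) must be invoked with care. The subtlety is that a monomorphism of formal superschemes corresponds to an epimorphism of super-coalgebras under $\O_*$, whereas what I want is injectivity of $\psi$ \emph{as a coalgebra map}; reconciling ``epimorphism of coalgebras'' with the set-theoretic injectivity/surjectivity dictionary requires the super-cocommutativity hypothesis (so that duals are supercommutative, per \cite[Lemma 8.2.11]{westra2009superrings}) and the finiteness built into the coradical filtration. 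I expect the bookkeeping around whether one works with $\psi$ or its dual $\psi^*$ to be the delicate point, and I would resolve it by systematically passing to finite-dimensional sub-super-coalgebras (using that every element lies in one, by \Cref{Dual:Krull}) where $(-)^*$ is an exact honest duality and the set-theoretic criteria of \Cref{lemma.equiv.surj.epim} apply verbatim.
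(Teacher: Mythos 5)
There is a genuine gap, and it lies in the variance of the duality you are using. You assert that $\sp^*$ and $\O_*$ are quasi-inverse \emph{anti}-equivalences, so that a monomorphism $\f$ of formal superschemes would correspond to an \emph{epimorphism} of super-coalgebras. But the equivalence of \Cref{3.16} is \emph{covariant}: a super-coalgebra map $\A\to\B$ induces $G_R(R\otimes\A)\to G_R(R\otimes\B)$ because group-like elements are preserved, and accordingly the paper's $\O_*(\f)\colon\O_*(\bX)\to\O_*(\bY)$ points in the \emph{same} direction as $\f$. (The contravariant duality is the one with profinite super\emph{algebras} via $\mathrm{PSSpf}$, not the coalgebra one.) With the correct variance, a monomorphism of formal superschemes corresponds to a monomorphism of super-cocommutative super-coalgebras, and \Cref{lemma.equiv.surj.epim}~ii) says precisely that such a monomorphism is the same thing as an injective map; this is the entire content of i)~$\Leftrightarrow$~iii), and it is exactly how the paper deduces the proposition (it is stated as an immediate corollary of that lemma). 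Your program of ``reconciling epimorphism of coalgebras with injectivity of $\psi$'' cannot be carried out: epimorphisms of coalgebras need be neither injective nor surjective, and the irreducible-component and coradical-filtration machinery you invoke is patching a difficulty that disappears once the direction of $\O_*(\f)$ is fixed.

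The treatment of ii) is also too loose to stand on its own: the claim that ``the cotensor construction computing the fiber product degenerates exactly when $\psi$ is injective'' is not justified, and the intended argument (as in Takeuchi) is rather that injectivity of $\O_*(\f)$ identifies $\bX$ with the closed subfunctor corresponding to a sub-super-coalgebra of $\O_*(\bY)$, while conversely a closed immersion is in particular a monomorphism, which returns you to i). The correct skeleton is therefore: iii)~$\Rightarrow$~ii) by exhibiting $\bX$ as a subsuperscheme of $\bY$; ii)~$\Rightarrow$~i) because closed immersions are monomorphisms; and i)~$\Rightarrow$~iii) by \Cref{lemma.equiv.surj.epim}~ii) applied covariantly.
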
 

We conclude the present subsection with the following proposition. 

\begin{proposition}\label{Prop.4.6}
    Let $\f:\bX\rightarrow\bY$ be a map of formal superschemes.  The following conditions are equivalent.  
        \begin{enumerate}
            \item[\rm i)] $\f$ is an open immersion; that is, it is a monomorphism and the image-functor is open in $\bY$.
            \item[\rm ii)]  There is a map of formal superschemes $\mathbf{g}:\bW\rightarrow\bY$ such that  $(\f,\mathbf{g}):\bX\coprod \bW\simeq\bY$. 
            \item[\rm iii)] There is a super-coalgebras map $u:\A\rightarrow \O_*(\bY)$ such that $(\O_*(\f),u):\O_*(\bX)\oplus \A\simeq \O_*(\bY)$.      
        \end{enumerate}    
\end{proposition}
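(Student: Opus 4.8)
The plan is to push the whole statement through the equivalence $\sp^*$ of \Cref{3.16} and its quasi-inverse $\O_*$, setting $\B:=\O_*(\bY)$ and $\A:=\O_*(\bX)$, so that $\f$ is $\sp^*$ applied to the super-coalgebra map $\O_*(\f):\A\to\B$. The equivalence of ii) and iii) is then purely formal: coproducts of formal superschemes are by definition $\sp^*$ of direct sums of super-coalgebras, so a decomposition $(\f,\g):\bX\coprod\bW\simeq\bY$ is the same datum as an isomorphism $(\O_*(\f),u):\A\oplus\O_*(\bW)\simeq\B$ with $u:=\O_*(\g)$; full faithfulness of $\O_*$ makes the compatibility of the structure maps automatic, and applying $\sp^*$ reverses the passage. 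I would record this first and then treat ii) $\Leftrightarrow$ iii) as settled, reducing the proposition to the equivalence of i) and ii).

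For ii) $\Rightarrow$ i): if $\bY\simeq\bX\coprod\bW$ then $\O_*(\f)$ is the inclusion of a direct summand, hence injective, so $\f$ is a monomorphism by \Cref{lemma.equiv.surj.epim} ii) together with \Cref{3.16}. For the openness of the image I would invoke the decomposition of $\B$ into its irreducible components $\{\B_i\}_{i\in I}$: a coalgebra direct summand is the sum of a subfamily of the $\B_i$, so $\bX$ is a sub-coproduct of $\bY$; its image meets the discrete space $|\bY|=\coprod_i|\sp^*\B_i|$ in a subset of points and is therefore open, and one checks directly that such a sub-coproduct is an open subfunctor in the functor-of-points sense.

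The substance lies in i) $\Rightarrow$ ii). Since $\f$ is a monomorphism, $\O_*(\f)$ is injective and I identify $\A$ with a sub-super-coalgebra of $\B=\bigoplus_{i\in I}\B_i$. Because every simple sub-coalgebra of $\A$ is a simple sub-coalgebra of $\B$ and hence lies in a unique irreducible component, the canonical nature of the component decomposition yields $\A=\bigoplus_{i\in I}(\A\cap\B_i)$. The crux is to show $\A\cap\B_i\in\{0,\B_i\}$ for every $i$. To see this I restrict the open immersion to the component: the fiber product of the two subfunctors $\bX$ and $\sp^*\B_i$ of $\bY$ is their intersection, an open subfunctor of $\sp^*\B_i$ since openness is preserved under base change, and under $\O_*$ it corresponds to $\A\cap\B_i$. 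I would then establish the key lemma that a local formal superscheme has no nonempty proper open subfunctor: since $\B_i$ is of finite type it is the filtered union of finite-dimensional sub-super-coalgebras $\mathbf{C}$, each dual $\mathbf{C}^*$ being a finite-dimensional local superalgebra (as $\B_i$ is irreducible), and in such a local superalgebra, with maximal superideal $\m$, every $\D(F)$ is empty when $F\subseteq\m$ and is the whole functor as soon as some even $f\in F$ is a unit; passing to the colimit forces $\sp^*(\A\cap\B_i)$ to be empty or all of $\sp^*\B_i$, i.e. $\A\cap\B_i\in\{0,\B_i\}$. Setting $J:=\{i:\A\cap\B_i=\B_i\}$, $\A':=\bigoplus_{i\notin J}\B_i$ and $\bW:=\sp^*\A'$ then gives $\B=\A\oplus\A'$, which is iii), hence ii).

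The hard part will be the key lemma on local formal superschemes — the passage from ``openness at the single point'' to ``the entire local component'' — and its compatibility along the coradical filtration. The only genuinely super feature is that $\D(F)$ must be tested against even regular functions, but this is already built into the definition of $\D$, so the indecomposability of the finite local super factors (their duals being local superalgebras, hence without nontrivial idempotents) lets the even-case argument go through essentially unchanged.
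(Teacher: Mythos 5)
Your argument is correct, and it reaches the same underlying finite-local reduction as the paper, but it is organized around a different decomposition and a different key lemma, so a comparison is worth recording. The paper reduces $\mathrm{i)}\Rightarrow\mathrm{ii)}$ along the defining direct system $\bY=\varinjlim\bY_i$ of finite affine pieces: since an open immersion is a monomorphism, hence (by the preceding proposition) a closed immersion, each $\bX_i=\bX\cap\bY_i$ is clopen in $\bY_i$, and the structure theory of Artinian superrings gives $\O(\bY_i)\simeq\O(\bX_i)\times\O(\bW_i)$, which dualizes and passes to the limit. You instead decompose $\B=\O_*(\bY)$ into irreducible components $\B_i$, observe that the sub-super-coalgebra $\A=\O_*(\bX)$ satisfies $\A=\bigoplus_i(\A\cap\B_i)$, and prove each $\A\cap\B_i\in\{0,\B_i\}$ via the lemma that a local formal superscheme admits no nonempty proper open subfunctor — itself reduced to the fact that $\D(F)$ on $\spf\mathbf{C}^*$, with $\mathbf{C}^*$ finite-dimensional local, is empty or everything because the maximal superideal is nilpotent. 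The two routes rest on the same fact (no nontrivial idempotents in a finite-dimensional local superalgebra), but yours isolates it as a statement about local formal superschemes, which is arguably more reusable, while the paper's is shorter because it leans on the already-established equivalence monomorphism $\Leftrightarrow$ closed immersion. Two small points: your appeal to ``$\B_i$ is of finite type'' is both unavailable (the proposition concerns arbitrary formal superschemes, not locally algebraic ones) and unnecessary — every super-coalgebra is the filtered union of its finite-dimensional sub-super-coalgebras, which is all you use; and in $\mathrm{ii)}\Rightarrow\mathrm{i)}$ the topological observation that the image is a union of points of the discrete space $|\bY|$ does not by itself give functorial openness, so the real content is the check you defer to (``one checks directly\dots''), namely that the pullback of a coalgebra direct summand along any $\spf A\to\bY$ is $\D(\{e\})$ for an idempotent $e\in A$, which is exactly what the paper calls ``the construction of the coproduct.''
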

\begin{proof} As in \cite[Proposition  1.2 (b)]{takeuchi1977formal}, we will reduce to the finite case.  
The equivalence between ii) and iii) follows from \Cref{3.16}. The implication i) from ii) can be deduced from the construction of the coproduct.  For ii) implies i), we may assume $\bX\subseteq \bY$ is an open subfuntor. Assume that  $\bY=\lim\bY_i$, where the $\bY_i$ are finite $\K$-superschemes. The statement can be reduced to the case when $\bY$ is finite.  Indeed, set $\bX_i$ to be  the preimage  of $\bX$ via the inclusion maps $\bY_i\rightarrow \bY$. Then the $\bX_i$'s form a direct system whose limit is $\bX$. Note that $\bX$ is both open and closed.  Thus, the coordinate ring $\O(\bX)$ is Artinian, by \cite[Corollary 3.4.10]{westra2009superrings}. It follows from \cite[Corollary 5.2.4]{westra2009superrings} that for each $i$, $\O(\bY_i)\simeq \O(\bX_i)\times \O(\bW_i)$,  where $\bW_i$ is the complement of $\bY_i$. This conclude the proof. 
\end{proof}

Immersions of formal superschemes are closed immersions, by \Cref{Prop.4.6}. Therefore, we may refer to closed subfunctors of a formal superscheme $\bX$ as \textit{subsuperschemes } of $\bX$. Note that these are in one-to-one correspondence with sub-super-coalgebra of $\O_*(\bX)$.

\subsection{Fibers}

\begin{definition}
    The \textit{fiber} of a map of formal superschemes $\f:\bX\rightarrow\bY$ over $y\in|\bY|$ is the fiber product $\bX\times_\bY\lbrace y\rbrace$ which is denoted by $\f^{-1}(y)$. 
\end{definition}

\begin{proposition}\label{inm.punt}
    A map of formal superschemes $\f:\bX\rightarrow\bY$ is an immersion if and only if for each $y\in|\bY|$, the map 
    $\f:\f^{-1}(y)\rightarrow \lbrace y\rbrace$ is. 
\end{proposition}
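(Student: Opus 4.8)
The plan is to transport the whole statement into the language of super-coalgebras through the equivalence $\sp^*$ (\Cref{3.16}) and to exploit the dictionary already established: a map $\f$ of formal superschemes is an immersion if and only if it is a monomorphism, if and only if $\O_*(\f)$ is injective (this is the proposition preceding \Cref{Prop.4.6}, combined with the observation, noted right after \Cref{Prop.4.6}, that immersions of formal superschemes are automatically closed immersions). Under this dictionary the fiber $\f^{-1}(y)=\bX\times_\bY\{y\}$ corresponds to the cotensor product $\O_*(\bX)\,\square_{\O_*(\bY)}\,\kappa(y)$, and the map $\f:\f^{-1}(y)\to\{y\}$ appearing in the statement is precisely the base change of $\f$ along the closed immersion $\{y\}=\sp^*(\kappa(y))\hookrightarrow\bY$.

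For the forward implication I would argue purely formally. A monomorphism is stable under base change in any category admitting fibered products, and by the above $\f:\f^{-1}(y)\to\{y\}$ is the pullback of the monomorphism $\f:\bX\to\bY$ along $\{y\}\to\bY$; hence it is again a monomorphism, i.e.\ an immersion. No super-specific input is needed here.

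The converse is the substantive direction, and for it I would use the decomposition into irreducible components, $\bX=\coprod_x\bX_x$ and $\bY=\coprod_y\bY_y$, under which $\f$ restricts to local maps $\bX_x\to\bY_{\f(x)}$. First I would deduce that the fiber hypothesis forces $|\f|$ to be injective: since $\kappa(y)=\cod\,\O_y$ is simple (being the coradical of the irreducible super-coalgebra $\O_y$), any subcoalgebra of it is irreducible, so a monomorphism $\f^{-1}(y)\to\{y\}$ requires $\O_*(\f^{-1}(y))$ to be irreducible; but $\f^{-1}(y)=\coprod_{x\in|\f|^{-1}(y)}\bigl(\bX_x\times_{\bY_y}\{y\}\bigr)$ with each summand nonzero (it contains $x$), and a direct sum of two or more nonzero super-coalgebras is never irreducible, so $|\f|^{-1}(y)$ is a single point. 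Once $|\f|$ is injective, distinct components of $\bX$ land in distinct summands of $\bigoplus_y\O_y$, whence $\ker\O_*(\f)=\bigoplus_x\ker(\O_x\to\O_{\f(x)})$, and it suffices to prove each local map $\bX_x\to\bY_{\f(x)}$ is a monomorphism.

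For a local map $\bX_x\to\bY_y$ I would dualize via the anti-equivalence of \Cref{the.1.2.intr} i): writing $A_x=\O_x^*$ and $A_y=\O_y^*$, which are Noetherian complete local superalgebras by \Cref{prop.equiv.fint.type}, the map corresponds to a local homomorphism $\varphi:A_y\to A_x$, injectivity of $\O_x\to\O_y$ is equivalent to surjectivity of $\varphi$, and the fiber hypothesis translates into surjectivity of $A_y/\m_{A_y}\to A_x/\m_{A_y}A_x$, that is, $A_x=\varphi(A_y)+\m_{A_y}A_x$. A topological Nakayama argument—iterating this identity and using $\bigcap_n\m_{A_y}^nA_x\subseteq\bigcap_n\m_{A_x}^n=0$ together with completeness of $A_x$ (so that $\varphi(A_y)$ is closed)—then gives $\varphi$ surjective, completing the converse. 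The main obstacle I anticipate is exactly this last step in the super, profinite setting: one must upgrade the super-commutative fiberwise surjectivity criterion of the Claim in \Cref{lemma.equiv.surj.epim} from a single maximal ideal to the complete local case (Krull intersection and closedness of the image), and confirm that the correspondence ``injective super-coalgebra map $\leftrightarrow$ surjective continuous algebra map'' is valid for finite-type (reflexive) super-coalgebras and not merely for finite-dimensional ones.
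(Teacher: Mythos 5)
Your overall architecture is sound: the forward direction via stability of monomorphisms under base change, the injectivity of $|\f|$ via the simplicity of $\kappa(y)$ and the component decomposition of the fiber, and the reduction to the component maps $\O_x\to\O_{\f(x)}$ are all correct. The paper's own proof is organized differently and is much shorter: it quotes \cite[Proposition~1.3]{takeuchi1977formal} for the reduction of the whole statement to the single claim that $|\f|$ is injective (Takeuchi's argument stays on the coalgebra side, working with the coradical filtration rather than dualizing), and then obtains that injectivity by passing to the bosonic reduction via \Cref{f_eq_f_red} and invoking the classical case. Your treatment of $|\f|$ is in fact more self-contained than the paper's, since it avoids the detour through $\f_{\mathrm{bos}}$; your treatment of the local step, by contrast, replaces the coalgebra-side argument with a dual, profinite-algebra argument, which is a genuinely different route and is the part that needs the most care.

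The one concrete misstep is the claim that $A_x=\O_x^*$ and $A_y=\O_y^*$ are Noetherian ``by \Cref{prop.equiv.fint.type}'': that proposition gives Noetherianity only when the super-coalgebra is of finite type, whereas \Cref{inm.punt} is stated for arbitrary formal superschemes, so $\O_x$ need not be of finite type and $A_x$ is in general only a profinite (linearly compact) local superalgebra. Consequently you cannot appeal to the Krull intersection theorem or to ``completeness $\Rightarrow$ closed image''. What saves the argument is linear compactness: $\varphi(A_y)$ is closed because it is the continuous image of a linearly compact space; $\bigcap_n\m_{A_x}^n=0$ because $\m_{A_x}^n\subseteq((\O_x)_{n-1})^\perp$ and the coradical filtration is exhaustive; the iteration must be carried out with the closures $\overline{\m_{A_y}^nA_x}$; and the final step needs the identity $\bigcap_n\bigl(\varphi(A_y)+V_n\bigr)=\varphi(A_y)$ for a decreasing chain of closed subspaces $V_n$ with trivial intersection, which is again a linear-compactness fact. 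You anticipated exactly this obstacle, but the tools you name (Krull intersection, completeness) are the wrong ones; the correct fix is standard but requires importing the linearly compact formalism behind \Cref{the.1.2.intr}~i), which the paper sidesteps by staying on the coalgebra side throughout.
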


\begin{proof}
    The ``only if'' part is direct. For the ``if part'', we proceed as in  \cite[Proposition 1.3]{takeuchi1977formal} to reduce the requirements to prove that $|\f|:|\bX|\to|\bY|$ is injective. By \Cref{f_eq_f_red}, this is equivalent to the injectivity of $|\f_{\mathrm{bos}}|:|\bX_{\mathrm{bos}}|\to|\bY_{\mathrm{bos}}|$, which follows from the proof of \cite[Proposition 1.3]{takeuchi1977formal}.
\end{proof}

\begin{corollary}
    Consider a cartesian square of formal superschemes  \[
\xymatrix{\bX\ar[d]\ar[rr]^{\f}&& \bY\ar[d]^{\mathbf{g}}\\\bW \ar[rr]_{\mathbf{h}}&&\Z}
\] 
If $\mathbf{g}$ is surjective, then $\f$ is an immersion if and only if $\mathbf{h}$ is so. 
\end{corollary}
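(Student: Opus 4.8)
The plan is to exploit the stability of immersions under base change together with the fact that the fiber over a point can be computed fiberwise, reducing everything to \Cref{inm.punt}. Since the square is cartesian, for any point $y \in |\Z|$ the fiber $\f^{-1}(\mathbf{h}^{-1}(y))$ of the composite $\bX \to \bW \to \Z$ agrees, up to the natural identifications coming from the universal property of the pullback, with the fiber of $\mathbf{g}$ over $y$ pulled back along $\f$. The key point is that taking fibers commutes with forming cartesian squares, so the fibers of $\f$ and of $\mathbf{h}$ are tied together via $\mathbf{g}$.

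Concretely, first I would recall that, by \Cref{inm.punt}, $\f$ is an immersion if and only if for every $w \in |\bW|$ the induced map on fibers $\f^{-1}(w) \to \{w\}$ is an immersion, and similarly $\mathbf{h}$ is an immersion if and only if $\mathbf{h}^{-1}(z) \to \{z\}$ is one for every $z \in |\Z|$. Next I would use the cartesian property: since the square is a pullback, $\f$ is the base change of $\mathbf{h}$ along $\mathbf{g}$ (immersions being stable under base change gives the easy direction). For the converse, I would fix a point $w \in |\bW|$ lying over $z = \mathbf{h}(w) \in |\Z|$ (here the surjectivity of $\mathbf{g}$ guarantees that every $z$ in the image of $\bW$ is hit, so no fiber of $\mathbf{h}$ is ``missed'' by the comparison), and observe that the fiber map $\f^{-1}(w) \to \{w\}$ is itself obtained from $\mathbf{h}^{-1}(z) \to \{z\}$ by a base change determined by $w \mapsto z$. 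Because immersions are closed immersions (\Cref{Prop.4.6}) and closed immersions correspond to surjections on coordinate coalgebras/superalgebras, this base-changed map inherits the immersion property.

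I would then assemble the equivalence: if $\mathbf{h}$ is an immersion, each $\mathbf{h}^{-1}(z) \to \{z\}$ is an immersion, hence so is each base change $\f^{-1}(w) \to \{w\}$, and by \Cref{inm.punt} $\f$ is an immersion. Conversely, if $\f$ is an immersion and $\mathbf{g}$ is surjective, then every $z \in |\Z|$ in the relevant image arises as $\mathbf{g}(w')$ for some point upstairs, and surjectivity of $\mathbf{g}$ (equivalently of $\mathbf{g}_{\mathrm{bos}}$, via \Cref{f_eq_f_red}) lets me descend the immersion property of the fibers of $\f$ back to those of $\mathbf{h}$; applying \Cref{inm.punt} again yields that $\mathbf{h}$ is an immersion.

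The main obstacle I anticipate is the converse direction, where surjectivity of $\mathbf{g}$ is genuinely needed: one must verify that the fiber $\mathbf{h}^{-1}(z) \to \{z\}$ can be \emph{recovered} from $\f$ rather than merely mapped to, i.e.\ that the immersion property descends through the faithfully surjective $\mathbf{g}$. This requires checking that the surjection $\mathbf{g}$ induces, at the level of the point $z$ and its residue data $\kappa(z)$, a pullback square of fibers whose horizontal maps are compatible with the coalgebra surjections characterizing closed immersions; the delicate step is ensuring no residual even/odd discrepancy obstructs the descent, which I expect to handle by passing to bosonic reductions and invoking \Cref{f_eq_f_red} to reduce to the purely even statement already established in the proof of \cite[Proposition 1.3]{takeuchi1977formal}.
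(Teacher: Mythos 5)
Your reduction to fibers via \Cref{inm.punt}, and your observation that surjectivity of $\mathbf{g}$ is needed so that every fiber of $\mathbf{h}$ is reached by some fiber of $\f$, both match the paper's first step. But the heart of the converse direction is missing. After reducing to $\bY=\{y\}$ and $\Z=\{z\}$, the cartesian square translates into a cotensor product of super-coalgebras, $\O_*(\f^{-1}(y))\simeq\O_*(\mathbf{h}^{-1}(z))\square_{\kappa(z)}\kappa(y)$, and ``immersion'' means here (by the proposition preceding \Cref{Prop.4.6}) that the corresponding map of super-coalgebras is \emph{injective} --- not, as you write, that there is a surjection of coordinate coalgebras. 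What actually has to be proved is that $\O_*(\mathbf{h}^{-1}(z))\to\kappa(z)$ is injective if and only if $\O_*(\mathbf{h}^{-1}(z))\square_{\kappa(z)}\kappa(y)\to\kappa(y)$ is. The paper obtains this from the faithful exactness of the functor $-\square_{\kappa(z)}\kappa(y)$ (\cite[Lemma~1.7]{zubkov2009affine}); nothing in your proposal supplies this ingredient, and without it neither implication between the fiberwise statements is justified.

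The substitute you suggest for the ``delicate step'' --- passing to bosonic reductions via \Cref{f_eq_f_red} and quoting the even case from \cite[Proposition~1.3]{takeuchi1977formal} --- cannot work. \Cref{f_eq_f_red} only asserts $|\f|=|\f_{\mathrm{bos}}|$: it controls the underlying map of topological spaces, whereas after the fiberwise reduction the immersion property is a statement about injectivity of a map of super-coalgebras, including its odd part, which the bosonic reduction discards. (Takeuchi's Proposition~1.3 is moreover the even analogue of \Cref{inm.punt}, not of this corollary.) There is also a notational slip --- the fibers of $\f:\bX\to\bY$ are indexed by $y\in|\bY|$, not by $w\in|\bW|$ --- but the substantive gap is the absence of the faithful-exactness argument for the cotensor functor.
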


\begin{proof}
By \Cref{inm.punt}, it suffices to prove the statement in the case where $\bY = \{y\}$ and $\Z = \{z\}$. Observe that in this situation we have
\[
\O_*(\bW) = \O_*(\bX) \square_{\kappa(y)} \kappa(z).
\]
Hence, it remains to show that a morphism $\O_*(\bX) \to \kappa(y)$ is injective if and only if the induced map
\[
\O_*(\bX) \square_{\kappa(y)} \kappa(z) \to \kappa(z)
\]
is injective. This equivalence is a consequence of the fact that the functor $- \square_{\kappa(y)} \kappa(z)$ is faithfully exact, a property proved in \cite[Lemma 1.7]{zubkov2009affine}.
\end{proof}

\subsection{Flat morphisms} 

An $\O_*(\bX)$-super-comodule is called an $\bX$-\textit{supermodule}.

Let $\mathsf{SMod}_\bX$ be the (abelian) category of $\bX$-supermodules. Let $\f:\bX\to\bY$ be a map of formal superschemes. Thus, we get a forgetful functor $\f_*:\mathsf{SMod}_\bX\to\mathsf{SMod}_\bY$ induced by $\O_*(\f)$. It has a right adjoint given by (\cite[\S2]{takeuchi1977formal})

\[
\function{\f^*}{\mathsf{SMod}_\bY}{\mathsf{SMod}_\bX}{\M}{\M\square_{\O_*(\bY)}\O_*(\bX).}
\]
For any $\M\in\mathsf{SMod}_\bX$ and $x\in|\bX|$, we write 

\[
\M_x=\M\square_{\O_*(\bX)}\O_x\quad \text{and} \quad \M(x)=\M\square_{\O_*(\bX)}\kappa(\bX).
\]

\begin{definition}
    A map $\f:\bX\to\bY$ of formal superschemes  is called (\textit{faithfully}) \textit{flat} if $\f^*$ is (faithfully) exact. Let $x\in|\bX|$ and $y=\f(x)$. We say that $\f$ is \textit{flat} at $x$ if the induced map $\f:\bX_x\to\bY_y$ is.
\end{definition}

\begin{remark}
     Note that $\f$ is flat if and only if  $-\square_{\O_*(\bY)} \O_*(\bX)$ is exact,  or, equivalently,  $-\square_{\O_*(\bY)} \O_*(\bX)$ takes epimorphisms to epimorphisms. 
\end{remark}

In order to obtain useful characterizations of the concepts of flat and faithfully flat, it will be necessary to use the following lemma.

\begin{lemma}\label{super.tak.a.2.2}
    Let \( \A \) be an irreducible super-cocommutative super-coalgebra. A right \( \A \)-super-comodule \( \M \) is flat if and only if it is free, i.e., \( \M \cong \Ws  \otimes \A \) for some super vector space \( \Ws  \). In particular, non-zero flat \( \A \)-super-comodules are faithfully flat.
\end{lemma}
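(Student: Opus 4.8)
My plan is to separate the two implications and reduce everything to showing ``flat $\Rightarrow$ free''. The converse is immediate: if $\M\cong\Ws\otimes\A$ is cofree, then for every left $\A$-super-comodule $\N$ cotensoring with the cofree comodule collapses the $\A$-factor through the counit, giving a natural isomorphism $\N\square_\A(\Ws\otimes\A)\cong\N\otimes_\K\Ws$. Thus $-\square_\A\M$ is naturally isomorphic to $-\otimes_\K\Ws$, which is exact and is faithful exactly when $\Ws\neq0$. This already shows that free super-comodules are flat, that nonzero free super-comodules are faithfully flat, and that the final sentence of the lemma will follow once we know flat comodules are free (a nonzero flat $\M$ will have nonzero fiber $\Ws$).

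For the hard direction I would first reduce to the \emph{pointed} case, where $\cod\A=\K g$ for a single group-like $g$. The coradical $\kappa:=\cod\A$ is the unique simple sub-super-coalgebra; being coprime it is purely even, and $L:=\kappa^{*}$ is a finite separable field extension of $\K$. Extending scalars along the faithfully flat map $\K\to L$ turns $\A$ into a finite direct sum of pointed irreducible super-coalgebras; since a comodule over a direct sum splits accordingly, and both flatness and freeness are detected and preserved by this extension, it suffices to treat one pointed irreducible summand and then descend. In the pointed case the counit gives a coalgebra retraction $\A\twoheadrightarrow\kappa$ splitting $\kappa\hookrightarrow\A$, the coradical filtration satisfies $\Delta(\A_n)\subseteq\sum_{i+j=n}\A_i\otimes\A_j$ with $\A_0=\kappa$, and I set $\Ws:=\M_0=\M\square_\A\kappa\subseteq\M$, the socle of $\M$. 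Choosing a homogeneous linear retraction $q:\M\to\Ws$ I form the super-comodule map into the cofree comodule
\[
\psi:=(q\otimes\id_\A)\circ\rho_\M:\M\longrightarrow\Ws\otimes\A,
\]
which respects the filtrations ($\psi(\M_n)\subseteq\Ws\otimes\A_n$) and restricts on the socle to the identification $\Ws\cong\Ws\otimes\A_0$. This $\psi$ is my candidate isomorphism.

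It then remains to prove $\psi$ is bijective, which I would establish by passing to associated graded objects and invoking a Nakayama-type argument. Since $\A$ is pointed, $\mathrm{gr}\,\A$ is a connected graded super-coalgebra with $\mathrm{gr}_0\A=\K$; flatness of $\M$ passes to the induced graded comodule $\mathrm{gr}\,\M$ over $\mathrm{gr}\,\A$, and the crux is to show that a flat (bounded-below) comodule over a connected graded super-coalgebra is cofree, so that $\mathrm{gr}\,\M\cong\Ws\otimes\mathrm{gr}\,\A$ degreewise and $\mathrm{gr}\,\psi$ is an isomorphism. Because $\M=\bigcup_n\M_n$ and $\Ws\otimes\A=\bigcup_n\Ws\otimes\A_n$ by \Cref{Dual:Krull}, an isomorphism on associated graded lifts to an isomorphism of the filtered objects, so $\psi$ is bijective and $\M$ is free; the faithful-flatness claim then follows from the first paragraph. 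The main obstacle is exactly this graded step: identifying the filtration quotients via $\M\square_\A(\A_n/\A_{n-1})\cong\Ws\otimes(\A_n/\A_{n-1})$, which is where flatness is genuinely used (to keep the fiber rank from dropping) together with the simplicity of the coradical; this is the super-analogue of the central computation in \cite{takeuchi1977formal}. A secondary, purely bookkeeping, point is to keep all choices --- the retraction $q$, the space $\Ws$, and the graded splittings --- homogeneous for the $\Z_2$-grading, which is routine since every map in sight preserves parity.
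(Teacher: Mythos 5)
Your overall skeleton matches the paper's: both arguments hinge on the fact that the dual of the coradical of $\A$ is a field, identify the socle $\M_0=\M\square_\A\A_0$ as a cofree $\A_0$-comodule, and then defer the lifting of a socle basis along the coradical filtration to the computation of \cite[Proposition A.2.2]{takeuchi1977formal}. Your converse direction (cofree $\Rightarrow$ flat, via $\N\square_\A(\Ws\otimes\A)\cong\N\otimes_\K\Ws$) and the deduction of faithful flatness are correct and more explicit than the paper's. The genuine divergence is your reduction to the pointed case, and that is where the soft spots lie. First, $L=\kappa^*$ is indeed a finite field extension of $\K$ (a finite-dimensional commutative simple algebra), but there is no reason for it to be \emph{separable}; over an imperfect field it can be purely inseparable, in which case $\kappa\otimes_\K L=(L\otimes_\K L)^*$ is not cosemisimple and $\A\otimes_\K L$ need not split into pointed irreducible summands after one base change --- even in the separable case you would need $L/\K$ Galois, or to iterate, or to pass to a separable closure, which is no longer a finite extension. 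Second, the assertion that freeness of $\M\otimes_\K L$ over $\A\otimes_\K L$ descends to freeness of $\M$ over $\A$ is a nontrivial descent statement that you do not justify.

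Both issues are avoidable, because the reduction is unnecessary. The paper works directly over $\K$: since $\A_0$ is simple and $\A$ is super-cocommutative, $\A_0^*$ is a finite-dimensional supercommutative superalgebra with no nontrivial proper superideals, so its canonical superideal $\mathfrak{J}_{\A_0^*}$ vanishes and $\A_0^*$ is a (purely even) field. Consequently every $\A_0$-super-comodule is already cofree over $\K$ --- an $L$-vector space of $L$-dimension $d$ is isomorphic to $\K^d\otimes\A_0$ as a comodule --- which gives the identification $\M_0\cong\mathbf{F}\otimes\A_0$ directly, and Takeuchi's lifting argument then applies with this socle. If you drop the pointed reduction and substitute this observation, the rest of your argument (the map $\psi$, the filtration comparison, the graded step) is exactly the content you are already citing from Takeuchi, so no gap remains beyond what the paper itself delegates to that reference.
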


\begin{proof}
    Let $\A_0$ denote the coradical of $\A$. We claim that $\A_0^*$ is a field. Indeed, since $\A$ is super-cocommutative and irreducible, its coradical $\A_0$ is a simple super-coalgebra. Hence, $\A_0^*$ is finite-dimensional, supercommutative, and has no nontrivial proper superideals. In particular, the canonical superideal $\mathfrak{J}_{\A_0^*}$ is zero. Consequently, $\A_0^*$ is a commutative ring whose only proper ideal is the zero ideal, which is therefore maximal. Thus, $\A_0^*$ is a field.
    
    Now, let $\M$ be a flat right $\A$-super-comodule, and consider the cotensor product $\M_0 := \M \square_\A \A_0$. Since $\A_0^*$ is a field, there exists a super vector space $\mathbf{F}$ over $\A_0^*$ such that $\M_0 \cong \mathsf{F} \otimes\A_0$ as $\A_0$-super-comodules (\cite[Propositions~9.2.1 and~9.2.3]{westra2009superrings}). 
    The proof is finished as in \cite[Proposition A.2.2]{takeuchi1977formal}.   
    
\end{proof}

\begin{proposition} Let $\f:\bX\to\bY$ be a map of formal superschemes. 
    \ 
    
    \begin{enumerate}
        \item[\rm i)] $\f$ is flat if and only if it is at every $x\in|\bX|$.
        \item[\rm ii)] The following conditions are equivalent. 
        
        \begin{enumerate}
            \item[\rm (a)] $\f$ is faithfully flat,
            \item[\rm (b)] $\f$ is flat and surjective.
            \item[\rm (c)] $\f$ is flat and strictly surjective.
        \end{enumerate} 
    \end{enumerate}
\end{proposition}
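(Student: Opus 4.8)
The plan is to mirror Takeuchi's treatment of flatness for formal schemes, now leveraging the decomposition of a formal superscheme into its irreducible components together with \Cref{super.tak.a.2.2}. Recall that by the discussion in the irreducible-components subsection we may write $\bX=\coprod_{x}\bX_x$ and $\bY=\coprod_{y}\bY_y$, with $\O_*(\bX_x)=\O_x$ irreducible super-cocommutative, and that $\bX$-supermodules are $\O_*(\bX)$-super-comodules with $\f^*(-)=-\square_{\O_*(\bY)}\O_*(\bX)$. The key reduction throughout is that a $\square$-product over the full coalgebra decomposes as a sum of $\square$-products over the individual irreducible components, so exactness can be checked componentwise.

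First I would prove i). Since $\O_*(\bX)=\bigoplus_x\O_x$ and cotensoring is additive and commutes with the direct-sum decomposition, the functor $\f^*=-\square_{\O_*(\bY)}\O_*(\bX)$ is exact if and only if each restriction $-\square_{\O_*(\bY)}\O_x$ is exact, and the latter factors through the local piece $\O_y$ with $y=\f(x)$. Thus $\f$ is flat iff $-\square_{\O_y}\O_x$ is exact for all $x$, which is exactly the statement that $\f$ is flat at every $x$. The genuinely super-specific input here is \Cref{super.tak.a.2.2}: because $\O_y$ is irreducible, an $\O_y$-super-comodule is flat iff it is free, so the local flatness condition is both verifiable and well-behaved.

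For ii), the implications are the standard cycle. That (a) $\Rightarrow$ (b) is immediate since faithful flatness gives flatness, and faithful exactness of $\f^*$ forces $|\f|$ to be surjective (a point $y$ not in the image would let us build a nonzero $\bY$-supermodule killed by $\f^*$, contradicting faithfulness); here I would use \Cref{f_eq_f_red} to phrase surjectivity of $|\f|$ in terms of $\f_{\mathrm{bos}}$. For (b) $\Rightarrow$ (c), I would show that surjectivity of $|\f|$ upgrades to strict surjectivity of $\O_*(\f)$ once $\f$ is flat: flatness makes each local map $\O_x\to\O_y$ free by \Cref{super.tak.a.2.2}, hence faithfully flat, so the comodule map is injective on coradicals and the image hits every component, giving surjectivity of $\O_*(\f)$. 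For (c) $\Rightarrow$ (a), the final step is to check that flatness plus strict surjectivity yields faithful exactness: if $\f^*(\M)=0$ then $\M\square_{\O_*(\bY)}\O_*(\bX)=0$, and strict surjectivity of $\O_*(\f)$ together with the freeness from \Cref{super.tak.a.2.2} forces $\M=0$.

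The main obstacle I anticipate is the (b) $\Rightarrow$ (c) step, i.e.\ promoting set-theoretic surjectivity of $|\f|$ to strict surjectivity of the coalgebra map $\O_*(\f)$ under the flatness hypothesis. In the classical case this rests on the fact that a flat local map of irreducible coalgebras is free and therefore faithfully flat, so that its dual is a faithfully flat local homomorphism and in particular injective with the right image; the delicate point in the super setting is controlling the odd part, ensuring that the freeness statement of \Cref{super.tak.a.2.2} (which supplies $\O_x\cong\Ws\otimes\O_y$ for a super vector space $\Ws$) indeed gives surjectivity of $\O_*(\f)$ componentwise and not merely on bosonic reductions. I would handle this by arguing on coradicals first, where $\kappa(x)\to\kappa(y)$ is a map of simple super-coalgebras whose duals are fields, and then lifting through the coradical filtration using the wedge-product formalism of \Cref{Sub.sec.2.5} and \Cref{Dual:Krull}.
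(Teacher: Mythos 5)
Your proposal is correct and follows essentially the same route as the paper: decompose $\O_*(\bX)$ and $\O_*(\bY)$ into irreducible components, observe that (faithful) flatness of a comodule can be checked componentwise via the cotensor decomposition, and feed the local statement into \Cref{super.tak.a.2.2} so that Takeuchi's Proposition 2.1 carries over verbatim. The paper's proof is just a terser version of this, deferring the cycle of implications in ii) to Takeuchi, whereas you spell out the (b)$\Rightarrow$(c) step explicitly; both rest on the same key lemma.
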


\begin{proof}Let $\A$ be a super-cocommutative super-coalgebra, and let $\{\A_i\}$ denote the set of its irreducible components. Since $$\A = \bigoplus_i \A_i,$$ it follows that for any right $\A$-super-comodule $\M$, we have 

$$\M = \bigoplus_i (\M\square_\A \A_i).$$

\noindent Consequently, such a $\M$ is (faithfully) flat if and only if each of the right $\A_i$-super-comodules $\M \square_\A \A_i$ is (see \cite[p.1527]{takeuchi1977formal}). This, together with  \Cref{super.tak.a.2.2}, implies that the proof of the proposition can be carried out in the same manner as that of \cite[Proposition 2.1]{takeuchi1977formal}.
\end{proof}

\begin{proposition}
    A map $\f:\bX\to\bY$ of formal superschemes is (faithfully) flat if and only if the projection map $\pi_2:\bX\times_\bY \bW\to\bW$ is, for each finite subsuperscheme of $\bY$.
\end{proposition}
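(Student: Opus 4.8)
The plan is to transport everything to the super-coalgebra side via the equivalence $\sp^*$ (\Cref{3.16}) and then reduce the exactness of the global cotensor functor to the finite level. Write $\A=\O_*(\bY)$, $\B=\O_*(\bX)$ and $u=\O_*(\f):\B\to\A$, so that $\f^*=-\square_\A\B$ on $\mathsf{SMod}_\bY=\mathsf{SMod}^\A$ and $\f$ is (faithfully) flat exactly when this functor is (faithfully) exact. A finite subsuperscheme $\bW\subseteq\bY$ corresponds to a finite-dimensional sub-super-coalgebra $\C\subseteq\A$; by the definition of the fiber product, $\bX\times_\bY\bW=\sp^*(\B\square_\A\C)$, and $\O_*(\pi_2)$ is the projection $\D:=\B\square_\A\C\to\C$. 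Thus $\pi_2$ is (faithfully) flat iff $-\square_\C\D$ is (faithfully) exact on $\mathsf{SMod}^\C$, and it suffices to prove that $-\square_\A\B$ is (faithfully) exact iff $-\square_\C(\B\square_\A\C)$ is (faithfully) exact for every finite-dimensional sub-super-coalgebra $\C\subseteq\A$.

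The key step is a base-change isomorphism: for a right $\C$-super-comodule $\M$, viewed as a right $\A$-super-comodule through $\C\hookrightarrow\A$, there is a natural isomorphism
$$\M\square_\C(\B\square_\A\C)\;\cong\;\M\square_\A\B.$$
I would obtain this from associativity of the cotensor product together with the counit isomorphism $\M\square_\C\C\cong\M$: regarding $\C$ as a $(\C,\A)$-bi-super-comodule (left coaction $\Delta_\C$, right coaction the inclusion), one has $\M\square_\A\B\cong(\M\square_\C\C)\square_\A\B\cong\M\square_\C(\C\square_\A\B)$, and super-cocommutativity identifies $\C\square_\A\B$ with $\B\square_\A\C$ through the twisting map $c$. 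Since $\C$ is a coalgebra it is coflat over itself, so the associativity isomorphism is legitimate; all of this proceeds as in the non-super theory of the cotensor product, the only new ingredient being the Koszul signs carried by $c$.

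With the isomorphism in hand the equivalence is formal. For the ``only if'' direction, an epimorphism $\M'\twoheadrightarrow\N'$ of $\C$-super-comodules is in particular one of $\A$-super-comodules; applying the exact functor $-\square_\A\B$ and transporting along the natural isomorphism shows $-\square_\C\D$ preserves epimorphisms, and since the cotensor product is always left exact it is exact. For the ``if'' direction I would use that every super-comodule is the filtered union of its finite-dimensional sub-super-comodules (the super fundamental theorem of comodules, cf.\ \Cref{prop:Super:2.1.3:In:Sweedler}), and that each finite-dimensional $\A$-super-comodule is a $\C$-super-comodule for a suitable finite-dimensional $\C\subseteq\A$ (its coefficient coalgebra). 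Given an epimorphism $\M\twoheadrightarrow\N$ of $\A$-super-comodules, I write it as the filtered colimit of epimorphisms $\M_i\twoheadrightarrow\N_i$ of finite-dimensional comodules over finite $\C_i$; by hypothesis each $\M_i\square_{\C_i}(\B\square_\A\C_i)\to\N_i\square_{\C_i}(\B\square_\A\C_i)$ is an epimorphism, hence so is each $\M_i\square_\A\B\to\N_i\square_\A\B$ by the base-change isomorphism. Since the cotensor product commutes with filtered colimits (filtered colimits being exact and commuting with $\otimes$ and with kernels), passing to the colimit shows $\M\square_\A\B\to\N\square_\A\B$ is an epimorphism, so $-\square_\A\B$ is exact.

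The faithfully flat case is handled in parallel: by the preceding proposition faithful flatness is flatness together with surjectivity, and surjectivity of $\f$ is equivalent to surjectivity of every $\pi_2$ over a finite $\bW$ (a point of $\bY$ lies in the image iff it does after restriction to a finite subsuperscheme containing it), which one checks via \Cref{f_eq_f_red} exactly as in \cite[\S2]{takeuchi1977formal}. The main obstacle is the base-change isomorphism, specifically verifying that cotensor associativity holds in the super setting with the correct Koszul signs; once it is secured, the reduction to the finite level is a formal consequence of local finiteness and the exactness of filtered colimits.
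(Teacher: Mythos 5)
Your proof is correct, and its overall strategy coincides with the paper's: reduce the (faithful) exactness of $-\square_{\O_*(\bY)}\O_*(\bX)$ to the finite-dimensional level, using that finite subsuperschemes of $\bY$ are exactly the $\sp^*\C$ with $\C\subseteq\O_*(\bY)$ a finite-dimensional sub-super-coalgebra. The implementation, however, is genuinely different. The paper converts the cotensor product into a Hom functor via the duality $\M\square_\A\N\cong\hom_\A(\N^*,\M)$ for finite-dimensional $\N$ and then defers to Takeuchi's Proposition A.2.1, which already asserts that (faithful) exactness of this functor can be tested on finite-dimensional comodules. You instead prove the reduction directly, through the base-change isomorphism $\M\square_\C(\B\square_\A\C)\cong\M\square_\A\B$ together with the exhaustion of an arbitrary super-comodule by its finite-dimensional sub-super-comodules (each living over its finite-dimensional coefficient sub-super-coalgebra) and the commutation of $\square$ with filtered colimits; the faithfulness/surjectivity part is then settled by testing on the finite subsuperschemes $\{y\}$. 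What your route buys is self-containedness and an explicit bridge between ``finite subsuperscheme of $\bY$'' and ``finite-dimensional comodule,'' a link the paper leaves implicit; what the paper's route buys is brevity, since the hard work is delegated to the cited result. One cosmetic remark: the justification you offer for cotensor associativity (that $\C$ is coflat over itself) is not the operative condition; what makes $(\M\square_\C\C)\square_\A\B\cong\M\square_\C(\C\square_\A\B)$ work is that tensoring with a fixed super vector space is exact over a field, so both iterated cotensors are the intersection of the same two kernels inside $\M\otimes\C\otimes\B$. The isomorphism you need is nevertheless correct, so this does not affect the argument.
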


\begin{proof}
    Let $\M$ be a right $\A$-super-comodule and $\N$ a left super-comodule over $\A$, with $\N$ finite-dimensional. Then, $\N^*$ has (a uniquely defined) structure of right $\A$-supermodule (\cite[Lemma 1.7]{zubkov2009affine}). Let $\hom_\A(\N^*, \M)$ denote the set of morphisms $\N^*\to \M$ as right $\A$-super-comodules. Then, as in the proof of \cite[Proposition A.2.1]{takeuchi1977formal}, one sees that the functor $\hom_\A(-, \M)$ is (faithfully) exact if and only if its  restriction on the subcategory of finite-dimensional left or right $\A$-super-comodules is. 
\end{proof}

The following two propositions follow straightforward. 

\begin{proposition}
    Let $\xymatrix{\bX\ar[r]^{\f}&\bY\ar[r]^{\mathbf{g}}&\bW}$ be a diagram of formal superschemes and flat maps. Let $x\in|\bX|$,  $y=\f(x)$ and $z=\mathbf{g}(y)$.

    \begin{enumerate}
        \item[\rm i)] If $\f$ is flat at $x$ and $\mathbf{g}$ is at $y$, then $\mathbf{g}\circ \f$ is at $z$.
        \item[\rm ii)] If $\f$ and $\mathbf{g}\circ \f$ are flat at $x$, then $\mathbf{g}$ is at $y$. 
    \end{enumerate}
\end{proposition}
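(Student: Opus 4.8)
The plan is to pass first to irreducible components and then argue entirely at the level of comodule categories, where flatness at a point is exactness of the functor $\f^{*}=-\square_{\O_y}\O_x$. Since flatness at a point is \emph{defined} through the restricted maps $\bX_x\to\bY_y$, $\bY_y\to\bW_z$, and $\bX_x\to\bW_z$ of irreducible components, I would fix these and work with the induced super-coalgebra maps $\O_x\to\O_y\to\O_z$ (recall a morphism carries the component through $x$ into the one through its image). The central observation is that the forgetful (corestriction) functors compose on the nose: corestricting a comodule structure along $\O_x\to\O_y$ and then along $\O_y\to\O_z$ agrees with corestricting along the composite, so $(\g\circ\f)_*=\g_*\circ\f_*$. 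Taking right adjoints and invoking uniqueness of adjoints then yields the key identity $(\g\circ\f)^{*}\cong\f^{*}\circ\g^{*}$, which lets me bypass writing out cotensor associativity by hand.

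For part i), once this identity is in place the statement is immediate: $\f$ flat at $x$ and $\g$ flat at $y$ mean $\f^{*}$ and $\g^{*}$ are exact, and a composite of exact functors is exact, so $(\g\circ\f)^{*}$ is exact, i.e.\ $\g\circ\f$ is flat along the component through $x$ (whose image is $z$). The faithfully flat variant is identical, since a composite of faithfully exact functors is faithfully exact.

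For part ii) I would invoke \Cref{super.tak.a.2.2}. Because $\O_y$ is an irreducible component it is an irreducible super-cocommutative super-coalgebra, and $\O_x\neq 0$; the hypothesis that $\f$ is flat at $x$ then forces $\O_x$ to be a nonzero flat, hence faithfully flat, $\O_y$-comodule, so $\f^{*}$ is \emph{faithfully} exact and in particular reflects exactness. Now take a short exact sequence of $\O_z$-comodules, apply $\g^{*}$ and then $\f^{*}$; the result is $(\g\circ\f)^{*}$ applied to the sequence, which is exact since $\g\circ\f$ is flat at $x$. As $\f^{*}$ reflects exactness, the intermediate sequence $\g^{*}(-)$ must already have been exact, whence $\g^{*}$ is exact and $\g$ is flat at $y$.

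The main obstacle is essentially bookkeeping: verifying that the adjunction identity $(\g\circ\f)^{*}\cong\f^{*}\circ\g^{*}$ holds with the correct Koszul signs in the comodule coactions, and confirming that \textquotedblleft faithfully flat\textquotedblright{} in the sense of \Cref{super.tak.a.2.2} produces a functor that reflects exactness. Both reductions rest on the irreducible-component decomposition and on the freeness statement of \Cref{super.tak.a.2.2}, so no genuinely new input beyond the already-established lemmas is required, consistent with the paper's remark that these propositions follow straightforwardly.
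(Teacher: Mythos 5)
Your argument is correct and is exactly the one the paper intends but omits (the text merely declares these propositions ``straightforward''): reduce to irreducible components, obtain $(\g\circ\f)^{*}\cong\f^{*}\circ\g^{*}$ from $(\g\circ\f)_{*}=\g_{*}\circ\f_{*}$ by uniqueness of right adjoints, and for ii) upgrade $\f^{*}$ to a faithfully exact functor via \Cref{super.tak.a.2.2} so that it reflects the exactness of $\g^{*}$. No gaps; your reading of ``$\g\circ\f$ is at $z$'' as flatness along the component $\bX_x\to\bW_z$ is the right interpretation of the statement.
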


\begin{proposition}
    Let 

    \[
    \xymatrix{\bX\ar[r]^{\f}&\bY\\\bX'\ar[u]^{\mathbf{g}'}\ar[r]_{\f'}&\bY'\ar[u]_{\mathbf{g}}}
    \]
    be a cartesian square of formal superschemes. Let $x'\in|\bX'|$, $x=\mathbf{g}'(x')$ and $y'=\f'(x')$. 

    \begin{enumerate}
        \item[\rm i)] If $\f$ is flat at $x$, then $\f'$ is at $x'$. 
        \item[\rm ii)] If $\f$ is (faithfully) flat then $\f'$ is.
        \item[\rm iii)] If $\f'$ is flat at $x'$ and $\mathbf{g}$ at $y'$, then $\f$ is at $x$.  
    \end{enumerate}
\end{proposition}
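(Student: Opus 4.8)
The plan is to reduce all three parts to the behaviour of flatness over irreducible components, where \Cref{super.tak.a.2.2} lets me replace ``flat'' by ``free''. Throughout I would write $\A=\O_*(\bX)$, $\B=\O_*(\bY)$, $\A'=\O_*(\bX')$, $\B'=\O_*(\bY')$; since the square is cartesian, $\A'=\A\square_\B\B'$, and the induced coalgebra maps realize $\f,\g,\f',\g'$ as the four structure maps. Recall that $\f$ is flat at a point exactly when the corresponding map of irreducible components is flat, i.e.\ when $\O_x$ is a flat right $\O_y$-super-comodule via $\O_*(\f)$; and recall from the proof of the preceding proposition that, under the splitting $\A=\bigoplus_i\A_i$, a super-comodule is flat iff each of its component summands is.

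For i) I would set $y=\f(x)=\g(y')$. By hypothesis $\bX_x\to\bY_y$ is flat, so $\O_x$ is a flat right $\O_y$-super-comodule; since $\O_y$ is irreducible, \Cref{super.tak.a.2.2} gives $\O_x\cong\Ws\otimes\O_y$ for some super vector space $\Ws$. The component $\bX'_{x'}$ sits inside $\bX_x\times_{\bY_y}\bY'_{y'}$, whose coalgebra is $\O_x\square_{\O_y}\O_{y'}$, and by the counit unitality of the cotensor product together with the fact that it commutes with $\Ws\otimes-$,
\[
\O_x\square_{\O_y}\O_{y'}\cong(\Ws\otimes\O_y)\square_{\O_y}\O_{y'}\cong\Ws\otimes\O_{y'},
\]
which is free, hence flat, as a right $\O_{y'}$-super-comodule. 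Since $\O_{x'}$ is a direct-summand subcoalgebra of this coalgebra and the coalgebra splitting into irreducible components refines to a splitting of right $\O_{y'}$-super-comodules (because $\O_*(\f')$ is a coalgebra map), $\O_{x'}$ is again flat, so $\f'$ is flat at $x'$.

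Part ii) I would obtain by combining i) with the two preceding propositions. A map is flat iff flat at every point, so i), applied at every $x'\in|\bX'|$ with $x=\g'(x')$, shows that $\f$ flat implies $\f'$ flat. For the faithfully flat case I would use that faithful flatness is equivalent to flatness plus surjectivity; it then remains to see that $|\f'|$ is surjective when $|\f|$ is, which—after passing to bosonic reductions via $|\f|=|\f_{\mathrm{bos}}|$—is stability of surjectivity under base change on the underlying topological spaces. Finally, iii) is formal: since $\g'$ is the base change of $\g$ along $\f$ and $\f'(x')=y'$, part i) applied to $\g$ shows $\g'$ is flat at $x'$; moreover $\f\circ\g'=\g\circ\f'$ is flat at $x'$, as $\f'$ is flat at $x'$ and $\g$ is flat at $y'=\f'(x')$. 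Applying the composition proposition (``if the first map and the composite are flat at a point, so is the second'') to $\g'$ and $\f\circ\g'$ then gives that $\f$ is flat at $\g'(x')=x$.

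The technical heart, and the only place real work hides, is the cotensor computation in i): identifying $\bX'_{x'}$ as a summand of $\bX_x\times_{\bY_y}\bY'_{y'}$, verifying $(\Ws\otimes\O_y)\square_{\O_y}\O_{y'}\cong\Ws\otimes\O_{y'}$ as $\O_{y'}$-super-comodules, and checking that the irreducible-component splitting of the coalgebra is compatible with the $\O_{y'}$-super-comodule structure. The surjectivity-under-base-change step in ii) is the other point that needs care; everything else is a formal consequence of results already in hand.
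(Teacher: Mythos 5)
Your proposal is correct. The paper offers no argument of its own (it simply declares the result to ``follow straightforward''), and what you write is exactly the intended Takeuchi-style derivation from the tools already in place: reduction to irreducible components, Lemma~\ref{super.tak.a.2.2} to replace ``flat'' by ``free'', the cotensor identity $(\Ws\otimes\O_y)\square_{\O_y}\O_{y'}\cong\Ws\otimes\O_{y'}$ for i), and for iii) the cancellation clause of the composition proposition (which is legitimate here precisely because nonzero flat super-comodules over irreducible super-coalgebras are automatically faithfully flat).
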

 
\begin{lemma}\label{Fact.1.1}
    Let $F\mid \K$ be a field extension, $\M$ be an $\bX$-supermodule and  $z\in|\bX\otimes F|$ lying over $x\in|\bX|$. If $(F\otimes \M)_z=0$, then $\M_x=0$, where $F\otimes\M$ is seen as an $\bX\otimes F$-supermodule.
\end{lemma}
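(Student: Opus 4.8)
The plan is to prove $\M_x=0$ by first rephrasing it as the vanishing of a \emph{fiber} and then transporting that vanishing across the faithfully flat base change $\K\to F$. First I would reduce to fibers. Since $\O_x$ is an irreducible super-coalgebra, its coradical $\cod\,\O_x=\kappa(x)$ is the unique simple sub-super-coalgebra, and by \Cref{Dual:Krull} any nonzero $\O_x$-super-comodule has nonzero socle, i.e.\ the functor $-\,\square_{\O_x}\kappa(x)$ detects nonvanishing. Applying this to $\M_x=\M\square_\A\O_x$ and using associativity of the cotensor product (the socle being $\M_x\square_{\O_x}\kappa(x)=\M\square_\A\kappa(x)=:\M(x)$), we get $\M_x=0\iff\M(x)=0$. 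The same argument on $\bX\otimes F$ gives $(F\otimes\M)_z=0\iff(F\otimes\M)(z):=(F\otimes\M)\square_{\A\otimes F}\kappa(z)=0$.

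Next I would pin down the structure of the fiber $\M(x)$. Because $\kappa(x)$ is simple and super-cocommutative, its dual is a simple supercommutative superalgebra; as every odd element squares to zero (hence is nilpotent), $\kappa(x)$ must be purely even, with $L:=\kappa(x)^*$ a field extension of $\K$. Thus $\M(x)$ is a super-comodule over the simple coalgebra $\kappa(x)$, and exactly as in the proof of \Cref{super.tak.a.2.2} (cf.\ \cite[Propositions~9.2.1 and~9.2.3]{westra2009superrings}) it is cofree: $\M(x)\cong\Vs\otimes\kappa(x)$ for some super $L$-vector space $\Vs$, so that $\M(x)=0\iff\Vs=0$.

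Then I would descend along $F\mid\K$. This extension is faithfully flat and cotensor commutes with it, so $F\otimes\M(x)\cong(F\otimes\Vs)\otimes(\kappa(x)\otimes F)$ is a cofree $(\kappa(x)\otimes F)$-comodule. Decomposing $\kappa(x)\otimes F=\bigoplus_{z'/x}\mathcal{C}_{z'}$ into irreducible components indexed by the points $z'$ of $\bX\otimes F$ lying over $x$ (where $\mathcal{C}_{z'}$ has coradical $\kappa(z')$), each summand $(F\otimes\M(x))\square_{\kappa(x)\otimes F}\mathcal{C}_{z'}$ is again cofree over $\mathcal{C}_{z'}$, hence nonzero whenever $\Vs\neq0$. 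So if $\M_x\neq0$, then $\Vs\neq0$, the $z$-summand $(F\otimes\M)\square_{\A\otimes F}\mathcal{C}_z$ is nonzero, and since $\mathcal{C}_z\subseteq\O_z$ (the full $z$-component of $\A\otimes F$) this summand embeds into $(F\otimes\M)_z$, forcing $(F\otimes\M)_z\neq0$. Contrapositively, $(F\otimes\M)_z=0$ implies $\M_x=0$.

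The hard part will be the possible \emph{inseparability} of $F\mid\K$: then $\kappa(x)\otimes F$ need not be cosemisimple, its coradical $\bigoplus_{z'/x}\kappa(z')$ is in general strictly smaller than $\kappa(x)\otimes F$, and one cannot simply cotensor with $\kappa(z)$. The device that resolves this is to work with the whole irreducible component $\mathcal{C}_z$ of $\kappa(x)\otimes F$, squeezed between $\kappa(z)$ and $\O_z$, and to exploit cofreeness so that \emph{every} component — not merely some component — is nonzero; this is precisely what makes the hypothesis at the single point $z$ sufficient to conclude $\M_x=0$.
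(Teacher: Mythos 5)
Your argument is correct and is essentially the same as the paper's, which simply invokes \cite[Lemma~2.2]{takeuchi1977formal}: reduce to the fiber $\M(x)$ via the socle, observe that it is a free module over the field $\kappa(x)^*$, and use that a nonzero free module over the Artinian ring $\kappa(x)^*\otimes F$ has nonzero component at \emph{every} point lying over $x$, in particular at $z$. You have merely spelled out (correctly, including the inseparable case) the details that the paper leaves to the citation.
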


\begin{proof}
The proof can be copied from \cite[Lemma 2.2]{takeuchi1977formal}.
\end{proof}
\begin{proposition}
    Let $\f:\bX\to\bY$ be a map of formal superschemes and $F\mid \K$ a field extension. Let $z\in|\bX\otimes F|$ lie over $x\in|\bX|$. 

    \begin{enumerate}
        \item[\rm i)] $\f$ is flat at $x$ if and only if $\f\otimes F$ is at $z$.
        \item[\rm ii)] $\f$ is (faithfully) flat if and only if $\f\otimes F$ is. 
    \end{enumerate}
\end{proposition}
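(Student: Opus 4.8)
The plan is to transport everything to the super-coalgebra side via the equivalence $\sp^*$ (\Cref{3.16}). Write $\O_x=\O_*(\bX_x)$ and $\O_y=\O_*(\bY_y)$ for the irreducible components at $x$ and $y=\f(x)$, so that $\O_*(\f)$ makes $\O_x$ a right $\O_y$-super-comodule. Since $\O_y$ is irreducible, \Cref{super.tak.a.2.2} says that $\f$ is flat at $x$ exactly when $\O_x$ is \emph{free}, i.e.\ $\O_x\cong\Ws\otimes\O_y$ for some super vector space $\Ws$. Base change is governed by $\O_*(\bX\otimes F)=\O_*(\bX)\otimes_\K F$, and I will use repeatedly that, as $F$ is a free (hence faithfully flat) $\K$-module, the functor $-\otimes_\K F$ is exact and commutes with cotensor products, so that $F\otimes(\M\square_{\O_y}\O_x)\cong(F\otimes\M)\square_{\O_y\otimes F}(\O_x\otimes F)$, as well as with the decomposition of a super-coalgebra into its irreducible components.

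For the forward implication of (i), assume $\f$ is flat at $x$, so $\O_x\cong\Ws\otimes\O_y$. Tensoring with $F$ gives an isomorphism $\O_x\otimes F\cong\Ws\otimes(\O_y\otimes F)$ of $(\O_y\otimes F)$-comodules. Decomposing $\O_y\otimes F=\bigoplus_j\O_{w_j}$ into irreducible components, the induced comodule decomposition exhibits $\O_z$, where $w:=(\f\otimes F)(z)$, as a direct summand of the free $\O_w$-comodule $\Ws\otimes\O_w$; a summand of a free comodule is flat, hence free by \Cref{super.tak.a.2.2}, so $\f\otimes F$ is flat at $z$.

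The substantive direction is the converse in (i): assuming $\f\otimes F$ flat at $z$, I must show $-\square_{\O_y}\O_x$ preserves epimorphisms of right $\O_y$-super-comodules. Given an epimorphism $\M\twoheadrightarrow\M''$, let $\mathcal{C}$ be the cokernel of $\M\square_{\O_y}\O_x\to\M''\square_{\O_y}\O_x$, an $\O_x$-super-comodule; viewing $\mathcal{C}$ as a $\bX_x$-supermodule, one has $\mathcal{C}_x=\mathcal{C}$ because $\O_x$ is the coordinate coalgebra of the single point $x$, so it suffices to prove $\mathcal{C}=0$. Applying the exact functor $-\otimes_\K F$ and the compatibility of base change with cotensor, $F\otimes\mathcal{C}$ is the analogous cokernel computed over $\O_y\otimes F$; localizing at $z$ (i.e.\ cotensoring with the direct summand $\O_z$ of $\O_x\otimes F$, an exact operation) and using that $-\square_{\O_w}\O_z$ is exact by the flatness of $\f\otimes F$ at $z$, together with the fact that the localized map $(F\otimes\M)_w\to(F\otimes\M'')_w$ is still an epimorphism, yields $(F\otimes\mathcal{C})_z=0$. \Cref{Fact.1.1} then gives $\mathcal{C}_x=0$, hence $\mathcal{C}=0$, so $\f$ is flat at $x$. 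I expect the bookkeeping of this last step — reconciling ``localize at $z$'' with ``cotensor with the component $\O_z$'' and checking that base change, cotensor, and passage to the cokernel all commute — to be the main obstacle, and it is precisely there that \Cref{Fact.1.1} and the faithful exactness of $-\otimes_\K F$ do the work, following \cite[Proposition~2.3]{takeuchi1977formal}.

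Part (ii) then follows formally. Flatness is a pointwise property, and the projection $|\bX\otimes F|\to|\bX|$ is surjective since $\O_x\otimes F\neq0$ always has a component over $x$; hence (i) gives that $\f$ is flat iff $\f\otimes F$ is flat (apply the forward direction at every $x$, and for the converse choose for each $x$ some $z$ lying over it). For the faithfully flat case I use that faithful flatness is equivalent to flatness together with surjectivity of the underlying map, so it remains to check that $|\f|$ is surjective iff $|\f\otimes F|$ is. As $|\f|=|\f_{\mathrm{bos}}|$ by \Cref{f_eq_f_red} and bosonic reduction commutes with base change, this reduces to the classical fact that surjectivity of a morphism on underlying spaces is both stable under, and detected by, the faithfully flat field extension $F\mid\K$, which completes the proof.
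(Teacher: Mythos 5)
Your proof is correct and, on the one substantive point, coincides with the paper's: the converse of (i) is established in both by forming the cokernel $\mathbf{C}$ of the cotensored epimorphism, using flatness of $\f\otimes F$ at $z$ to get $(F\otimes\mathbf{C})_z=0$, and invoking \Cref{Fact.1.1} to conclude $\mathbf{C}_x=0$. The remaining parts (the forward direction via freeness from \Cref{super.tak.a.2.2}, and part (ii) via the pointwise criterion and surjectivity) are exactly what the paper dismisses as ``straightforward,'' and your filled-in details are sound.
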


\begin{proof}
    Suppose that the map $\f\otimes F$ is flat at $z$. Let $\M \to \N$ be a surjective map of $\bY$-supermodules, and $\mathbf{C}$ the cokernel of $\f^*(\M) \to \f^*(\N)$. Observe that the sequence $(F \otimes \f^*(\M))_{z} \to (F \otimes \f^*(\N))_{z} \to 0$ is exact by the assumption, or equivalently $(F \otimes \mathbf{C})_{z} =(F\otimes \mathbf{C})\square_{\O_*(\bX\otimes F)}\O_z= 0.$  By \Cref{Fact.1.1}, $\mathbf{C}_x = 0$, and $\f$ is flat at $x$. The rest is straightforward.
\end{proof}

\begin{theorem}[Faithfully Flat Descent] If $\f:\bX\rightarrow\bY$ is a faithful flat map of formal superschemes, then the diagram \[
\xymatrix{\bX\times_\bY \bX\ar@<-1ex>[rr]_{\pi_2}\ar@<+1ex>[rr]^{\pi_1}&& \bX\ar[rr]^{\f} &&\bY}
\] is exact in the category of formal superschemes.  
\end{theorem}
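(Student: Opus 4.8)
The plan is to transport the entire statement to $\mathsf{SCoalg}_\K$ through the equivalence $\sp^*$ of \Cref{3.16} and then run Takeuchi's descent argument with the super sign conventions of \Cref{sec.prel}. Write $\A=\O_*(\bX)$, $\B=\O_*(\bY)$ and $u=\O_*(\f)\colon\A\to\B$. Since $\O_*$ is a \emph{covariant} equivalence sending fiber products to cotensor products, so that $\O_*(\bX\times_\bY\bX)=\A\square_\B\A$, and since any equivalence preserves and reflects coequalizers, exactness of the fork is equivalent to the assertion that
\[
\A\square_\B\A\ \overset{p_1}{\underset{p_2}{\rightrightarrows}}\ \A\ \xrightarrow{\ u\ }\ \B ,\qquad p_i=\O_*(\pi_i),
\]
is a coequalizer in $\mathsf{SCoalg}_\K$ (the pair $(p_1,p_2)$ is a kernel pair by construction of the fiber product). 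Two reductions are immediate: $\f\pi_1=\f\pi_2$, hence $up_1=up_2$, is automatic; and because $\f$ is faithfully flat it is flat and strictly surjective, so $u$ is a surjection of super-coalgebras, hence an epimorphism. Therefore any factorization through $u$ is automatically unique, and only the \emph{existence} half of the universal property remains to be proved.

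For existence I would exploit that $\A$ is faithfully coflat as a $\B$-super-comodule. Decomposing $\B=\bigoplus_i\B_i$ into its irreducible components and correspondingly $\A=\A\square_\B\B=\bigoplus_i(\A\square_\B\B_i)$, each summand is a flat super-comodule over the irreducible super-cocommutative super-coalgebra $\B_i$, hence free by \Cref{super.tak.a.2.2}; consequently $\A$ is faithfully coflat over $\B$ and $-\square_\B\A$ is faithfully exact. This is exactly the input of \cite[Theorem~2.4]{takeuchi1977formal}: faithful coflatness makes the relevant beginning of the super-cobar sequence split-exact after applying $-\square_\B\A$, and faithful exactness then descends the splitting. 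From this one concludes that the largest quotient super-coalgebra of $\A$ on which $p_1$ and $p_2$ agree is precisely $\B$; equivalently, every super-coalgebra map $v\colon\A\to\mathbf{C}$ with $vp_1=vp_2$ factors (uniquely) through $u$. Since $\sp^*$ reflects coequalizers, transporting back yields the exactness of the original fork. (Equivalently, one may dualize to the pseudocompact superalgebras $\B^*\hookrightarrow\A^*$, where faithful coflatness becomes ordinary faithful flatness and the sequence is the split Amitsur complex; this makes the descent mechanism transparent.)

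The main obstacle is the passage from faithful coflatness to the split-exactness of the descent sequence while correctly threading the Koszul signs produced by the twisting map $c_{-,-}$ in the cotensor product and in the $R$-linear extensions of $\Delta$ and $\epsilon$. The decomposition into irreducible components and \Cref{super.tak.a.2.2} supply the freeness, hence the splittings, but one must still check that super-cocommutativity makes the comparison maps genuine morphisms of super-comodules and that the counit provides a section with the correct parity; this is where super-cocommutativity and $\mathrm{char}\,\K\neq 2$ enter. Once the split-exactness of the super-cobar sequence is secured, the coequalizer property, and with it the exactness of the fork, follows formally from the equivalence $\sp^*$, the remainder of the argument being identical to the non-super case.
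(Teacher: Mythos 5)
Your proposal is correct and follows essentially the same route as the paper: both arguments pass to the super-coalgebra side via the equivalence $\sp^*$ and rest on the super-Amitsur (cobar) complex $0 \leftarrow M \leftarrow M\square_\B\A \leftarrow M\square_\B\A\square_\B\A \leftarrow \cdots$, which is contractible after applying $-\square_\B\A$ and hence acyclic by faithful exactness, the case $M=\B$ yielding the coequalizer after transporting back through $\sp^*$. The only difference is cosmetic: your re-derivation of the faithful coflatness of $\A$ over $\B$ via irreducible components and \Cref{super.tak.a.2.2} is redundant, since in this paper faithful flatness of $\f$ is \emph{defined} as faithful exactness of $-\square_{\O_*(\bY)}\O_*(\bX)$.
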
 

\begin{proof} We proceed as in \cite[Theorem 2.4]{takeuchi1977formal}. Let $M$ be a right $\B$-super-comodule and $\psi:\A \to \B$ a map of super-coalgebras. For each $n\geq0$, let 

\[
d_i^n = \underbrace{Id_M\, \Box \cdots \Box Id_M\,}_{i+1} \Box \psi \Box \underbrace{Id_M\, \Box \cdots \Box Id_M\,}_{n-i} : M \Box_\B \underbrace{\A \Box_\B \cdots \Box_\B \A}_{n+1} \to M \Box_\B \underbrace{\A \Box_\B \cdots \Box_\B \A}_{n}.
\]
and consider
\[
\partial_n = \sum_{i=0}^n (-1)^i d_n^i.
\]
More explicitly, 
 \begin{align*}
     \partial_0(m\square_\B \A)&=m\cdot\psi(a)\\
     \partial_1(m\square_\B a_1\square_\B a_2)&=(d_0^1-d_1^1)(m\square_\B a_1\square_\B a_2)=m \square_\B \psi(a_1) \square_\B a_2 
- m \square_\B a_1 \square_\B \psi(a_2)\\
&\ \vdots\ \\
\partial_n(m \square_\B a_1 \square_\B \cdots \square_\B a_n)
&= \sum_{i=0}^n (-1)^i \, m \square_\B a_1 \square_\B \cdots \square_\B a_{i-1} \square_\B \psi(a_i) \square_\B a_{i+1} \square_\B \cdots \square_\B a_n.
 \end{align*}
Then, we have a complex $$0 \leftarrow   M \xleftarrow{\partial^0} M \Box_\B \A \xleftarrow{\partial^1} M \Box_\B \A \Box_\B \A \xleftarrow{\partial^2} \cdots$$

This complex is acyclic after applying the functor $- \Box_\B \A$ (\cite[p.1494]{takeuchi1977formal}). Hence, it is acyclic if $\psi$ is faithfully flat. If $M = \B$, applying the functor $\sp^*$, we obtain the desired exact sequence.
\end{proof}

\section{Locally algebraic formal superschemes}\label{sec.loc.alg}

Thorough this section, and unless we explicitly state it otherwise, $F\mid \K$ is a field extension and formal superschemes $\bX, \bY, \ldots$ are considered over the field $\K$.  

\begin{definition}
\ 

    \begin{enumerate}
        \item[i)]  We say that $\bX$ is \textit{algebraic at} $x\in |\bX|$ if the super-coalgebra $\O_x$ is of finite type.
        \item[ii)] We say that $\bX$ is \textit{locally algebraic} if $\bX$ is algebraic in every point, or equivalently, $\O_*(\bX)$ is locally finite.
    \end{enumerate}    
\end{definition}

\begin{proposition}\label{prop.prod.of.loca.alg}
    The product of locally algebraic superschemes is locally algebraic. 
\end{proposition}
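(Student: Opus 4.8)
The plan is to reduce the statement to the super-coalgebraic side and invoke the product-tensor dictionary established earlier. The key observation is that, under the equivalence $\sp^*$ of \Cref{3.16}, the product of formal superschemes corresponds to the tensor product of super-coalgebras, that is, $\O_*(\bX\times\bY)\simeq\O_*(\bX)\otimes\O_*(\bY)$. Moreover, by the definition of locally algebraic, $\bX$ is locally algebraic precisely when $\O_*(\bX)$ is locally finite. So the whole proposition amounts to showing that if $\A=\O_*(\bX)$ and $\B=\O_*(\bY)$ are locally finite super-cocommutative super-coalgebras, then $\A\otimes\B$ is again locally finite.

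First I would translate ``algebraic at every point'' into the statement that every irreducible component $\A_i$ of $\A$ is of finite type (since $\O_x=\O_*(\bX_x)$ is the irreducible component at $x$, and $\bX$ decomposes as $\coprod_{x\in|\bX|}\bX_x$ with $\A=\bigoplus_i\A_i$). Then I would use the product decomposition: since $\A=\bigoplus_i\A_i$ and $\B=\bigoplus_j\B_j$, we have $\A\otimes\B=\bigoplus_{i,j}\A_i\otimes\B_j$. By \Cref{tensor.fin.type}, each $\A_i\otimes\B_j$ is of finite type whenever $\A_i$ and $\B_j$ are. A direct sum of finite-type (hence locally finite) super-coalgebras is locally finite by part ii) of the first unlabeled proposition in \Cref{Sub.sec.2.5}. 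Thus $\A\otimes\B$ is locally finite, which is exactly the condition that $\bX\times\bY$ is locally algebraic.

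Concretely, the main steps in order are: (1) identify $\O_*(\bX\times\bY)\simeq\O_*(\bX)\otimes\O_*(\bY)$ via \Cref{3.16} and the definition of the product on the super-coalgebra side; (2) decompose each coalgebra into its irreducible components and note that local algebraicity is equivalent to each component being of finite type; (3) apply \Cref{tensor.fin.type} componentwise; and (4) conclude that the resulting direct sum is locally finite, invoking the closure of local finiteness under direct sums. The one point demanding care is verifying that the irreducible-component decomposition of a tensor product refines into tensor products of the individual components, and that being of finite type at every component is genuinely equivalent to local finiteness of the whole coalgebra; this relies on the characterization in \Cref{prop.equiv.fint.type} (finite type $\Leftrightarrow$ locally finite with finite-dimensional coradical) applied irreducible-component-wise.

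The step I expect to be the main obstacle is the bookkeeping in (2): matching the geometric decomposition $\bX=\coprod_x\bX_x$ with the algebraic decomposition $\A=\bigoplus_i\A_i$ into irreducible components, and confirming that ``algebraic at every $x$'' is the same as ``each $\A_i$ is of finite type.'' Once that dictionary is in place, the finite-type tensor lemma \Cref{tensor.fin.type} does the real work, and the remainder is formal. I would therefore spend most of the care on isolating the irreducible components correctly and on checking that local finiteness can indeed be tested componentwise, after which the conclusion follows immediately.
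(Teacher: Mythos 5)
Your proposal is correct and follows essentially the same route as the paper's proof: the paper also reduces to the pointwise statement that for $z$ lying over $(x,y)$ one has $\O_z=\O_x\otimes\O_y$ with $\O_x,\O_y$ of finite type, and then invokes \Cref{tensor.fin.type} to conclude. Your extra bookkeeping with the direct-sum decomposition into irreducible components is just a more explicit version of the same argument, with the same key lemma doing the work.
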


\begin{proof}
    Let $\bX$ and $\bY$ be formal superschemes algebraic at $x$ and $y$, respectively. Let $z\in\bX\times\bY$ lying over $(x, y)$, then $\O_x$ and $\O_y$ are of finite type, by definition. Thus, by \Cref{tensor.fin.type}, $\O_z=\O_x\otimes\O_y$ is of finite type and $\bX\times\bY$ is algebraic at $z$. 
\end{proof}

\begin{proposition}\label{T.3.1}
    If $z\in|\bX\otimes F|$ lies over $x\in|\bX|$, then $\bX$ is algebraic at $x$ if and only if $\bX\otimes F$ is algebraic at $z$. In particular, $\bX$ is locally algebraic if and only if $\bX\otimes F$ is.
\end{proposition}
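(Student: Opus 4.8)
The plan is to reduce the statement to a question about a single irreducible super-coalgebra, and then to detect ``of finite type'' through local finiteness, which I will show is stable under the base change $-\otimes_\K F$. Writing $\A=\O_*(\bX)=\bigoplus_{x'}\O_{x'}$ and using $\bX\otimes F=\sp^*(\A\otimes_\K F)$, the fibre of $|\bX\otimes F|\to|\bX|$ over $x$ consists exactly of the irreducible components of $\O_x\otimes_\K F$, so that $\O_z=(\O_x\otimes_\K F)_z$. Thus $\bX$ is algebraic at $x$ iff $\O_x$ is of finite type, and $\bX\otimes F$ is algebraic at $z$ iff the component $(\O_x\otimes_\K F)_z$ is. Since $\O_x$ and each $(\O_x\otimes_\K F)_z$ are irreducible, their coradicals are simple and hence finite-dimensional, so by the equivalence of i) and iii) in \Cref{prop.equiv.fint.type} ``of finite type'' coincides with ``locally finite'' for all the super-coalgebras in play. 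This is the reduction I would carry out first.

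Next I would record the key input that the wedge commutes with the flat base change $-\otimes_\K F$: since $\Xs\sqcap\Ys$ is the kernel of the map $\A\to\A\otimes\A\to\A/\Xs\otimes\A/\Ys$ and $F$ is $\K$-flat, one gets $(\Xs\sqcap\Ys)\otimes_\K F=(\Xs\otimes_\K F)\sqcap(\Ys\otimes_\K F)$ inside $\A\otimes_\K F$. As every finite-dimensional $F$-subspace of $\A\otimes_\K F$ lies in some $\Xs\otimes_\K F$ with $\Xs\subseteq\A$ finite-dimensional over $\K$, and as $\dim_\K(\Xs\sqcap\Ys)=\dim_F((\Xs\sqcap\Ys)\otimes_\K F)$, monotonicity of the wedge yields at once that $\A$ is locally finite over $\K$ iff $\A\otimes_\K F$ is locally finite over $F$. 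Applied to $\A=\O_*(\bX)$, together with $\O_*(\bX\otimes F)=\O_*(\bX)\otimes_\K F$ and the definition of \emph{locally algebraic} as local finiteness of $\O_*(-)$, this settles the ``In particular'' clause immediately.

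For the pointwise equivalence I would first observe that $\O_x\otimes_\K F$ has only finitely many irreducible components: its coradical $\cod(\O_x\otimes_\K F)$ is contained in $\cod(\O_x)\otimes_\K F$ by the finest-coalgebra-filtration property, and the latter is finite-dimensional, so there are finitely many simple subcoalgebras. The forward implication is then clear: if $\O_x$ is locally finite so is $\O_x\otimes_\K F$, and a sub-super-coalgebra of a locally finite super-coalgebra is again locally finite (one of the two stability properties from \Cref{Sub.sec.2.5}), in particular $(\O_x\otimes_\K F)_z$. Because $\O_x\otimes_\K F=\bigoplus_z(\O_x\otimes_\K F)_z$ is a \emph{finite} direct sum, the converse reduces to the assertion that all components are simultaneously of finite type as soon as one of them is.

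This uniformity across components is the main obstacle, and I would treat it on the dual side. By \Cref{prop.equiv.fint.type}, $\O_x$ is of finite type iff $\O_x^*$ is a Noetherian complete local superalgebra, whose residue field is $K_x=\kappa(x)^*$, a finite extension of $\K$ (as in the proof of \Cref{super.tak.a.2.2}); for such rings Noetherianity is equivalent to finiteness of the cotangent space $\m/\m^2$. The components $(\O_x\otimes_\K F)_z$ are dual to the localizations of $\O_x^*\otimes_\K F$ at its finitely many maximal ideals, which lie over the local factors $R_j$ of $K_x\otimes_\K F$; writing $L_j$ for the residue field of $R_j$ and $\mathfrak{n}_j$ for its maximal ideal, flatness of $\O_x^*\to(\O_x^*\otimes_\K F)_{\m_j}$ gives on even parts a short exact sequence $0\to\m/\m^2\otimes_{K_x}L_j\to\m_j/\m_j^2\to\mathfrak{n}_j/\mathfrak{n}_j^2\to0$, in which $\mathfrak{n}_j/\mathfrak{n}_j^2$ is finite-dimensional since $R_j$ is finite-dimensional over $F$. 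Hence each component has finite cotangent dimension iff $\dim_{K_x}\m/\m^2<\infty$, a condition independent of $j$; so one component is of finite type iff all are iff $\O_x$ is, which closes the converse. Since this cotangent computation is precisely the super-analogue of Takeuchi's argument, one may alternatively invoke \cite{takeuchi1977formal} for the even part and add the remark that $(\O_x^*)\od$ is finitely generated over $(\O_x^*)\ev$.
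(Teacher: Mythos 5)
Most of your reduction is sound and close in spirit to what the paper (via Takeuchi) does: passing to irreducible components, identifying ``of finite type'' with ``locally finite'' for irreducible super-coalgebras via \Cref{prop.equiv.fint.type}, checking that the wedge commutes with the flat base change $-\otimes_\K F$, and reducing the converse to the claim that if one component of $\O_x\otimes_\K F$ is of finite type then all are. The forward implication and the ``In particular'' clause are fine.

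The gap is in the last step. The sequence
$0\to\m/\m^2\otimes_{K_x}L_j\to\m_j/\m_j^2\to\mathfrak{n}_j/\mathfrak{n}_j^2\to0$
is only right exact in general: the kernel of the first map is $(\m S\cap\m_j^2)/\m_j\,\m S$, which need not vanish when the residue field extension $K_x/\K$ is inseparable. Concretely, take $\K=\mathbb{F}_p(t)$, $F=K_x=\K(t^{1/p})$, and let $R$ be the completion of $\K[x]$ at the maximal ideal $(x^p-t)$, a complete DVR with uniformizer $\pi=x^p-t$ and residue field $K_x$. Then $R\,\widehat{\otimes}_\K F\simeq F[[v]]$ with $v=x-t^{1/p}$, and $\pi=v^p\in\m_j^2$, so the left-hand map is zero rather than injective. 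Since the implication you actually need for the converse is ``$\dim_{L_j}\m_j/\m_j^2<\infty$ for \emph{one} $j$ $\Rightarrow$ $\dim_{K_x}\m/\m^2<\infty$'', and that is exactly the direction killed by the failure of left exactness, the argument does not close. (A second, smaller imprecision: the dual of $\O_x\otimes_\K F$ is the completed tensor product $\O_x^*\widehat{\otimes}_\K F$, not $\O_x^*\otimes_\K F$; and the closing remark that $(\O_x^*)\od$ is finitely generated over $(\O_x^*)\ev$ presupposes the Noetherianity you are trying to establish.) The statement itself survives: $\O_x^*\to\O_z^*$ is faithfully flat, because $\O_z^*$ is a direct summand of the flat $\O_x^*$-module $(\O_x\otimes_\K F)^*$ that is nonzero modulo $\m$, so Noetherianity descends from $\O_z^*$ to $\O_x^*$ and \Cref{prop.equiv.fint.type} v)\,$\Rightarrow$\,i) finishes the converse. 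Alternatively, follow the paper, which deduces the converse from \Cref{Fact.1.1} by transcribing the proof of Proposition 3.1 of Takeuchi's \emph{Formal schemes over fields}; either route replaces your cotangent-space computation.
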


\begin{proof} Use \Cref{Fact.1.1} and \Cref{prop.equiv.fint.type} to copy the proof from \cite[Proposition 3.1]{takeuchi1977formal}.    
\end{proof}

\subsection{LOFP and OFP maps}

\begin{definition}
    Let $\mathbf{f}:\bX\to\bY$ be a map of formal superschemes. 
    
    \begin{enumerate}
        \item[i)] $\f$ is \textit{locally of finite presentation} (or LOFP, for brief) if there is a locally algebraic formal superscheme $\bW$ and an immersion $i$ such that  the following diagram commutes 
    \[
    \xymatrix{\bX\ar[rr]^{i}\ar[drr]_{\mathbf{f}}&&\bW\times\bY\ar[d]^{\pi_2}\\&&\bY}
    \]
    \item[ii)]  $\mathbf{f}$ is \textit{of finite presentation} (or briefly, OFP) at $x\in|\bX|$ if $\mathbf{f}:\bX_x\to\bY_{\mathbf{f}(x)}$ is LOFP.
    \end{enumerate}
    
\end{definition}

\begin{proposition}
    A map of formal superschemes is LOFP if and only if it is OFP everywhere.
\end{proposition}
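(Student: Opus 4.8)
The plan is to exploit the canonical decomposition of a formal superscheme into its local components, $\bX=\coprod_{x\in|\bX|}\bX_x$ and $\bY=\coprod_{y\in|\bY|}\bY_y$, and to handle the two implications separately. Throughout I will use the following standard facts from the preceding sections: each component inclusion $\bX_x\hookrightarrow\bX$ (resp. $\bY_y\hookrightarrow\bY$) is an open-and-closed immersion; a map is a closed immersion precisely when its image under $\O_*$ is injective, and every immersion of formal superschemes is closed; $\f$ carries the component $\bX_x$ into the single component $\bY_{\f(x)}$; and $\O_*$ sends coproducts to direct sums, $\O_*(\coprod_i\sp^*\A_i)=\bigoplus_i\O_*(\sp^*\A_i)$, and products to tensor products, $\O_*(\bW\times\bY)=\O_*(\bW)\otimes\O_*(\bY)$.

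For the implication LOFP $\Rightarrow$ OFP everywhere, suppose $\f$ is LOFP, witnessed by a locally algebraic $\bW$ and an immersion $i:\bX\to\bW\times\bY$ with $\pi_2\circ i=\f$. Fixing $x$ and setting $y=\f(x)$, I would first restrict $i$ to the clopen component $\bX_x$; the composite $\bX_x\hookrightarrow\bX\to\bW\times\bY$ is again an immersion. Since $\f(\bX_x)\subseteq\bY_y$, its image lands in the clopen subfunctor $\pi_2^{-1}(\bY_y)=\bW\times\bY_y$, so it factors through an immersion $i_x:\bX_x\to\bW\times\bY_y$ with $\pi_2\circ i_x=\f|_{\bX_x}$. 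As $\bW$ is locally algebraic, this exhibits $\f:\bX_x\to\bY_y$ as LOFP, that is, $\f$ is OFP at $x$; since $x$ is arbitrary, $\f$ is OFP everywhere. This direction is essentially formal.

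For the converse, I would assemble the local witnesses. For each $x$, OFP at $x$ provides a locally algebraic $\bW_x$ and an immersion $i_x:\bX_x\to\bW_x\times\bY_{y_x}$, where $y_x=\f(x)$, with $\pi_2\circ i_x=\f|_{\bX_x}$. Set $\bW:=\coprod_{x\in|\bX|}\bW_x$ and compose each $i_x$ with the canonical clopen immersion $\bW_x\times\bY_{y_x}\hookrightarrow\bW\times\bY$ to obtain $j_x:\bX_x\to\bW\times\bY$. The universal property of the coproduct $\bX=\coprod_x\bX_x$ then yields a unique $i:\bX\to\bW\times\bY$ with $i|_{\bX_x}=j_x$, and compatibility of $\pi_2$ with the component inclusions gives $\pi_2\circ i=\f$ componentwise, hence globally.

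The hard part will be to verify that this glued $i$ is an immersion and that $\bW$ is locally algebraic, i.e. that both conditions survive the gluing. For the latter, $\O_*(\bW)=\bigoplus_x\O_*(\bW_x)$ is locally finite, being a direct sum of locally finite super-coalgebras, so $\bW$ is locally algebraic. For the former, I would pass to coalgebras: under the identification $\O_*(\bW\times\bY)=\bigoplus_x\bigl(\O_*(\bW_x)\otimes\O_*(\bY)\bigr)$, the map $\O_*(i)$ respects the $x$-indexed decomposition, and on the $x$-th summand it equals $\O_*(j_x)$, the composite of the injection $\O_*(i_x)$ with the injection induced by the clopen immersion (injectivity of the latter uses that a tensor product of injective $\K$-linear maps is injective). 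Since distinct components $\bX_x$ map into distinct summands indexed by $x$, the map $\O_*(i)=\bigoplus_x\O_*(j_x)$ is injective, so $i$ is a closed immersion; hence $\f$ is LOFP. The one genuine subtlety is precisely this bookkeeping: indexing $\bW$ as a coproduct by the points of $\bX$ so that the local immersions land in pairwise-disjoint clopen parts and therefore glue to a single global immersion.
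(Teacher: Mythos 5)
Your proof is correct and follows essentially the same route as the paper's: for the forward direction, restrict the global immersion to each clopen component $\bX_x$ (the paper shrinks $\bW$ to the component $\bW_{i(x)}$, but this is immaterial), and for the converse, take $\bW=\coprod_x\bW_x$ and glue the local immersions into $\bW\times\bY$. Your extra verification that the glued map is an immersion and that the coproduct stays locally algebraic is a welcome addition of detail the paper leaves implicit.
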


\begin{proof}  If $\f:\bX\to \bY$ is LOFP, then there is some locally algebraic formal superscheme $\bW$ and an immersion $i:\bX\to\bW\times\bY$ such that 
\[
\f=\pi_2\circ i:\bX\to\bW\times\bY\to\bY.
\]
If $x\in|\bX|$, then we get an immersion $i|_{\bX_x}:\bX_x\to\bW_{i(x)}\times\bY_{\f(x)}$ with 
\[
\f=\pi_2\circ i|_{\bX_x}:\bX_x\to\bW_{i(x)}\times\bY_{\f(x)}\to\bY_{\f(x)}.
\]
Hence $\f$ is OFP at $x$, for all $x\in|\bX|$.

Conversely, suppose that $\f$ is OFP everywhere. Then, for each $x\in|\bX|$, there is some locally algebraic formal superscheme $\bW(x)$ and an immersion $i(x)$ such that 

\[
\f=\pi_2\circ i(x):\bX_x\to\bW(x)\times \bY_{\f(x)}\to\bY_{\f(x)}.
\]
We thus have an immersion 

$$i:\bX=\coprod_{x\in|\bX|}\bX_x\to\bW\times\bY=\left(\coprod_{x\in|\bX|}\bW(x)\right)\times\bY$$ such that $\f=\pi_2\circ i:\bX\to\bW\times\bY\to\bY$, and $\f$ is then LOFP. 
\end{proof}

\begin{proposition}\label{Prop.equiv.LOPF.and.OFP}
    \ 

    \begin{enumerate}

        \item[\rm i)] If $\mathbf{f}:\bX\to\bY$ is LOFP, then $\pi_\bW:\bW\times_\bY\bX\to\bW$ is, for each $\bW\to\bY$.
        \item[\rm ii)] If $\mathbf{f}:\bX\to\bY$ and $\mathbf{g}:\bY\to\bW$ are LOFP, then $\mathbf{g}\circ\mathbf{f}:\bX\to\bY$ is.
        \item[\rm iii)] If $\mathbf{g}\circ\mathbf{f}:\bX\to\bY\to\bW$ is LOFP, then $\mathbf{f}:\bX\to\bY$ is.
    \end{enumerate}
\end{proposition}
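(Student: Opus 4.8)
The plan is to exploit the observation, recorded immediately before this proposition, that immersions of formal superschemes are exactly the closed immersions, which by the equivalence ``monomorphism $\Leftrightarrow$ closed immersion $\Leftrightarrow$ $\O_*$ injective'' are precisely the monomorphisms. All three assertions then reduce to three elementary categorical facts about monomorphisms---stability under base change, stability under composition, and left-cancellability (if $\mathbf{h}\circ\mathbf{k}$ is a monomorphism then so is $\mathbf{k}$)---together with \Cref{prop.prod.of.loca.alg} and the standard identity $\bW\times_\bY(\bZ\times\bY)\cong\bZ\times\bW$ valid for formal superschemes. Throughout I would write the auxiliary locally algebraic superscheme from the definition of LOFP as $\bZ$, to avoid clashing with the $\bW$ occurring in the statement.

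For i), unwinding the definition, LOFP-ness of $\f$ yields a locally algebraic $\bZ$ and an immersion $i:\bX\to\bZ\times\bY$ with $\f=\pi_\bY\circ i$. Pulling $i$ back along the structure map $\bW\to\bY$ produces $\bW\times_\bY\bX\to\bW\times_\bY(\bZ\times\bY)\cong\bZ\times\bW$, which is again a monomorphism, hence an immersion; composing it with the projection $\bZ\times\bW\to\bW$ recovers exactly $\pi_\bW$. Since $\bZ$ is locally algebraic, this exhibits $\pi_\bW$ as LOFP.

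For ii), I would choose locally algebraic $\bZ_1,\bZ_2$ and immersions $i_1:\bX\to\bZ_1\times\bY$ and $i_2:\bY\to\bZ_2\times\bW$ with $\f=\pi_\bY\circ i_1$ and $\g=\pi_\bW\circ i_2$. The composite $(\id_{\bZ_1}\times i_2)\circ i_1:\bX\to(\bZ_1\times\bZ_2)\times\bW$ is a composition of immersions---the first factor being a base change of $i_2$---hence an immersion; by \Cref{prop.prod.of.loca.alg} the superscheme $\bZ_1\times\bZ_2$ is locally algebraic, and a short diagram chase shows that projecting to $\bW$ returns $\g\circ\f$. Thus $\g\circ\f$ is LOFP.

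For iii), LOFP-ness of $\g\circ\f$ provides a locally algebraic $\bZ$ and an immersion $j:\bX\to\bZ\times\bW$ with $\g\circ\f=\pi_\bW\circ j$. I would set $i'=(\pi_\bZ\circ j,\f):\bX\to\bZ\times\bY$ and check directly that $(\id_\bZ\times\g)\circ i'=j$; since $j$ is a monomorphism and monomorphisms are left-cancellable, $i'$ is a monomorphism, i.e.\ an immersion, and as $\pi_\bY\circ i'=\f$ with $\bZ$ locally algebraic, $\f$ is LOFP. The only non-formal inputs across the three parts are that base change of a monomorphism is again a monomorphism and the product/fiber-product identity $\bW\times_\bY(\bZ\times\bY)\cong\bZ\times\bW$, both secured by the good behaviour of fiber products of formal superschemes already established; accordingly I expect the main obstacle to be purely bookkeeping---keeping track of the several projections and verifying the asserted composites, which the commuting-triangle form of the definition of LOFP makes slightly delicate.
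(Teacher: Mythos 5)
Your arguments for i) and ii) coincide with the paper's: in both cases the paper also pulls back (resp.\ combines) the witnessing immersions, writing the resulting maps out on $R$-points, and invokes \Cref{prop.prod.of.loca.alg} for the product $\bZ_1\times\bZ_2$ in ii). For iii), however, you take a genuinely different route. The paper follows Takeuchi and factors $\f$ through the graph, $\f=\pi_2\circ(\f\times_\bW\bY)\circ(\id_\bX,\f)$, where $\pi_2\circ(\f\times_\bW\bY):\bX\times_\bW\bY\to\bY$ is the base change of $\g\circ\f$ along $\g$ and $(\id_\bX,\f)$ is an immersion (hence LOFP); iii) is then deduced from i) and ii). You instead build the witnessing immersion directly: from $j:\bX\to\bZ\times\bW$ with $\pi_\bW\circ j=\g\circ\f$ you set $i'=(\pi_\bZ\circ j,\f)$ and observe $(\id_\bZ\times\g)\circ i'=j$, so $i'$ is a monomorphism by left-cancellability, hence an immersion by the paper's earlier equivalence \emph{monomorphism} $\Leftrightarrow$ \emph{closed immersion}. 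Both arguments are correct; yours is more self-contained (it reuses the same locally algebraic $\bZ$ and needs no appeal to parts i) and ii), nor to the unproved assertion that the graph map $(\id_\bX,\f)$ is LOFP), while the paper's has the advantage of exhibiting the standard cartesian-diagram mechanism that recurs elsewhere in this circle of ideas.
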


\newpage

\begin{proof}
\ 

\begin{enumerate}
    \item[i)] Let $\Ub $ be a locally algebraic formal superscheme and $i_\bX:\bX\to\Ub \times\bY$ proving that $\f$ is LOFP. We thus get an immersion $i:\bW\times_\bY \bX\to\Ub \times\bW$, where, for each $R$, $(w, x)\in\bW(R)\times_{\bY(R)}\bX(R)$ is mapped to $(\pi_{\Ub }(R
    )(i_\bX(R)(x)), w)$. Then,  $\pi_\bW=\pi_2\circ i$, and $\pi_\bW$ is LOFP. 
   \item[ii)] Let $\Ub $ be a locally algebraic formal superscheme and $i_\bX:\bX\to\Ub \times\bY$ an immersion proving that $\f$ is LOFP. Let $\V$ and $i_\bY:\bY\to\V\times\bW$ the corresponding data for $\g$. We thus have a locally algebraic formal superscheme $\Ub \times\V$ (\Cref{prop.prod.of.loca.alg}) and an immersion $i:\bX\to(\Ub \times\V)\times\bY $, where, for each $R$, if $i_\bX(R):x\mapsto (x_1, x_2)$ and $i_\bY(R):x_2\mapsto(y_1, y_2)$, then $i(R):x\mapsto((x_1, y_1), (\g\circ\f)(R)(x_2))$. Then,  $\g\circ  
    \f=\pi_2\circ i$, and $\g\circ\f$ is LOFP. 
    \item[iii)] Here we proceed as in \cite[p.1496]{takeuchi1977formal}. Consider the following diagram composed of cartesian squares 
    \[
    \xymatrix{\bX\ar[rr]^{\f}&&\bY\ar[rr]^{\mathbf{g}}&&\bW\\&&&&\\\bX\times_\bW\bY\ar[uu]^{\pi_1}\ar[rr]^{\f\times_\bW\bY}&&\bY\times_\bW\bY\ar[uu]^{\pi_1}\ar[rr]^{\pi_2}&&\bY\ar[uu]_{\mathbf{g}}\\&&&&\\
    \bX\ar[uu]^{( Id_\bX, \f)}\ar[rr]_{\f}&&\bY\ar[uu]_{( Id_\bY,  Id_\bY)}&&
}    \]
Observe that if $\mathbf{g}\circ\f$ is LOFP, then $\pi_2\circ(\f\times_\bW\bY)$. Also, note that $( Id_\bX, \f)$ is LOFP. Therefore $$\f=\pi_2\circ(\f\times_\bW\bY)\circ( Id_\bX, \f)$$ is LOFP, by ii).
\end{enumerate}
\end{proof}

\begin{proposition}\label{P.5.7}
    Let $\mathbf{f}:\bX\to\bY$ be a map of formal superschemes. Then $\f$ is OFP at $x\in|\bX|$ if and only if  $\mathbf{f}^{-1}(y)$ is algebraic at $x$, where $y=\mathbf{f}(x)$. In particular, $\mathbf{f}$ is LOFP if and only if $\mathbf{f}^{-1}(y)$ is locally algebraic for all $y\in|\bY|$. 
\end{proposition}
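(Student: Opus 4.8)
The plan is to follow the strategy of \cite[Proposition~3.2]{takeuchi1977formal}, reducing everything to a statement about irreducible super-coalgebras and then treating the two implications separately. By definition $\f$ is OFP at $x$ precisely when $\f:\bX_x\to\bY_y$ is LOFP, so I may replace $\bX,\bY$ by their irreducible components $\bX_x=\sp^*\O_x$ and $\bY_y=\sp^*\O_y$ and work with the induced super-coalgebra map $\phi:=\O_*(\f):\O_x\to\O_y$. Since $\{y\}=\sp^*\kappa(y)$, the relevant fiber $\bX_x\times_{\bY_y}\{y\}$ has coordinate super-coalgebra $\O_x\,\square_{\O_y}\kappa(y)$, and ``$\f^{-1}(y)$ algebraic at $x$'' translates into this super-coalgebra being of finite type. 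Because $\O_x$, $\O_y$ and $\kappa(y)$ are irreducible (resp.\ simple) with finite-dimensional coradicals, \Cref{prop.equiv.fint.type} lets me pass freely between ``finite type'' and ``locally finite''. Thus the problem becomes: $\phi$ admits an immersion into a locally algebraic $\bW\times\bY_y$ compatible with $\pi_2$ if and only if $\O_x\,\square_{\O_y}\kappa(y)$ is locally finite.

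For the ``only if'' direction, suppose $\f$ is OFP at $x$, witnessed by a locally algebraic $\bW$ and an immersion $i:\bX_x\to\bW\times\bY_y$ with $\pi_2\circ i=\f$. On super-coalgebras this is an injective map $\O_x\hookrightarrow\O_*(\bW)\otimes\O_y$ compatible with the projection to $\O_y$. Cotensoring with $\kappa(y)$ over $\O_y$, which is faithfully exact by \cite[Lemma~1.7]{zubkov2009affine}, yields an injection $\O_x\,\square_{\O_y}\kappa(y)\hookrightarrow\O_*(\bW)\otimes\kappa(y)$. Writing $\bW$ as a coproduct of its irreducible finite-type components $\bW_i$ and using that $\kappa(y)$ is finite-dimensional, \Cref{tensor.fin.type} shows each $\O_*(\bW_i)\otimes\kappa(y)$ is of finite type, so $\O_*(\bW)\otimes\kappa(y)$ is locally finite; as a sub-super-coalgebra of a locally finite super-coalgebra is again locally finite, $\O_x\,\square_{\O_y}\kappa(y)$ is locally finite, i.e.\ $\f^{-1}(y)$ is algebraic at $x$.

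The main obstacle is the converse. Here I assume $\O_x\,\square_{\O_y}\kappa(y)$ is of finite type and must \emph{construct} a locally algebraic $\bW$ together with an injective super-coalgebra map $u:\O_x\to\O_*(\bW)\otimes\O_y$ satisfying $(\epsilon\otimes\id)\circ u=\phi$. The device is that for \emph{any} super-coalgebra map $g:\O_x\to\A$ the composite $u=(g\otimes\phi)\circ\Delta:\O_x\to\A\otimes\O_y$ is automatically a super-coalgebra map with $(\epsilon\otimes\id)\circ u=\phi$, by the counit axiom. Using \Cref{lemma.emd.fn.tp} I embed the finite-type fiber super-coalgebra into a cofree super-coalgebra $\mathfrak{Cof}(\Vs)$ on a finite-dimensional $\Vs$, and then invoke the universal property of $\mathfrak{Cof}(\Vs)$ to produce a map $g:\O_x\to\mathfrak{Cof}(\Vs)$ realizing this embedding on the fiber; taking $\bW=\sp^*\mathfrak{Cof}(\Vs)$ (which is locally algebraic) gives a candidate $u$. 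Injectivity of $u$ is then checked by restricting along $\{y\}\hookrightarrow\bY_y$: cotensoring with $\kappa(y)$ sends $u$ to the chosen embedding, which is injective, and this is lifted to injectivity of $u$ on all of $\O_x$ using the coradical filtration of the irreducible super-coalgebra $\O_x$. This is exactly the construction of \cite[pp.~1495--1496]{takeuchi1977formal}; the only super-specific points to verify are that group-like elements are even, so that coradicals behave as in the even case, and that the finiteness tools (\Cref{prop.equiv.fint.type}, \Cref{tensor.fin.type}, \Cref{lemma.emd.fn.tp}) carry over to the super setting, which they do.

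Finally, the ``in particular'' statement follows by combining the equivalence just proved with the earlier fact that a map is LOFP if and only if it is OFP at every point. Since the points of $\f^{-1}(y)$ correspond to the points $x\in|\bX|$ lying over $y$, and ``$\f^{-1}(y)$ algebraic at $x$'' is the condition appearing in the first part, requiring $\f^{-1}(y)$ to be locally algebraic for every $y\in|\bY|$ is the same as requiring $\f$ to be OFP at every $x\in|\bX|$, i.e.\ LOFP.
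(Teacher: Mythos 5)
Your proposal is correct and follows essentially the same route as the paper: the paper's proof simply says to use \Cref{lemma.emd.fn.tp} and \Cref{inm.punt} and proceed as in \cite[Proposition~3.2]{takeuchi1977formal}, which is precisely the reduction to irreducible components, the cotensor argument for the forward direction, and the cofree-super-coalgebra construction of $\bW$ for the converse that you spell out. Your write-up is in fact more detailed than the paper's, and the super-specific checkpoints you flag (evenness of group-likes, \Cref{prop.equiv.fint.type}, \Cref{tensor.fin.type}) are the right ones.
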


\begin{proof} Use \Cref{lemma.emd.fn.tp} and \Cref{inm.punt} to proceed as in \cite[Proposition 3.2]{takeuchi1977formal}.    
\end{proof}

\begin{corollary}
    Let $\f:\bX\to\bY$ be a map of formal superschemes. The following conditions are equivalent. 

    \begin{enumerate}
        \item[\rm i)] $\f$ is LOFP. 
        \item[\rm ii)] For each locally algebraic subsuperscheme $\bW$ of $\bY$, $\f^{-1}
        (\bW)$ is locally algebraic. 
        \item[\rm iii)] There is a locally algebraic subsuperscheme $\bW$ of $\bY$, where $|\bW|=|\bY|$ and $\f^{-1}(\bW)$ is locally algebraic.  
    \end{enumerate}
\end{corollary}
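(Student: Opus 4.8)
The plan is to prove the cycle i) $\Rightarrow$ ii) $\Rightarrow$ iii) $\Rightarrow$ i), following the strategy of \cite{takeuchi1977formal}. Throughout I would repeatedly use one basic fact: a subsuperscheme of a locally algebraic formal superscheme is again locally algebraic. Indeed, subsuperschemes of $\bX$ correspond bijectively to sub-super-coalgebras of $\O_*(\bX)$, and a sub-super-coalgebra of a locally finite super-coalgebra is locally finite; combining this with the product stability of \Cref{prop.prod.of.loca.alg} reduces each implication to manipulating fiber products.

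\textbf{i) $\Rightarrow$ ii).} Let $\bW\subseteq\bY$ be a locally algebraic subsuperscheme and recall $\f^{-1}(\bW)=\bW\times_\bY\bX$. By \Cref{Prop.equiv.LOPF.and.OFP} i) the projection $\pi_\bW:\f^{-1}(\bW)\to\bW$ is LOFP, so there is a locally algebraic $\Ub$ and an immersion $i:\f^{-1}(\bW)\to\Ub\times\bW$ with $\pi_\bW=\pi_2\circ i$. Since $\Ub\times\bW$ is locally algebraic by \Cref{prop.prod.of.loca.alg}, and $i$ realizes $\f^{-1}(\bW)$ as a subsuperscheme of $\Ub\times\bW$, it follows that $\f^{-1}(\bW)$ is locally algebraic.

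\textbf{ii) $\Rightarrow$ iii).} Here I would exhibit an explicit locally algebraic subsuperscheme of $\bY$ of full support, namely the one attached to the coradical, $\bW:=\sp^*(\cod\,\O_*(\bY))$. Writing $\O_*(\bY)=\bigoplus_i\A_i$ as the sum of its irreducible components, one has $\cod\,\O_*(\bY)=\bigoplus_i\kappa(y_i)$, where each $\kappa(y_i)$ is the (unique) simple sub-super-coalgebra of $\A_i$ and is therefore finite-dimensional, as in the proof of \Cref{super.tak.a.2.2}. Hence the local super-coalgebra of $\bW$ at each of its points is finite-dimensional, so $\bW$ is algebraic at every point, while $|\bW|=|\bY|$ since $\bW$ and $\bY$ have the same irreducible components. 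Applying the hypothesis ii) to this $\bW$ yields iii).

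\textbf{iii) $\Rightarrow$ i).} By \Cref{P.5.7} it suffices to prove that $\f^{-1}(y)$ is algebraic at every point lying over $y$, for each $y\in|\bY|$. Since $|\bW|=|\bY|$, the residue subsuperscheme $\{y\}=\sp^*(\kappa(y))$ is contained in $\bW$: the irreducible component $\bW_y$ is a nonzero irreducible sub-super-coalgebra of $\O_y$, hence contains the unique simple sub-super-coalgebra $\kappa(y)$. Associativity of fiber products then gives
\[
\f^{-1}(y)=\{y\}\times_\bY\bX=\{y\}\times_\bW\bigl(\bW\times_\bY\bX\bigr)=\{y\}\times_\bW\f^{-1}(\bW),
\]
which exhibits $\f^{-1}(y)$ as the base change of the immersion $\{y\}\hookrightarrow\bW$, hence as a subsuperscheme of the locally algebraic $\f^{-1}(\bW)$. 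Therefore $\f^{-1}(y)$ is locally algebraic, in particular algebraic at each point over $y$, and $\f$ is LOFP. The main obstacle is the bookkeeping underlying the inclusion $\{y\}\subseteq\bW$ and the fiber-product transitivity; once these are in place, every implication collapses to the stability statements already established.
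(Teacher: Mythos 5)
Your proof is correct and follows essentially the same route as the paper: the paper's (two-line) argument likewise reduces everything to \Cref{P.5.7} and takes $\bW=\coprod_{y\in|\bY|}\{y\}$, which is exactly your coradical subsuperscheme $\sp^*(\cod\,\O_*(\bY))$. You have merely filled in the details the paper leaves implicit (base-change stability of LOFP, stability of local algebraicity under immersions and products, and the containment $\{y\}\subseteq\bW$), all of which check out.
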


\begin{proof}
    The equivalence i) $\Leftrightarrow$ ii) is direct from \Cref{P.5.7}. For the equivalence with iii), it suffices to take $\bW=\coprod\limits_{y\in|\bY|}\{y\}.$
\end{proof}

\begin{proposition}
    Let $\mathbf{f}:\bX\to\bY$ be a map of formal superschemes, and $z\in|\bX\otimes F|$ lying over $x\in|\bX|$. Then $\mathbf{f}$ is OFP at $x$ if and only if $\mathbf{f}\otimes F$ is at $z$.
\end{proposition}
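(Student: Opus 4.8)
The plan is to mirror the strategy of \cite[Proposition 3.3]{takeuchi1977formal}, reducing the assertion to the base-change invariance of algebraicity recorded in \Cref{T.3.1} by means of the fibral criterion of \Cref{P.5.7}. Write $y=\f(x)$ and $w=(\f\otimes F)(z)$. Since the formation of the underlying space commutes with the projections of the base-change square and $z$ lies over $x$, the point $w$ lies over $y$ under the canonical map $|\bY\otimes F|\to|\bY|$. By \Cref{P.5.7}, $\f$ is OFP at $x$ if and only if the fiber $\f^{-1}(y)$ is algebraic at $x$, and $\f\otimes F$ is OFP at $z$ if and only if $(\f\otimes F)^{-1}(w)$ is algebraic at $z$. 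Hence it suffices to compare algebraicity of these two fibers at the compatible points $x$ and $z$.

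Next I would use that scalar extension commutes with fiber products, so that
\[
\f^{-1}(y)\otimes F=(\bX\otimes F)\times_{\bY\otimes F}\bigl(\{y\}\otimes F\bigr),\qquad (\f\otimes F)^{-1}(w)=(\bX\otimes F)\times_{\bY\otimes F}\{w\}.
\]
Because $w$ lies over $y$, the coradical $\kappa(w)$ embeds as a sub-super-coalgebra of the irreducible component of $\{y\}\otimes F=\sp^*(\kappa(y)\otimes F)$ through $w$, which by \Cref{3.16} yields a closed immersion $(\f\otimes F)^{-1}(w)\hookrightarrow \f^{-1}(y)\otimes F$ passing through $z$. Applying \Cref{T.3.1} to the fiber $\f^{-1}(y)$ shows that $\f^{-1}(y)$ is algebraic at $x$ if and only if $\f^{-1}(y)\otimes F$ is algebraic at $z$. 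Thus the only remaining point is to transfer algebraicity at $z$ between $\f^{-1}(y)\otimes F$ and its closed sub-superscheme $(\f\otimes F)^{-1}(w)$.

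This last transfer is where I expect the main difficulty to lie. The subtlety is that the one-point base $\{y\}$, whose coordinate coalgebra has dual a finite field extension $L$ of $\K$ (as in the proof of \Cref{super.tak.a.2.2}), may split and thicken under $\otimes F$: the local factor of $L\otimes_\K F$ at $w$ is in general a finite-dimensional local $F$-algebra rather than just its residue field. Consequently $(\f\otimes F)^{-1}(w)$ and the piece of $\f^{-1}(y)\otimes F$ lying over $w$ differ only by this finite-dimensional thickening of the base point. One direction is immediate on the coalgebra side: a sub-super-coalgebra of a locally finite super-coalgebra is again locally finite (\Cref{Sub.sec.2.5}), so algebraicity of $\f^{-1}(y)\otimes F$ at $z$ descends to the closed sub-superscheme. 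For the converse I would argue that cotensoring the finite-type coalgebra $\O_z$ of $\bigl((\f\otimes F)^{-1}(w)\bigr)_z$ against the finite-dimensional thickening alters it only by a finite-dimensional factor, so that being of finite type---equivalently, $\O_z^{*}$ Noetherian via \Cref{prop.equiv.fint.type}---is preserved. Chaining the four equivalences then gives that $\f$ is OFP at $x$ if and only if $\f\otimes F$ is OFP at $z$.
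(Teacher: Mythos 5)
Your proof is correct and takes essentially the same route as the paper, which simply defers to Takeuchi's Proposition 3.3: reduce via the fibral criterion of \Cref{P.5.7} and the base-change invariance of algebraicity in \Cref{T.3.1}, then compare $(\f\otimes F)^{-1}(w)$ with $\f^{-1}(y)\otimes F$ at the point $z$. The only step you state loosely is the final transfer from $((\f\otimes F)^{-1}(w))_z$ back up to $(\f^{-1}(y)\otimes F)_z$; it is cleanest to dualize and observe that the profinite superalgebra of the latter is an extension of the Noetherian superalgebra of the former by a nilpotent superideal generated by the finite-dimensional image of $\m/\a$, where $\a=\ker\bigl(\kappa(y)^*\otimes F\to C^*\bigr)$ and $C^*$ is the local factor at $w$, so Noetherianity --- hence finite type, via \Cref{prop.equiv.fint.type} --- is preserved.
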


\begin{proof}
    The same as in \cite[Proposition 3.3]{takeuchi1977formal}.
\end{proof} 

\begin{definition}
    Let $V$ be a $\bX$-supermodule, where $\bX$ is a formal superscheme. We say that $V$ is \textit{finitely cogenerated} if it is isomorphic to some $\bX$-subsupermodule of $\O_*(\bX)^n$ for some integer $n$.
\end{definition}

\begin{definition}
    A map $\mathbf{f}:\bX\to\bY$ of formal superschemes is 

    \begin{enumerate}
        \item[i)] \textit{finite} if for each finite subsuperscheme $\bW$ of $\bY$, $\mathbf{f}^{-1}(y)$ is finite, 
        \item[ii)] \textit{finite bounded} if the $\bY$-supermodule $\mathbf{f}_*(\O_*(\bX))$ is finitely cogenerated.
    \end{enumerate}
\end{definition}

\begin{proposition}
    A map of formal superschemes $\mathbf{f}:\bX\to\bY$ is finite if and only if $\mathbf{f}^{-1}(y)$ are finite subsuperschemes of $\bX$ for all $y\in|\bY|$. In particular, finite bounded maps are finite. 
\end{proposition}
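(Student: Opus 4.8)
The plan is to transport the proof of the corresponding statement of \cite{takeuchi1977formal} to the super setting, working throughout with the dictionary $\f^{-1}(\bW)=\bX\times_\bY\bW$ and $\O_*(\bX\times_\bY\bW)=\O_*(\bX)\square_{\O_*(\bY)}\O_*(\bW)$, together with the basic fact that a formal superscheme is finite if and only if its associated super-coalgebra is finite-dimensional (cf.\ \Cref{the.1.2.intr}~ii)). The ``only if'' implication is immediate: for $y\in|\bY|$ the point $\{y\}=\sp^*(\kappa(y))$ is itself a finite subsuperscheme of $\bY$, since $\kappa(y)$ is a simple super-coalgebra and hence finite-dimensional; thus if $\f$ is finite, then $\f^{-1}(y)=\f^{-1}(\{y\})$ is finite.

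For the converse, assume every fibre $\f^{-1}(y)$ is finite and let $\bW$ be an arbitrary finite subsuperscheme of $\bY$. As $\O_*(\bW)$ is finite-dimensional, $|\bW|$ is a finite set and $\bW=\coprod_{y\in|\bW|}\bW_y$ with each irreducible component $\bW_y$ finite; since cotensor commutes with direct sums, $\f^{-1}(\bW)=\coprod_{y}\f^{-1}(\bW_y)$, so it suffices to treat the case where $\bW=\bW_y$ is local with coradical $\kappa(y)$. I would then induct along the coradical filtration $\{\A_n\}$ of $\A:=\O_*(\bW_y)$, which stabilizes ($\A_N=\A$) because $\A$ is finite-dimensional. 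Applying the left-exact functor $\O_*(\bX)\square_{\O_*(\bY)}(-)$ (a kernel, hence left exact) to the short exact sequences $0\to\A_{n-1}\to\A_n\to\A_n/\A_{n-1}\to0$ of left $\O_*(\bY)$-comodules reduces the problem to bounding $\O_*(\bX)\square_{\O_*(\bY)}\A_0$ and each $\O_*(\bX)\square_{\O_*(\bY)}(\A_n/\A_{n-1})$.

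The base term is $\O_*(\bX)\square_{\O_*(\bY)}\kappa(y)=\O_*(\f^{-1}(y))$, finite-dimensional by hypothesis. The crux, and the step I expect to require the most care, is the graded pieces: because the coradical filtration is a coalgebra filtration, for $c\in\A_n$ only the $\A_0$-component of $\Delta(c)$ survives modulo $\A_{n-1}$ on the right factor, so the left $\O_*(\bY)$-coaction on $\A_n/\A_{n-1}$ factors through $\kappa(y)$; as $\kappa(y)$ is cosemisimple, this finite-dimensional $\kappa(y)$-comodule embeds into a finite sum $\kappa(y)^{\oplus m}$. Left-exactness then gives an embedding $\O_*(\bX)\square_{\O_*(\bY)}(\A_n/\A_{n-1})\hookrightarrow\O_*(\f^{-1}(y))^{\oplus m}$, which is finite-dimensional; the induction yields $\O_*(\f^{-1}(\bW_y))=\O_*(\bX)\square_{\O_*(\bY)}\A$ finite-dimensional, i.e.\ $\f^{-1}(\bW_y)$ finite. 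Finally, for the last assertion, if $\f$ is finite bounded then $\f_*(\O_*(\bX))$ embeds as a $\bY$-supermodule into $\O_*(\bY)^n$; applying the left-exact functor $-\square_{\O_*(\bY)}\kappa(y)$ and using $\O_*(\bY)\square_{\O_*(\bY)}\kappa(y)=\kappa(y)$ produces an embedding $\O_*(\f^{-1}(y))\hookrightarrow\kappa(y)^n$, so every fibre is finite and $\f$ is finite by the equivalence just established.
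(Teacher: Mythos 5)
Your argument is correct and realizes exactly what the paper intends: the paper's own proof is a one-line citation to \cite[Proposition 3.4]{takeuchi1977formal}, and your write-up is a faithful super-adaptation of that argument (reduction to a local finite subsuperscheme via irreducible components, induction along the coradical filtration with $\O_*(\f^{-1}(y))=\O_*(\bX)\square_{\O_*(\bY)}\kappa(y)$ as base case, and left-exactness of the cotensor for the finite-bounded assertion). The only difference is cosmetic: your inductive step controls the graded pieces $\A_n/\A_{n-1}$ as comodules over the cosemisimple $\kappa(y)$, where Takeuchi phrases the same bound through wedge identities for the cotensor; these are equivalent here since the super-coalgebras involved are super-cocommutative.
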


\begin{proof}
This follows in the same way as in \cite[Proposition 3.4]{takeuchi1977formal}.
\end{proof}

\subsection{Local superdimension} Let $\bX$ be a locally algebraic formal superscheme. Then, $\O_x$ is of finite type for each $x\in|\bX|$. By  \Cref{prop.equiv.fint.type} and \cite[I, \S3, Corollary 5.10]{demazure1980introduction}, we find that $\O_x^*$ is Noetherian with $ \kdim(\O_x^*)\ev<\infty$. Furthermore, 

\[
\kdim\,\O_*(\bX)\ev=\sup_{x\in|\bX|}\kdim\,(\O_x)\ev
\]
Below we assume that this number is finite. 

\begin{definition}
    Let $\bX$ be a locally algebraic formal superscheme and $x\in|\bX|$. We put 

    \[
    \sdim_x\,\bX:=\ksdim\,\O_x\quad \text{ and }\quad\sdim\ \bX=\ksdim\,\O_*(\bX).
    \]
    We call these vectors the \textit{local superdimension} of $\bX$ at $x$ and the \textit{superdimension} of $\bX$, respectively.
\end{definition}

Below we use the notation $\sdim_{x, \overline{0}}\,\bX$ and $\sdim_{x, \overline{1}}\,\bX$ to denote the even and odd components of the local superdimension, respectively. For the superdimension of $\bX$, we keep the same convention. 

\begin{proposition}
    Let $\bX$ be a locally algebraic formal superscheme. Then for any $x\in|\bX|$ we have 
    \[\sdim_{x,\overline{0}}\,\bX+\sdim_{x,\overline{1}}\,\bX<\infty.
    \] 
\end{proposition}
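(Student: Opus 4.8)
The plan is to reduce the whole statement to the Krull superdimension of a single Noetherian superring, namely $\O_x^*$, and then invoke the finiteness facts already recorded in \Cref{sec.prel}. First I would unwind the definitions: by construction $\sdim_x\,\bX=\ksdim\,\O_x$, and since $\bX$ is locally algebraic it is in particular algebraic at $x$, so the super-coalgebra $\O_x$ is of finite type. By the convention adopted for super-coalgebras of finite type, $\ksdim\,\O_x$ is defined to be $\ksdim\,\O_x^*$. By \Cref{prop.equiv.fint.type} the dual $\O_x^*$ is then a Noetherian superring, and as observed just before the definition of local superdimension, $\kdim\,(\O_x^*)\ev<\infty$.

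Next I would treat the two graded components separately. For the even part the computation is immediate: by definition $\sdim_{x,\overline{0}}\,\bX=\mathrm{Ksdim}\ev(\O_x^*)=\kdim\,(\O_x^*)\ev$, which is finite by the remark just recalled. For the odd part I would appeal to the general finiteness of the odd Krull superdimension discussed in \Cref{sec.prel}: for any Noetherian superring $A$ with $\kdim(A\ev)<\infty$, the $A\ev$-module $A\od$ is finitely generated (\cite[Lemma~1.4]{masuoka2020notion}), and a system of odd parameters can always be chosen among a fixed generating set of $A\od$ (\cite[Proposition~4.1]{masuoka2020notion}); hence $\mathrm{Ksdim}\od(A)$ is bounded by the number of such generators, and in particular is finite. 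Applying this with $A=\O_x^*$ bounds $\sdim_{x,\overline{1}}\,\bX=\mathrm{Ksdim}\od(\O_x^*)$.

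Finally I would simply add the two bounds: each of $\sdim_{x,\overline{0}}\,\bX$ and $\sdim_{x,\overline{1}}\,\bX$ is finite, so their sum is finite, which is exactly the assertion.

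The only thing that needs care — and it is hardly an obstacle — is verifying that the hypotheses of the two finiteness results from \Cref{sec.prel} genuinely hold for $\O_x^*$, i.e. that it is Noetherian with $\kdim(\O_x^*)\ev<\infty$. Both are already guaranteed by local algebraicity together with \Cref{prop.equiv.fint.type}, so once they are in place the proposition is essentially a restatement of the fact that a Noetherian superring of finite even Krull dimension has finite odd Krull superdimension, and no genuine difficulty remains.
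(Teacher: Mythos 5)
Your proposal is correct and follows essentially the same route as the paper: reduce to the local Noetherian superring $\O_x^*$, get finiteness of the even component from \cite[I, \S3, Corollary 5.10]{demazure1980introduction} (via the finiteness of $\kdim(\O_x^*)\ev$ recorded just before the definition of local superdimension), and get finiteness of the odd component from Noetherianity as in the preliminaries. Your version merely spells out the verification of the hypotheses a bit more explicitly; no substantive difference.
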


\begin{proof} Let $R=\O_x^*$. Then, $\sdim_x\,\bX=\ksdim\,R$. Note that the superring $R$ is local Noetherian, so $R\ev$ is also local Noetherian. Hence, by \cite[I, \S 3,  Corollary 5.10]{demazure1980introduction} the even component of the superdimension is finite. The odd component is finite by Noetherianity. 
\end{proof}
 
To state our next theorem, we need to define regular superrings. Let $(R, \m)$ be a local Noetherian superring. If $\ksdim\,R=\sdim_{R/\m}\,\m/\m^2$, we say that $R$ is \textit{regular}. For a non-local $R$, we say that it is \textit{regular} if all the $R_\m$ are regular, with $\m$ varying on the maximal ideals of $R$. Refer to \cite[\S5]{masuoka2020notion} for a wider theory of regular superrings. 

\begin{theorem}\label{ine.sdim}
    Let $\mathbf{f}:\bX\to\bY$ be a map of locally algebraic formal superschemes, $x\in|\bX|$ and $y=\mathbf{f}(x)$. Then 

    \[
    \sdim_{x, \overline{0}}\,\bX\leq\sdim_{y, \overline{0}}\,\bY+\sdim_{x, \overline{0}}\,\mathbf{f}^{-1}(y).
    \]
    If $\mathbf{f}$ is flat at $x$, then the equality holds. If $\O_y^*$ is also regular and both $\O_x^*$ and $\O_y^*$ contain a field, then 

    \begin{equation}\label{clubsuit}
        \sdim_{x, \overline{1}}\,\bX\geq\sdim_{y, \overline{1}}\,\bY+\sdim_{x, \overline{1}}\,\mathbf{f}^{-1}(y).
    \end{equation}
\end{theorem}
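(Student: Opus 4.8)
The plan is to pass through the duality between super-coalgebras and (pro-finite) superrings and reduce the statement to an inequality of Krull superdimensions. Restricting $\mathbf{f}$ to the irreducible components through $x$ and $y$ and applying the quasi-inverse $\O_*$ of $\sp^*$ gives a map of super-coalgebras $\O_x\to\O_y$; dualizing produces a local homomorphism $\phi\colon S\to R$ of local Noetherian superrings with $R=\O_x^*$ and $S=\O_y^*$, both Noetherian with finite even Krull dimension by \Cref{prop.equiv.fint.type}. Writing $\m_0$ for the maximal ideal of $S\ev$ and $\kappa=S\ev/\m_0$ for the residue field, the identity $\{y\}=\sp^*(\kappa(y))$ together with the definition of the fibre product gives $\O_*(\mathbf{f}^{-1}(y))^*\simeq R\otimes_S\kappa=R/\m_S R$. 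Since $\sdim_{x}\bX=\ksdim R$, $\sdim_{y}\bY=\ksdim S$ and $\sdim_{x}\mathbf{f}^{-1}(y)=\ksdim(R/\m_S R)$, it suffices to prove the even and odd inequalities for $\phi$.

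For the even component I would first compute the even part of the fibre. From $\m_S R=\m_0R+\phi(S\od)R$ the even part of $\m_S R$ equals $\m_0R\ev+\phi(S\od)R\od$, and the correction term $\phi(S\od)R\od\subseteq R\od^2\subseteq\J_R$ lies in the nilpotent ideal $\J_R$ (nilpotent because $R\od$ is finitely generated over $R\ev$ by square-zero odd elements). As Krull dimension is insensitive to nilpotents, $\sdim_{x,\overline{0}}\mathbf{f}^{-1}(y)=\kdim(R\ev/\m_0R\ev)=\kdim(R\ev\otimes_{S\ev}\kappa)$. The required inequality $\kdim R\ev\le\kdim S\ev+\kdim(R\ev\otimes_{S\ev}\kappa)$ is then exactly the classical fibre-dimension theorem for the local homomorphism $S\ev\to R\ev$ of Noetherian local rings (see, e.g., \cite[I, \S3]{demazure1980introduction}). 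When $\mathbf{f}$ is flat at $x$, the component $\bY_y$ is irreducible, so \Cref{super.tak.a.2.2} makes $\O_x$ a free $\O_y$-comodule; this freeness descends to even parts, giving $R\ev$ flat over $S\ev$, and the classical theorem upgrades the bound to an equality.

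For the odd component I would work under the added hypotheses (flat at $x$, $S$ regular, and $R,S$ equicharacteristic) and exhibit a system of odd parameters of $R$ of length $\ksdim_{\overline{1}}S+\ksdim_{\overline{1}}(R/\m_S R)$, forcing $\ksdim_{\overline{1}}R$ to be at least this sum. Since $S$ is regular and contains a field, the super Cohen structure theorem (cf. \cite[\S5]{masuoka2020notion}) gives $\widehat{S}\simeq\kappa[[u_1,\dots,u_n]]\otimes\Lambda(\eta_1,\dots,\eta_p)$ with $p=\ksdim_{\overline{1}}S$, so the odd generators $\eta_1,\dots,\eta_p$ form a maximal odd system for $S$. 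Choosing a system of odd parameters $\bar{\xi}_1,\dots,\bar{\xi}_q$ for the fibre $R/\m_S R$ and lifting it to $\xi_1,\dots,\xi_q\in R\od$, all taken among generators of $R\od$ as permitted by \cite[Proposition~4.1]{masuoka2020notion}, the claim is that $\phi(\eta_1),\dots,\phi(\eta_p),\xi_1,\dots,\xi_q$ is a system of odd parameters of $R$. Verifying it reduces to checking that the annihilator in $R\ev$ of the product of all these elements does not lower $\kdim R\ev$; here flatness of $R$ over $S$, which yields both $\kdim R\ev=\kdim S\ev+\kdim((R/\m_S R)\ev)$ and compatibility of annihilators under base change, lets one splice the defining conditions for the $\eta_i$ over $S$ and for the $\bar{\xi}_j$ in the fibre. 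This is the superring content of \cite[Theorem~7.2]{zubkov2022dimension}.

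The hard part is this last verification. Unlike the even case, where subadditivity of fibre dimension is automatic and controls the dimension from above, the odd case only yields a lower bound by producing a sufficiently long odd system, with no mechanism forcing that system to be maximal; this is exactly why regularity of $\O_y^*$ and the equicharacteristic hypothesis are needed — to split off the base odd parameters cleanly via the structure theorem and to control their annihilators through flatness — and also why equality in the odd component cannot be concluded in general, in accordance with the remark following the statement.
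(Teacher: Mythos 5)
Your overall strategy coincides with the paper's: dualize to a local homomorphism of local Noetherian superalgebras, identify the fibre with $R\otimes_S\kappa=R/\m_S R$, derive the even inequality from the classical fibre-dimension theorem for $S\ev\to R\ev$ (the paper cites \cite[I, \S3, Corollary 5.12]{demazure1980introduction}), and derive the odd inequality from \cite[Theorem 7.2]{zubkov2022dimension}. Your explicit handling of the nilpotent correction term $\phi(S\od)R\od\subseteq R\od^2\subseteq\mathfrak{J}_R$, showing that $\sdim_{x,\overline{0}}\mathbf{f}^{-1}(y)=\kdim(R\ev\otimes_{S\ev}\kappa)$, is a detail the paper leaves implicit and is correct.

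There is, however, a genuine gap in your argument for the equality in the flat case. You claim that freeness of $\O_x$ as an $\O_y$-super-comodule (\Cref{super.tak.a.2.2}) ``descends to even parts, giving $R\ev$ flat over $S\ev$.'' This implication is false in general: if $\M\simeq\Ws\otimes\A$ then $\M\ev\simeq(\Ws\ev\otimes\A\ev)\oplus(\Ws\od\otimes\A\od)$, and dually a free $S$-supermodule $R\simeq S^{\oplus I}\oplus(\Pi S)^{\oplus J}$ has $R\ev\simeq(S\ev)^{\oplus I}\oplus(S\od)^{\oplus J}$, which is flat over $S\ev$ only when $S\od$ is. For instance, take $S$ with $S\ev=\K[t]_{(t)}$ and $S\od=(S\ev/(t))\,\theta$ (so $t\theta=0$, $\theta^2=0$), and $R=S[\eta]$ free of rank $1\mid 1$ over $S$: then $R\ev=S\ev\oplus S\od\,\eta$ is a torsion extension of $S\ev$ and is not flat over it, although $R$ is free over $S$. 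The desired equality $\kdim R\ev=\kdim S\ev+\kdim(R\ev\otimes_{S\ev}\kappa)$ is still true, but it has to be extracted from flatness of the superring map $S\to R$ itself (e.g.\ via going-down on prime superideals, which are controlled by $R\ev$ modulo nilpotents), not from flatness of $R\ev$ over $S\ev$; this is exactly what the paper's citation of \cite[Corollary 1.7.9]{takeuchi1974tangent}, or the even part of \cite[Theorem 7.2]{zubkov2022dimension}, supplies. Separately, your sketch of the odd inequality is essentially a re-derivation of \cite[Theorem 7.2]{zubkov2022dimension}, which the paper simply invokes; citing it suffices, but if you intend to reprove it you still owe a justification of the ``compatibility of annihilators under base change'' step, which is the technical heart of that result and is not supplied by your splicing argument as written.
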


\begin{proof}
      Observe that the superalgebras $R=\O_y^*$ and $S=\O_x^*$ are local Noetherian and $\f$ induces a local morphism $\phi:R\to S$. The maximal superideal of $R$ is $\kappa(y)^\perp$. The superideal $S\m$ of $S$ is $D^\perp$, with $D=\O_*(\f^{-1}(y)_x)$. Hence $\ksdim\,D=\sdim
      _x\,\f^{-1}(y)$ and therefore \cite[I, \S\,3, Corollary 5.12]{demazure1980introduction} implies that 
      
      \[\sdim_{x, \overline{0}}\,\bX=\ksdim\ev\,S\leq\ksdim\ev\,R+\ksdim\ev(S/S\m)=\sdim_{y, \overline{0}}\,\bY+\sdim_{x, \overline{0}}\,\f^{-1}(y).\]

      Now, $R^\circ=\O_y$ and $S^\circ=\O_x$ so, if $\f$ is flat, then the equality occurs above by \cite[Corollary 1.7.9]{takeuchi1974tangent}. Suppose further that $R$ is regular and both $R$ and $S$ contain a field. Thus, the morphism $\phi:R\to S$ is flat and by \cite[Theorem 7.2]{zubkov2022dimension}, we obtain that 
      \[
    \sdim_{x, \overline{1}}\,\bX\geq\sdim_{y, \overline{1}}\,\bY+\sdim_{x, \overline{1}}\,\mathbf{f}^{-1}(y).
    \]      
\end{proof}

\begin{question}
    When does the equality occurs in \eqref{clubsuit}?
\end{question} 

\begin{theorem} Let $\K$ be algebraically closed. Consider $\bX$ and $\bY$ be formal superschemes algebraic at  $x$ and $y$, respectively. Suppose that $z\in\bX\times\bY$ lies over $(x, y)$, $\O_y^*$ is regular and both $\O_x^*$ and $\O_y^*$ contain a field. Then 

    \[
     \sdim_{z, \overline{0}}\,(\bX\times\bY)=\sdim_{x, \overline{0}}\,\bX+\sdim_{z, \overline{0}}\,\bY \quad \text{and}\quad 
     \sdim_{z, \overline{1}}\,(\bX\times\bY)\geq\sdim_{x, \overline{1}}\,\bX+\sdim_{z, \overline{1}}\,\bY.
    \] 
\end{theorem}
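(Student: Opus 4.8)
The plan is to deduce both statements from the fibre-dimension theorem (\Cref{ine.sdim}) applied to the second projection $\pi_2\colon\bX\times\bY\to\bY$ at the point $z$. Since $z$ lies over $(x,y)$ we have $\pi_2(z)=y$, so \Cref{ine.sdim} will relate $\sdim_{z}(\bX\times\bY)$ to $\sdim_{y}\bY$ and to $\sdim_{z}\pi_2^{-1}(y)$. The argument then reduces to two points: identifying the fibre $\pi_2^{-1}(y)$ with $\bX$, and checking that $\pi_2$ is flat at $z$.

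First I would compute the fibre. By the definition of the product, $\O_*(\bX\times\bY)=\O_*(\bX)\otimes\O_*(\bY)$, while $\{y\}=\sp^*(\kappa(y))$; using that cotensoring a free comodule leaves the other factor untouched, one finds
\[
\O_*(\pi_2^{-1}(y))=(\O_*(\bX)\otimes\O_*(\bY))\,\square_{\O_*(\bY)}\,\kappa(y)\cong\O_*(\bX)\otimes\kappa(y).
\]
Here is where algebraic closedness enters decisively: $\kappa(y)$ is the coradical of the irreducible super-coalgebra $\O_y$, hence a simple super-cocommutative super-coalgebra, so its dual is a finite-dimensional simple commutative $\K$-algebra, namely $\K$ itself; thus $\kappa(y)\cong\K$. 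Consequently $\pi_2^{-1}(y)\cong\bX$ as formal superschemes, and under this identification the component of $z$ corresponds to the component of $x$, giving $\sdim_{z,\overline{0}}\pi_2^{-1}(y)=\sdim_{x,\overline{0}}\bX$ and $\sdim_{z,\overline{1}}\pi_2^{-1}(y)=\sdim_{x,\overline{1}}\bX$.

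Next I would establish flatness. The functor $\pi_2^*$ is $-\,\square_{\O_*(\bY)}(\O_*(\bX)\otimes\O_*(\bY))$, and $\O_*(\bX)\otimes\O_*(\bY)$, viewed as a right $\O_*(\bY)$-super-comodule through its second tensor factor, is free of the form $\Ws\otimes\O_*(\bY)$ with $\Ws=\O_*(\bX)$; since free super-comodules are flat (\Cref{super.tak.a.2.2}), the projection $\pi_2$ is flat, in particular at $z$. Feeding this into the even part of \Cref{ine.sdim} promotes the inequality to an equality,
\[
\sdim_{z,\overline{0}}(\bX\times\bY)=\sdim_{y,\overline{0}}\bY+\sdim_{z,\overline{0}}\pi_2^{-1}(y)=\sdim_{x,\overline{0}}\bX+\sdim_{y,\overline{0}}\bY,
\]
which is the first assertion.

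For the odd inequality I would invoke the second half of \Cref{ine.sdim}, now with source point $z$ and target point $y$. Its hypotheses hold: $\pi_2$ is flat at $z$ as above, $\O_y^*$ is regular by assumption, and both $\O_z^*$ and $\O_y^*$ contain a field because they are $\K$-superalgebras, whose even parts contain $\K$. Therefore
\[
\sdim_{z,\overline{1}}(\bX\times\bY)\geq\sdim_{y,\overline{1}}\bY+\sdim_{z,\overline{1}}\pi_2^{-1}(y)=\sdim_{x,\overline{1}}\bX+\sdim_{y,\overline{1}}\bY.
\]
The main obstacle I anticipate is the fibre computation: pinning down $\O_*(\pi_2^{-1}(y))$ through the cotensor product and, crucially, using algebraic closedness to collapse $\kappa(y)$ to $\K$ so that the fibre is genuinely $\bX$ and the component of $z$ matches that of $x$. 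Once this identification together with the freeness (hence flatness) of the projection is in place, both claims follow directly from \Cref{ine.sdim}.
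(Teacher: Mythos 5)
Your proof is correct and is essentially the paper's argument: the paper applies \Cref{ine.sdim} to the flat projection $\pi_1\colon\bX\times\bY\to\bX$ and identifies $\pi_1^{-1}(x)\simeq\bY$, whereas you use $\pi_2$ and identify $\pi_2^{-1}(y)\simeq\bX$, additionally spelling out the fibre computation (via the cotensor product and the collapse $\kappa(y)\cong\K$ over an algebraically closed field) and the flatness of the projection, both of which the paper leaves implicit. Your symmetric choice of $\pi_2$ is in fact the one that matches the stated hypotheses, since the odd inequality of \Cref{ine.sdim} requires regularity of the dual local ring at the \emph{target} point --- that is $\O_y^*$ when the map is $\pi_2$, but would be $\O_x^*$ for the paper's $\pi_1$.
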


\begin{proof}
    Under the assumptions, $\pi_1:\bX\times\bY\to\bX$ is flat. Thus, by \Cref{ine.sdim} we have  

    \[
    \sdim_{z, \overline{0}}\,(\bX\times\bY)=\sdim_{x, \overline{0}}\,\bX+\sdim_{z, \overline{0}}\,\pi_1^{-1}(x) \quad \text{and}\quad 
     \sdim_{z, \overline{1}}\,(\bX\times\bY)\geq\sdim_{x, \overline{1}}\,\bX+\sdim_{z, \overline{1}}\,\pi_1^{-1}(x).
    \]
    The proof then follows from the isomorphism $\pi_1^{-1}(x)=\{x\}\times\bY\simeq\bY$.
\end{proof}

\begin{example} Consider the \textit{parity shift functor} $$\Pi:\mathsf{SVec}_\K\to\mathsf{SVec}_\K, V\mapsto \Pi\,V,$$ where, for each $i\in\Z_2$, $(\Pi V)_i=V_{i+\overline{1}}$. On morphisms, $\Pi\phi=\phi$, for any morphism $\phi:V\to W$ between vector superspaces. If $v\in V^h$, then $v\in(\Pi V)^h$ with opposite parity, so we denote it for $\Pi v$. Now consider $R$ to be as in \cite[Example 7.6]{zubkov2022dimension} and $y=\Pi1$. Thus, $S=\K[y]$ is a $\K$-superalgebra with $S\ev=\K$ and $\mathfrak{J}_S=S\od=\K y$. It is proved in \cite[Example 7.6]{zubkov2022dimension} that the embedding $\K[y]\to R$ is flat and 

\[
\ksdim\od\,R>\ksdim\od\,\K[y]+\ksdim\od\,R/R\mathfrak{M},
\]
where $\mathfrak{M}=\K y$ is the maximal ideal of $S$. As in the proof of \Cref{ine.sdim}, wee see that for the formal superschemes $\bX=\spf\,\K[y]$ and $\spf\,R$, the map $\mathbf{i}:\bX\to\bY$ induced by $i$ is flat, and for each $x\in|\bX|$, we have 

\[
\sdim_{x, \overline{0}}\,\bX=\sdim_{y, \overline{0}}\,\bY+\sdim_{x, \overline{0}}\,\mathbf{f}^{-1}(y)\text{ and }\sdim_{x, \overline{1}}\,\bX>\sdim_{y, \overline{1}}\,\bY+\sdim_{x, \overline{1}}\,\mathbf{f}^{-1}(y). 
\]
\qed 
\end{example}

In the following example, we use the notation and terminology from \cite{HOSHI202028} and \cite{takahashi2024quotients}.

\begin{example} 
Let $\mathbf{H}$ be an associative $\K$-superalgebra that is also a supercoalgebra. Suppose that the coproduct $\Delta: \mathbf{H} \to \mathbf{H} \otimes \mathbf{H}$ and the counit $\epsilon: \mathbf{H} \to \K$ are morphisms of superalgebras, that is,
\[
\Delta(xy) = \Delta(x)\Delta(y), \qquad \epsilon(xy) = \epsilon(x)\epsilon(y).
\]
If, in addition, $\mathbf{H}$ is endowed with an even linear map $S: \mathbf{H} \to \mathbf{H}$ (called the \emph{antipode}) such that for all $x \in \mathbf{H}$, writing $\Delta(x) = \sum x_{(1)} \otimes x_{(2)}$, we have
\[
\epsilon(x) = S(x_{(1)}) x_{(2)} = x_{(1)} S(x_{(2)}),
\]
then $\mathbf{H}$ is called a \emph{Hopf superalgebra} (see \cite{westra2009superrings} for further details).

Now, let $\mathbf{J}$ be a Hopf superalgebra and $\mathbf{I}$ a Hopf subsuperalgebra of $\mathbf{J}$. Denote by $J:=\Delta_\mathbf{J}^{-1}(\mathbf{J}\ev\otimes\mathbf{J}\ev)$ (resp.~$I:=\Delta_\mathbf{I}^{-1}(\mathbf{I}\ev\otimes\mathbf{I}\ev)$) the largest purely even Hopf subsuperalgebra of $\mathbf{J}$ (resp.~$\mathbf{I}$). Let
\[
P(\mathbf{J}) = \{ v \in \mathbf{J} \mid \Delta(v) = 1 \otimes v + v \otimes 1 \}
\]
be the space of \textit{primitive elements} of $\mathbf{J}$, and let $\Vs _{\mathbf{J}}$ (resp.~$\Vs _{\mathbf{I}}$) be its odd component relative to $\mathbf{J}$ (resp.~$\mathbf{I}$). These are purely odd right (and left) supermodules over $J$ and $I$, respectively. We can regard $I$ as a Hopf subalgebra of $J$, and $\Vs _{\mathbf{I}}$ as an $I$-submodule of $\Vs _{\mathbf{J}}$. Thus, we may define the quotient
\[
\mathsf{Q} = \Vs _{\mathbf{J}} / \Vs _{\mathbf{I}}.
\]
Now define the quotients $\mathbf{J}/\hspace{-1.2mm}/\,\mathbf{I}$ and $J/\hspace{-1.2mm}/I$ as in \cite[\S3]{takahashi2024quotients}.

\begin{enumerate}
    \item[i)] Assume that $\dim_\K\Vs _\mathbf{J}=m<\infty$, and that $J = \mathfrak{Cof}(\Us)$, where $\Us\in\mathsf{SVect}_\K$ is purely even and $\dim_\K \Us = n < \infty$. (If $J$ is \textit{connected}, that is, if $\dim_\K(\cod\,J)=1$, the \textit{smoothness} of $J$ is a sufficient condition for $J=\mathfrak{Cof}(\Us)$. Under this assumption, it follows from \cite[Proposition~3.12]{HOSHI202028} that $J = \mathfrak{Cof}(P(J))$.) Then, \cite[Eq.~(2.11)]{takahashi2024quotients} implies that 
    \[
    \mathbf{J} \simeq J \otimes \wedge(\Vs _{\mathbf{J}}).
    \]
    Therefore, using \cite[Eq.~(3.7)]{HOSHI202028}, we obtain
    \[
    \mathbf{J}^* \simeq (J \otimes \wedge(\Vs _{\mathbf{J}}))^* \simeq \mathfrak{Cof}(J \oplus \Vs _{\mathbf{J}})^* \simeq \K\llbracket T_1, \ldots, T_{n+m}\rrbracket,
    \]
    where $T_1, \ldots, T_n$ are even variables and $T_{n+1}, \ldots, T_{n+m}$ are odd variables. Thus,
    \[
    \ksdim\,\sp^* \mathbf{J} = \kdim \,\mathrm{Sp}^* J \mid \dim_\K \Vs _{\mathbf{J}}.
    \]

    \item[ii)] Suppose that one of the conditions i)–iv) in \cite[Proposition~4.4]{takahashi2024quotients} is satisfied. Suppose further that $\mathbf{J} /\hspace{-1.2mm}/\, \mathbf{I}$ (or equivalently $J/\hspace{-1.2mm}/\, I$) is smooth \cite[Theorem~4.8]{takahashi2024quotients} and that $J/\hspace{-1.2mm}/I$ is connected. Then
    \[
    \mathbf{J} /\hspace{-1.2mm}/\, \mathbf{I}\simeq J\otimes_I\wedge(\mathsf{Q})\simeq J /\hspace{-1.2mm}/\, I\otimes\wedge(\mathsf{Q}).
    \]
    Therefore, we find that
    \begin{align*}
        \ksdim\,\sp^*(\mathbf{J} /\hspace{-1.2mm}/\, \mathbf{I})
        &=\kdim\,\mathrm{Sp}^*(J /\hspace{-1.2mm}/\, I)\mid\dim_\K\Vs _\mathbf{J}-\dim_\K\Vs _\mathbf{I}.
    \end{align*}
    In particular, if $I=\K$, then $ \mathbf{J} /\hspace{-1.2mm}/\, \mathbf{I}\simeq J\otimes\wedge(\mathsf{Q})$, so that 
    
    \[
    \ksdim\,\sp^*(\mathbf{J} /\hspace{-1.2mm}/\, \mathbf{I})=\kdim\,\mathrm{Sp}^*J\mid\dim_\K\Vs _\mathbf{J}-\dim_\K\Vs _\mathbf{I}.
    \]
\end{enumerate}
\end{example}

\section{Further research directions}\label{sec.forthcoming}

Building on the results of this work, we envision several possible directions for further exploration.

A first line of inquiry concerns the notion of \textit{Krull superdimension} for super-cocommutative super-coalgebras. It seems natural to seek an intrinsic definition, not relying on the dual superalgebra, and to identify conditions under which the equality $\ksdim\,\A = \ksdim\,\A^*$ holds for a finite type super-coalgebra $\A$.

Another direction involves the study of \textit{constant} and \textit{étale} (formal) superschemes. Developing a general framework for these objects could help clarify their categorical properties, their functorial behavior, and how they relate to their bosonic and even reductions.

Finally, it would be interesting to adapt the classical notions of \textit{non-ramified}, \textit{étale}, and \textit{smooth} morphisms to the setting of formal superschemes. Understanding how these concepts interact with the corresponding supermodules of differentials may shed light on the geometry of formal superspaces.

\section*{Acknowledgments}

The second-named author gratefully acknowledges the hospitality and partial financial support of the Instituto de Matemática Pura e Aplicada (IMPA, Brazil) during his visit in January–February 2025, when part of this project was carried out. 



\begin{thebibliography}{numero}

\bibitem{bourbaki1998commutative}
Bourbaki, N. (1998). \textit{Commutative algebra: Chapters 1–7} (Vol.~1). Springer Science \& Business Media.

\bibitem{demazure1980introduction}
Demazure, M., \& Gabriel, P. (1980). \textit{Introduction to algebraic geometry and algebraic groups}. Elsevier.

\bibitem{demazure2006lectures}
Demazure, M. (2006). \textit{Lectures on $p$-divisible groups} (Vol.~302). Springer.

\bibitem{fujiwara2018foundations}
Fujiwara, K., \& Kato, F. (2018). \textit{Foundations of rigid geometry I}. Zürich: EMS.

\bibitem{grothendieck1960elements}
Grothendieck, A. (1960). Éléments de géométrie algébrique: I. Le langage des schémas. \textit{Publ. Math. Inst. Hautes Études Sci.}, 4, 5--228.

\bibitem{hartshorne2013algebraic}
Hartshorne, R. (2013). \textit{Algebraic geometry} (Vol.~52). Springer Science \& Business Media.

\bibitem{heyneman1969affine}
Heyneman, R.~G., \& Sweedler, M.~E. (1969). Affine Hopf algebras, I. \textit{J. Algebra}, 13(2), 192--241.

\bibitem{heyneman1974reflexivity}
Heyneman, R.~G., \& Radford, D.~E. (1974). Reflexivity and coalgebras of finite type. \textit{J. Algebra}, 28(2), 215--246.

\bibitem{HOSHI202028}
Hoshi, M., Masuoka, A., \& Takahashi, Y. (2020). Hopf-algebraic techniques applied to super Lie groups over a complete field. \textit{J. Algebra}, 562, 28--93.

\bibitem{jantzen2003representations}
Jantzen, J.~C. (2003). \textit{Representations of algebraic groups} (Vol.~107). Amer. Math. Soc.

\bibitem{masuoka2011quotient}
Masuoka, A., \& Zubkov, A.~N. (2011). Quotient sheaves of algebraic supergroups are superschemes. \textit{J. Algebra}, 348(1), 135--170.

\bibitem{masuoka2020notion}
Masuoka, A., \& Zubkov, A.~N. (2020). On the notion of Krull super-dimension. \textit{J. Pure Appl. Algebra}, 224(5), 106245.

\bibitem{moosavian2019existence}
Moosavian, S.~F., \& Zhou, Y. (2019). On the existence of Heterotic-String and Type-II-Superstring field theory vertices. \textit{J. Geom. Phys.}, 205, 105307.  

\bibitem{sweedler1969hopf}
Sweedler, M.~E. (1969). \textit{Hopf algebras}. Math. Lecture Note Ser. W.~A. Benjamin.

\bibitem{takeuchi1974tangent}
Takeuchi, M. (1974). Tangent coalgebras and hyperalgebras I. \textit{Jpn. J. Math.}, 42, 1--143.

\bibitem{takeuchi1977formal}
Takeuchi, M. (1977). Formal schemes over fields. \textit{Commun. Algebra}, 5(14), 1483--1528.

\bibitem{takahashi2024quotients}
Takahashi, Y., \& Masuoka, A. (2024). Quotients in super-symmetry: formal supergroup case. \textit{Commun. Algebra}, 52(10), 4075–4092.

\bibitem{westra2009superrings}
Westra, D.~B. (2009). \textit{Superrings and supergroups}. (Doctoral dissertation). Universität Wien

\bibitem{zubkov2009affine}
Zubkov, A.~N. (2009). Affine quotients of supergroups. \textit{Transform. Groups}, 14(3), 713--745.

\bibitem{zubkov2022dimension}
Zubkov, A.~N., \& Kolesnikov, P.~S. (2022). On dimension theory of supermodules, super-rings, and superschemes. \textit{Commun. Algebra}, 50(12), 5387--5409.

\bibitem{zubkov2024automorphism}
Zubkov, A.~N. (2024). Automorphism group functors of algebraic superschemes. \textit{Math. Z.}, 308(1), 4.

\end{thebibliography}
\end{document}